\newtheorem{theorem}{Theorem}
\newtheorem{lemma}{Lemma}
\newtheorem{remark}{Remark}
\newtheorem{definition}{Definition}
\newtheorem{assumption}{Assumption}[]
\newcommand{\qedwhite}{\hfill \ensuremath{\Box}}
\newcommand\ddfrac[2]{\frac{\displaystyle #1}{\displaystyle #2}}
\renewcommand\paragraph{\@startsection{paragraph}{4}{\z@}%
            {-2.5ex\@plus -1ex \@minus -.25ex}%
            {1.25ex \@plus .25ex}%
            {\normalfont\normalsize\bfseries}}
\author{Atal Narayan Sahu
\thanks{Atal Narayan Sahu is with the Extreme Computing Research Center, Division of Computer, Electrical and Mathematical Sciences and Engineering (CEMSE) at King Abdullah University of Science and Technology, Thuwal, Saudi Arabia-23955-6900, 
e-mail: atal.sahu@kaust.edu.sa.}
\quad Aritra Dutta \thanks{Aritra Dutta is with the  Extreme Computing Research Center, Division of Computer, Electrical and Mathematical Sciences and Engineering (CEMSE) at King Abdullah University of Science and Technology, Thuwal, Saudi Arabia-23955-6900, 
e-mail: aritra.dutta@kaust.edu.sa. Corresponding author.}
\quad Aashutosh Tiwari 
\thanks{Aashutosh Tiwari is with the Department of Mathematics  at Indian Institute of Technology, Kanpur, email: ashut669@gmail.com}
    \quad  Peter Richt\'{a}rik
        \thanks{Peter Richt\'{a}rik is with the Visual Computing Center, Division of Computer, Electrical and Mathematical Sciences and Engineering (CEMSE) at King Abdullah University of Science and Technology, Saudi Arabia-23955-6900, and MIPT, Russia;
e-mail: peter.richtarik@kaust.edu.sa}
}
\newcommand{\eqdef}{:=}
\newcommand{\R}{\mathbb{R}}
\newcommand{\Exp}{\mathbb{E}}
\newcommand{\E}[1]{{\mathbb{E}\left[#1\right] }}    % expectation
\newcommand{\cD}{{\cal D}}
\newcommand{\cL}{{\cal L}}
\newcommand{\cO}{{\cal O}}
\newcommand{\cX}{{\cal X}}
\newcommand{\mA}{{\bf A}}
\newcommand{\mB}{{\bf B}}
\newcommand{\mH}{{\bf H}}
\newcommand{\mI}{{\bf I}}
\newcommand{\mM}{{\bf M}}
\newcommand{\mS}{{\bf S}}
\newcommand{\mU}{{\bf U}}
\newcommand{\mW}{{\bf W}}
\newcommand{\mZ}{{\bf Z}}
\begin{document}
\date{}
\title{On the Convergence Analysis of Asynchronous SGD for Solving Consistent Linear Systems}
\maketitle

\begin{abstract}
 In the realm of big data and machine learning, data-parallel, distributed stochastic algorithms have drawn significant attention in the present days.~While the synchronous versions of these algorithms are well understood in terms of their convergence, the convergence analyses of their asynchronous counterparts are not widely studied. In this paper, we propose and analyze a {\it distributed, asynchronous parallel} SGD in light of solving an arbitrary consistent linear system by reformulating the system into a stochastic optimization problem as studied by Richt\'{a}rik and Tak\'{a}\~{c} in \cite{richtarik2017stochastic}. We compare the convergence rates of our asynchronous SGD algorithm with the synchronous parallel algorithm proposed by Richt\'{a}rik and Tak\'{a}\v{c} in~\cite{richtarik2017stochastic} under different choices of the hyperparameters---the stepsize, the damping factor, the number of processors, and the delay factor. We show that our asynchronous parallel SGD algorithm also enjoys a global linear convergence rate, similar to the {\em basic} method and the synchronous parallel method in \cite{richtarik2017stochastic} for solving any arbitrary consistent linear system via stochastic reformulation. We also show that our asynchronous parallel SGD improves upon the {\em basic} method with a better convergence rate when the number of processors is larger than four. We further show that this asynchronous approach performs asymptotically better than its synchronous counterpart for certain linear systems. Moreover, for certain linear systems, we compute the minimum number of processors required for which our asynchronous parallel SGD is better, and find that this number can be as low as two for some ill-conditioned problems.
\end{abstract}

\textbf{Keywords:} Linear systems, distributed optimization, stochastic optimization, asynchronous communication, parallel algorithms, iterative methods.\\

\textbf{AMS Subject Classification.} 15A06, 15B52, 65F10, 65Y20, 68Q25, 68W20, 68W40, 90C20

\tableofcontents

\section{Introduction}\label{intro}
In the era of big data and artificial intelligence, optimization problems have become increasingly complex in nature. Although the computers are now more powerful and inexpensive, the problems have grown continuously larger in size and they are difficult to be maneuvered by a single processor. Owing to the nature of the high-volume of the data, an emerging interest is to device and analyze scalable, parallel, and distributed algorithms that can handle the data more efficiently as compared to the traditional optimization algorithms designed to run on a single processor.~To deal with the large-scale data, these new class of algorithms can take advantage of a multi-processor system where each processor has access to its own data-partition and it processes the data-partitions in {\em mini-batches}. %The idea of using mini-batches in stochastic and online learning is not new. 
Shalev-Shwartz et al. \cite{Shalev-Shwartz_pegasos} in 2007 and Gimpel et al. \cite{Gimpel} in 2010, explored the idea of mini-batches for stochastic algorithms in both the serial and parallel settings. In 2011, Dekel et al. \cite{Dekel} proposed a distributed mini-batch algorithm~(for online predictions)---a method that converts many serial gradient-based online prediction algorithms into distributed algorithms with an asymptotically optimal regret bound. However, in a distributed environment, the synchronous parallel algorithms tend to slow down due to unpredictable communication faults, significant network latency, and processors with different processing speeds. 

To overcome the above issues posed by the synchronous parallel algorithms in a distributed environment, there has been a recent focus on developing and analyzing \textit{asynchronous} algorithms. In asynchronous algorithms, processors with different storage capacity and processing speeds perform updates without synchronizing with others. %Eventually, in this set-up, the end goal is to reach a (global or feasible) solution to the optimization problem by distributing the data on different processing units and unifying their updates in an iterative procedure.
Asynchronous algorithms were first introduced by Chazan and Miranker on chaotic relaxation in 1969 \cite{chazan} (also, see Frommer and Szyld \cite{frommer} and \cite{bertsekas2003parallel}). However, not only the inherent dynamics of asynchronous algorithms are challenging compared to their synchronous counterparts, but also their convergence analyses are much more mathematically involved. Historically, in the literature, the comparisons between the convergence rates of the asynchronous algorithms and their synchronous counterparts are also not vastly present and less understood. In this paper, our goal is to understand the convergence rates of the asynchronous and synchronous SGD in a fairly simple set-up. However, before discussing problem formulation, set-up, and contribution, we start with a brief overview of stochastic optimization. % of solving any arbitrary consistent linear systems by reformulating them into stochastic optimization problems. 

\paragraph*{Stochastic optimization} In machine learning and data-fitting applications, stochastic optimization is a broadly studied field. Consider the {\it stochastic optimization problem}:
\begin{eqnarray}\label{stochasticoptimization_problem}
 \min_{x\in\R^n}f(x)=\min_{x\in\R^n}\Exp_{\mS\sim\cD}[f_{\mS}(x)],
\end{eqnarray}
where $\cD$ is an user inferred distribution and $\mS$ is a random sample drawn from that distribution.~In supervised machine learning or deep learning, the above problem is known as {\em empirical risk minimization}~(ERM) problem: 
\begin{eqnarray}\label{distributed_stochasticoptimization_problem}
 \min_{x\in\R^n}[f(x)=\sum_{i=1}^n\frac{1}{n}\underbrace{\Exp_{\mS_i\sim\cD_i}[f_{\mS_i}(x)}_{\eqdef f_i(x)}]],
\end{eqnarray} 
where $f_i(x)$'s are instantiated by different distributions $\cD_i$ and $\mS_i$ is sampled from $\cD_i$.
In the distributed setting, $n$ in the ERM problem denotes number of processors/workers. Therefore, \eqref {distributed_stochasticoptimization_problem} is an important problem from the deep learning perspective as it captures data-parallelism~(distributed over $n$ processors (GPUs/CPUs etc.)). One of the most popular algorithms for solving \eqref{stochasticoptimization_problem} is the stochastic gradient descent (SGD)~\cite{robins_monro}. %In practice, as it is inefficient or expensive to store all the instances, at every iterate, the user randomly accesses one instance of the stochastic function $f_{\mS_k}$ and its gradient $\nabla f_{\mS_k}$.
 For a given sequence of stepsize parameters $\{\omega_k\}$ with $\omega_k>0$ and the sequence of iterates $\{x_k\}$, the updates of SGD take the form:
\begin{eqnarray}\label{sgd}
 x_{k+1}=x_k-\omega_k \nabla f_{\mS_k}(x_k),
\end{eqnarray}
where $\nabla f_{\mS_k}(x_k)$ is the stochastic gradient arising from the sample $\mS_k\sim \cD$ drawn afresh in each iteration and is an unbiased estimator of the gradient of $f$. A natural next direction in solving \eqref{stochasticoptimization_problem} is to design a synchronized parallel update by using SGD \cite{parallel_SGD}. %We will define one of its variants in this scope. 
If more than one processors are available, then they can work simultaneously and each one of them can calculate a stochastic gradient independent of the other processors. At the end, the user can average the stochastic gradients from all processors to obtain the update as:
\begin{eqnarray}\label{parallel_sgd}
 x_{k+1}=x_k-\frac{\omega_k}{\tau}\sum_{i=1}^{\tau} \nabla f_{\mS_{ki}}(x_k),
\end{eqnarray}
where $\nabla f_{\mS_{ki}}(x_k)$ is the stochastic gradient arising from the independent sample $\mS_{ki}\sim \cD$ from each of the $\tau$ processors and $\omega_k>0$ is a uniform stepsize across $k^{\rm th}$ iteration. Note that, if $\tau=1$, then the update scheme in \eqref{parallel_sgd} is \eqref{sgd}. 
In this paper, we compare the convergence rates of the asynchronous and synchronous algorithms in a fairly simple set-up of solving any arbitrary consistent linear systems by reformulating them into stochastic optimization problems. Before explaining our main contributions, we introduce the {\em stochastic reformulation of a linear system}. 

\subsection{Stochastic reformulation of a linear system}\label{sec:stochastic_reformulation}
In \cite{richtarik2017stochastic}, Richt\'{a}rik and Tak\'{a}\v{c} reformulated any arbitrary consistent linear system into a stochastic optimization problem~(see details in \cite{richtarik2017stochastic, gower2015stochastic}; also see \cite{gower2015randomized} ). Consider a linear system:
\begin{eqnarray}\label{linear_system}
  \mA x=b,
\end{eqnarray}
where $\mA\in\R^{m\times n}$ and $\mA\neq 0$. Let the set of solutions $\cL\eqdef \{x : \mA x=b\}$ be non-empty. Simply put, we consider a consistent linear system that has a solution but the solution is not necessarily unique. Richt\'{a}rik and Tak\'{a}\v{c}, reformulated \eqref{linear_system} into different stochastic problems and showed that for any arbitrary consistent linear system, the stochastic reformulations of \eqref{linear_system} are exact. In other words, the set of the solutions of any of those equivalent stochastic formulations is exactly the same as $\cL$ --- which they formally defined as the {\it exactness} assumption (see Assumption \ref{assump1}) in Section \ref{sec:2}. 
To motivate further, we will now introduce some technicalities. For a symmetric positive definite matrix $\mB$, denote $\langle \cdot\;,\;\cdot \rangle_{\mB}$ as the $\mB$-inner product and let $\|x\|_{\mB}=\sqrt{x^\top\mB x}$ be the (semi)-norm induced by it. Therefore, by using the idea proposed in \cite{richtarik2017stochastic}, one can define a stochastic function $f_{\mS}(x)\eqdef\frac{1}{2}\|\mA x-b\|_{\mH}^2,$ where $\mH=\mS(\mS{^\top}\mA\mB^{-1}\mA^{\top}\mS)^{\dagger}\mS^{\top}$ is a random, symmetric, and positive definite matrix. Indeed minimizing \eqref{stochasticoptimization_problem} with $f_{\mS}(x)=\frac{1}{2}\|\mA x-b\|_{\mH}^2$ solves \eqref{linear_system} \footnote{In order to solve \eqref{linear_system} via minimizing \eqref{stochasticoptimization_problem}, one only needs local information of the stochastic function $f_{\mS}(x)$, for example, the stochastic gradient $\nabla f_{\mS}(x)$ without any explicit access to the function, its gradient, or the Hessian.}. At this end, Richt\'{a}rik and Tak\'{a}\v{c}, in \cite{richtarik2017stochastic} proposed a set of simple and easy-to-implement stochastic optimization algorithms----the {\em basic method}, the {\em parallel or minibatch method}, and an {\em accelerated method} to solve the stochastic reformulations of the linear system~\eqref{linear_system}. The {\em basic method} is the primary algorithm  whose iterative updates can be seen as SGD steps (as given in \eqref{sgd} \cite{robins_monro}) applied to solve the problem \eqref{stochasticoptimization_problem} with a fixed stepsize parameter.~Therefore, to solve \eqref{linear_system}, minimize the stochastic objective function $f_{\mS}(x)=\frac{1}{2}\|\mA x-b\|_{\mH}^2$, and the iterates of the {\em basic method} takes the following form:
 \begin{eqnarray}\label{basic_method}
 x_{k+1}=x_k-\omega \mB^{-1}\mA^\top \mS_{k} (\mS_{k}^\top \mA \mB^{-1} \mA^\top \mS_{k})^{\dagger} \mS_{k}^\top(\mA x_k-b),
\end{eqnarray}
where $\mS_k$ is sampled from the distribution $\cD$ in each iteration and $\omega>0$ is a fixed stepsize parameter. As mentioned before, the {\em parallel/minibatch method} is a natural extension of the {\em basic method} applied to a synchronized system of $\tau$ processors such that each one of them can perform an SGD step. At the end the user averages the total yield to perform the iterative update step. We mention the {\em parallel method} formally in Section \ref{parallel_method}. 

\subsection{Contribution}
In this paper, we solve \eqref{stochasticoptimization_problem} via a stochastic reformulation of the linear system \eqref{linear_system} in a distributed asynchronous set-up. We consider an ensemble with a central {\em master} server and say, $\tau$ independent {\em workers}, where the {\em master} obtains the gradients from the {\em workers} with a delay. Although, we follow the framework of the {\em basic method} proposed by Richt\'{a}rik and Tak\'{a}\v{c}  \cite{richtarik2017stochastic} to design our asynchronous SGD, our algorithm is closely related to Hogwild! of Recht et al. \cite{recht2011hogwild} and inspired by the delayed proximal gradient algorithm of Feyzmahdavian et al. \cite{feyzmahdavian2014delayed}. In a shared-memory model with $\tau$ independent {\em workers}, our iterative scheme updates the vector $x$ that is accessible to all workers. Each {\em worker} can contribute an update to the vector $x$, although, they can be of different processing speeds. Therefore, whenever a {\it worker} computes a stochastic gradient at $x$, it performs a SGD step at that point and communicates the update to the {\it master} processor. The {\em master} eventually experiences a delay and updates the final iterate by using a convex combination of its current instance of the vector $x$ and the SGD update that was communicated with a delay. We explain this process formally in Section \ref{asyn_sgd}.~The following are our main contributions in this paper:
\begin{itemize}
    \item Inspired by Richt\'{a}rik and Tak\'{a}\v{c}'s parallel basic method in \cite{richtarik2017stochastic}, we design an asynchronous parallel SGD to solve a consistent linear system. See Algorithm \ref{alg_2} in Section \ref{asyn_sgd}. 
    \item We propose a detailed convergence analysis of our algorithm in Section \ref{conv}. We compare the convergence rates of our asynchronous algorithm with the basic method and the synchronous parallel algorithm proposed by Richt\'{a}rik and Tak\'{a}\v{c} in  \cite{richtarik2017stochastic}. At this end, we consider different choices of the stepsize parameter~($\omega$), the damping factor~($\theta$), number of processors~($\tau$), and the delay factor~($\delta$) to analyze our results. We refer the readers to Section \ref{asyn_sgd} for details about this parameters. Moreover, our convergence analysis does not require any stronger assumption such as sparsity as in \cite{mania2017perturbed} and Leblond et al. \cite{asaga}. We refer the readers to Table \ref{table1}  for a quick overview of these results. %For a detailed analysis, see Section \ref{conv}. 
    \item We compare the iteration complexity of the asynchronous and synchronous method.  We find that asymptotically as the number of processors, $\tau$, approaches to $\infty$, asynchronous approach has a better iteration complexity than its synchronous counterpart for certain linear systems. Moreover, for such linear systems, we also compute the minimum number of processors such that the asynchronous method has a better iteration complexity than the synchronous method, and find that this number can be as low as $2$ in some cases, even for highly ill-conditioned problems. We show them in Table \ref{table:case1} and Table \ref{table:case2}.
\end{itemize}

\subsection{Centralized algorithms---Related work}
As the digital data and computing power of the processors are increasing, in the past decade, there has been a strong focus on developing the parallel versions of stochastic algorithms. Based on the communication protocol, these parallel algorithms can be broadly classified as centralized~(that follows a {\it master-worker architecture}) and decentralized (for example, {\tt Allreduce} communication strategy, see \cite{layer-wise, framework}). In this paper we will follow the centralized set-up, where a {\em master} node coordinates with all the {\em worker} nodes. 
Depending on the update rule, the {\em centralized} algorithms can be further categorized into two categories---synchronous and asynchronous. In this scope, for completeness, we will quote a few representatives of each of those categories.~While Zinkevich et al.\ \cite{parallel_SGD} proposed and analyzed synchronous parallel SGD, Richt\'{a}rik and Tak\'{a}\v{c} in \cite{Richtarik2016} showed by parallelizing, randomized block coordinate descent methods can be accelerated. Yang \cite{NIPS2013_5114} proposed a distributed stochastic dual coordinate ascent algorithm in a {\em star-shaped distributed network}, and analyzed the trade-off between computation and communication.~In a similar sprit, Jaggi et al. \cite{NIPS2014_5599} proposed Communication-efficient distributed dual Coordinate Ascent or COCOA that uses an arbitrary dual optimization method on the local data on each computing node in parallel and reduces communication~(we refer to the references in \cite{NIPS2014_5599, takac2013mini} for distributed primal-dual methods; additionally, see \cite{necoara2013efficient} for application to distributed MPC).~Fercoq and Richt\'{a}rik in \cite{fercoq2015accelerated} proposed APPROX or Accelerated Parallel PROXimal method---a unison of three ideas, that is, acceleration, parallelization, and proximal method. In \cite{richtarik2016distributed}, Richt\'{a}rik and Tak\'{a}\v{c} proposed and analyzed a {\em hybrid} coordinate descent method known as HYDRA that partitions the coordinates over the nodes, independently from the other nodes, and applies updates to the selected coordinates~in parallel~(we also refer to \cite{Bradley2011ParallelCD, marevcek2015distributed} and HYDRA-2 \cite{fercoq2014fast} for more insights). In a similar line of work, Shamir et al. \cite{DANE} proposed a distributed approximate Newton-type method or DANE. Synchronous stochastic algorithms are well explored regarding their convergence rates, acceleration, and parallelization \cite{Shalev-Shwartz_SDCA, johnson_zhang}. However, they may suffer from the {\em memory locking}, that is, the processors or the computing nodes need to wait for the update from the slowest node. Hogwild! by Recht et al. \cite{recht2011hogwild} is one of the prime example of asynchronous stochastic algorithms and first one of its kind which do not use the {\em memory locking} protocol and as a result, the computing nodes can modify the parameters at the same time.~De Sa et al. \cite{de2015taming} proposed Buckwild! which is a low-precision asynchronous SGD. Additionally, they analyzed Hogwild! type algorithms with relaxed assumptions and analyzed asynchronous SGD algorithms for (non-convex) matrix-completion type problems (also see \cite{nguyen2018sgd}). Chaturapruek et al. \cite{chaturapruek2015asynchronous} showed that for convex problems, under similar conditions as regular SGD, asynchronous SGD achieves similar asymptotic convergence rate. However, as in \cite{chaturapruek2015asynchronous} the perturbed iterate analysis of Mania et al.\ \cite{mania2017perturbed} and Leblond et al. \cite{asaga} for proving the convergence of asynchronous SGD use rigorous sparsity assumption. 
Noel et al. \cite{noel2014dogwild} proposed Dogwild! that is distributed hogwild for CPU and GPU. In the advent of the deep neural networks, asynchronous parallel SGD type algorithms are highly deployed in practice. %The asynchronous framework proposed by Li et al. \cite{li_nips13}  manages data communications between clients and servers. 
Recently, Lian et al.~\cite{Lian} %studied two asynchronous parallel algorithms---one is over a computer network and the other one is on a shared memory system. They claimed 
showed that in their setting, proposed asynchronous parallel algorithms can achieve a linear speedup if the number of workers are bounded by the square root of the total number of iterations. In 2017, Zheng et al.~\cite{Zheng} proposed an algorithm called delay compensated asynchronous SGD (DC-ASGD) for training deep neural networks and to compensate the {\em delayed} gradient update by the local workers to the global model.~With experimental validity on deep neural networks, Zheng et al. claimed that their DC-ASGD outperforms both synchronous SGD and asynchronous SGD, and nearly approaches the performance of sequential SGD. Among the others, asynchronous algorithms by Aytekin et. al \cite{feyzmahdavian}, distributed SDCA by Ma et. al \cite{takac_icml}, distributed SVRG by Lee et. al, \cite{lee-svrg} and Zhao and Li \cite{li_svrg}, asynchronous parallel SAGA by Leblond et al. \cite{asaga}, proximal asynchronous SAGA by Pedregosa et al. \cite{pasaga}, are to name a few.~We refer the readers to \cite{de2017understanding,ben2019demystifying} for an in-depth understanding.

\paragraph*{Notation}
We provide a table of the most frequently used notation in this paper for convenience (See Appendix \ref{sec:notation}). 
Here we include some basic notations. We write the matrices
in bold uppercase letters and denote vectors and scalars by simple lowercase letters. We define the range space and null space of a matrix $\mA\in\R^{m\times n}$ as ${\rm Im}(\mA)\eqdef\{y\in\R^m : y=\mA x\}$ and $N(\mA)\eqdef\{x\in\R^n : \mA x=0\}$, respectively. We further define the Euclidean inner product as $\langle \cdot,\cdot \rangle$ and for a symmetric positive definite matrix $\mB$, we denote $\langle \cdot,\cdot \rangle_{\mB}$ as the $\mB$-inner product and define $\|x\|_{\mB}=\sqrt{x^\top\mB x}$ as the (semi)-norm induced by it. 

\paragraph*{Organization}
The paper is organized as follows. In Section 2, we review some key results related to the stochastic reformulation of linear systems and describe the synchronous {\em parallel method} by Richt\'{a}rik and Tak\'{a}\v{c}, in \cite{richtarik2017stochastic}. Next in Section 3, we present our asynchronous parallel SGD. We compare the convergence rates of the asynchronous SGD with the synchronous {\em parallel method} in Section \ref{conv}.

\begin{table}
\centering
\begin{tabular}{|c |c |c |c |c |c |c |c |}     %{l l l l l l}
 \hline
%\hline
Algorithm &Quantity & Case & $\omega$  & $\theta$ & $\tau$ & Complexity & Reference\\
 \hline
& &    & $1$    & $\theta_1$ &   $\tau$ & $\tfrac{\xi_a(1,\tau)}{\lambda_{\min}^+}$ & \\
APSGD &$\mathbb{E}[\|x_t-x_\star\|_{\mB}^2]$ & $\omega^\star\leq2 $& $1$    & $\theta_1$ &   $\infty$ & $\tfrac{3}{4\lambda^+_{\rm min}}$ & This paper\\
& && $\omega^\star$    & $\theta_{\omega^\star}$ &   $\tau$ & $\tfrac{\xi_a(\omega^\star,\tau)}{\lambda_{\min}^+}$&\\
 & && $\omega^\star$    & $\theta_{\omega^\star}$ &   $\infty$ & $\tfrac{3\lambda_{\min}^++\lambda_{\rm max}}{4\lambda_{\min}^+}$ &\\
 \hline
 && & $1$    & $\theta_1$ &   $\tau$ & $\tfrac{\xi_a(1,\tau)}{\lambda_{\min}^+}$ & \\
APSGD & $\mathbb{E}[\|x_t-x_\star\|_{\mB}^2]$ &$\omega^\star\geq2 $   & $1$    & $\theta_1$ &   $\infty$ & $\tfrac{3}{4\lambda^+_{\rm min}}$ &This paper\\
 &&& $2$    & $\theta_2$ &   $\tau$ &  $\tfrac{\xi_a(2,\tau)}{\lambda_{\min}^+}$&\\
 &&& $2$    & $\theta_2$ &   $\infty$ & $\tfrac{1+2\lambda^+_{\rm min}}{4\lambda^+_{\rm min}}$ &\\
 \hline
&&   & $1$    & - &   $\tau$ & $\tfrac{1}{(2-\xi_s(\tau))\lambda_{\rm min}^+}$ & \cite{richtarik2017stochastic} \\
Parallel SGD &$\mathbb{E}[\|x_t-x_\star\|_{\mB}^2]$ &$\omega\in(0,2/\xi_s(\tau))$ & $1/\xi_{\tau}$& -&$\tau$ & $\tfrac{\xi_S(\tau)}{\lambda_{\min}^+}$ &\cite{richtarik2017stochastic}\\
&&& $1/\lambda_{\rm max}$& -&$\infty$ &  $\tfrac{\lambda_{\rm max}}{\lambda_{\rm min}^+}$& \cite{richtarik2017stochastic}\\
\hline
\end{tabular}
\caption{\small{Iteration complexities of asynchronous parallel SGD~(APSGD) and parallel SGD or parallel basic method. For asynchronous SGD, we define, $\xi_a(1,\tau)  \eqdef\frac{3}{4}+\frac{1+\sqrt{1+2c\tau(1-\lambda_{\min}^+)}}{4c\tau}$, $\xi_a(\omega^\star,\tau)\eqdef \frac{3\lambda_{\min}^++\lambda_{\rm max}}{4}+\frac{\sqrt{1+c\tau(2-k)}+2(c\tau+1+c\tau(1-k)\lambda_{\min}^+)}{2c\tau\sqrt{1+c\tau(2-k)}}$ and $\xi_a(2,\tau) \eqdef\frac{1}{4}+\frac{\lambda_{\min}^+}{2}+\frac{1+\sqrt{c\tau+1}}{2c\tau}$, where $k=\lambda_{\min}^+ +\lambda_{\rm max}$ and $c\ge 1$. We also define, $\theta_1 \eqdef\frac{\sqrt{1+2c\tau(1-\lambda_{\min}^+)}-1}{c\tau(1-\lambda_{\min}^+)} $, $\theta_2\eqdef \tfrac{\sqrt{c\tau+1}-1}{c\tau}$, and $\theta_{\omega^\star} \eqdef \tfrac{k(\sqrt{1+c\tau(2-k)}-1)}{c\tau(2-k)}$.~For parallel SGD, we define $\xi_S(\tau)\eqdef \frac{1}{\tau}+\left(1-\frac{1}{\tau}\right)\lambda_{\rm max}$.}}\label{table1}
\end{table}

\section{Stochastic reformulation of a linear system: A few key results}\label{sec:2}
Based on Section \ref{sec:stochastic_reformulation}, we are now set to quote some results without their proofs which follow directly from \cite{richtarik2017stochastic, gower2015stochastic} and are used to establish our main results. %These results are obtained from the stochastic reformulation of the linear system (as explained in the introduction) and contribute in our analysis of asynchronous parallel method. 
Let $${\mZ = \mZ_\mS \eqdef \mA^\top \mS ( \mS^\top \mA \mB^{-1} \mA^\top \mS)^{\dagger} \mS^\top \mA}$$
and $\Exp[\mZ] \eqdef \Exp_{\mS\sim \cD}[\mZ]$ be such that $\cD$ is a user-defined distribution and $\mS$ be a random matrix drawn from $\cD$. Let $\mB$ be a $n\times n$ symmetric positive definite matrix. Define
$$ {\mW\eqdef\mB^{-\frac{1}{2}}\Exp[{\mZ}]\mB^{-\frac{1}{2}}}$$
and let $\mW=\mU\Lambda\mU^\top$ be a eigenvalue decomposition of $\mW$, where $\mU^\top\mU=\mU\mU^\top=\mI$ and $\Lambda$ is a diagonal matrix with eigenvalues $0\le\lambda_i\le1$ arranged in a non-increasing order. Additionally, we note that $\lambda_{\rm max}=\lambda_1$ is the largest and $\lambda_{\rm min}^+$ is the smallest non-zero eigenvalue of $\mW.$ We start by defining the {\em exactness} assumption. 

\begin{assumption}\cite{richtarik2017stochastic}\label{assump1}
Let $\cX={\arg\min_{x\in\R^n} f(x)=\{x\;:f(x)=0\}=\{x\;:\nabla f(x)=0\}}.$ Then $\cX=\cL.$
\end{assumption}
By $x_\star=\Pi^{\mB}_{\cL}(x_0)$ we denote $x_\star$ to be the projection of the initial iterate $x_0$ onto the set $\cL$ in $\mB$-norm and quote the following results. 
\begin{remark}\label{gradient_hess}
For $x_\star\in\cL$, the gradient and Hessian of $f$ are $\nabla f(x)=\mB^{-1}\E{\mZ}(x-x_\star)$ and $\nabla^2 f(x)=\mB^{-1}\E{\mZ},$ respectively.
\end{remark}
\begin{lemma}[Lemma 4.7 in \cite{richtarik2017stochastic}] \label{eq:09s9hsoiuis907} For all $x\in \R^n$, $x_\star\in \cL$ and for a given $\mS$ we have
\begin{equation}   \|x - x_\star - \omega \nabla f_{\mS}(x)\|_{\mB}^2 = \|(\mI  - \omega \mB^{-1} \mZ)(x-x_\star) \|_{\mB}^2 = \|x-x_\star\|_{\mB}^2 - 2 \omega(2-\omega)f_{\mS}(x) .\end{equation}
\end{lemma}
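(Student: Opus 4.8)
The plan is to establish the two equalities in turn, the first by direct substitution and the second by expanding the $\mB$-norm and applying a Moore--Penrose identity. First I would record the explicit form of the stochastic gradient. Differentiating $f_{\mS}(x)=\tfrac12\|\mA x-b\|_{\mH}^2$ in the $\mB$-inner product and using $\mA^\top\mH\mA=\mZ$ together with $\mA x_\star=b$ (so that $\mA x-b=\mA(x-x_\star)$), one obtains $\nabla f_{\mS}(x)=\mB^{-1}\mZ(x-x_\star)$, which is the stochastic analogue of the gradient in Remark~\ref{gradient_hess}. Substituting this into $x-x_\star-\omega\nabla f_{\mS}(x)$ and factoring out $(x-x_\star)$ on the right immediately gives $(\mI-\omega\mB^{-1}\mZ)(x-x_\star)$, which is the first claimed equality.

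For the second equality, writing $r\eqdef x-x_\star$, I would expand
$$\|(\mI-\omega\mB^{-1}\mZ)r\|_{\mB}^2=r^\top(\mI-\omega\mB^{-1}\mZ)^\top\mB(\mI-\omega\mB^{-1}\mZ)r.$$
Since $\mZ=\mA^\top\mS(\mS^\top\mA\mB^{-1}\mA^\top\mS)^{\dagger}\mS^\top\mA$ is symmetric, we have $(\mB^{-1}\mZ)^\top=\mZ\mB^{-1}$, and the middle matrix collapses to $\mB-2\omega\mZ+\omega^2\mZ\mB^{-1}\mZ$, yielding $\|r\|_{\mB}^2-2\omega\,r^\top\mZ r+\omega^2\,r^\top\mZ\mB^{-1}\mZ r$. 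The key step is an idempotence property of $\mZ$: setting $\mM\eqdef\mS^\top\mA\mB^{-1}\mA^\top\mS$, one computes $\mZ\mB^{-1}\mZ=\mA^\top\mS\,\mM^{\dagger}\mM\mM^{\dagger}\,\mS^\top\mA=\mA^\top\mS\mM^{\dagger}\mS^\top\mA=\mZ$, where I invoke the Moore--Penrose identity $\mM^{\dagger}\mM\mM^{\dagger}=\mM^{\dagger}$. This merges the quadratic and linear $\mZ$-terms, leaving $\|r\|_{\mB}^2-(2\omega-\omega^2)\,r^\top\mZ r$.

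Finally I would identify the quadratic form with the objective. Using $\mA(x-x_\star)=\mA x-b$ and $\mZ=\mA^\top\mH\mA$, we get $r^\top\mZ r=(\mA x-b)^\top\mH(\mA x-b)=\|\mA x-b\|_{\mH}^2=2f_{\mS}(x)$, and substituting gives exactly $\|r\|_{\mB}^2-2\omega(2-\omega)f_{\mS}(x)$. I expect the only genuinely nonroutine point to be the identity $\mZ\mB^{-1}\mZ=\mZ$; once the symmetry of $\mZ$ and the pseudoinverse property $\mM^{\dagger}\mM\mM^{\dagger}=\mM^{\dagger}$ are in hand, the remainder is straightforward bookkeeping.
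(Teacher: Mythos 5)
Your proof is correct and follows essentially the same route as the source: this paper does not prove the lemma itself but quotes it as Lemma 4.7 of \cite{richtarik2017stochastic}, and the argument there rests on exactly the two facts you isolate, namely that $\mB^{-1}\mZ$ is a $\mB$-self-adjoint idempotent (your identity $\mZ\mB^{-1}\mZ=\mZ$, which follows from $\mM^{\dagger}\mM\mM^{\dagger}=\mM^{\dagger}$ with $\mM\eqdef\mS^\top\mA\mB^{-1}\mA^\top\mS$, together with the symmetry of $\mM^{\dagger}$) and that $(x-x_\star)^\top\mZ(x-x_\star)=2f_{\mS}(x)$ whenever $\mA x_\star=b$. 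Your derivation of $\nabla f_{\mS}(x)=\mB^{-1}\mZ(x-x_\star)$ and the subsequent expansion of the $\mB$-norm are sound, so the proposal stands as a complete, self-contained proof of the quoted result.
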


\begin{lemma}[Lemma~4.2 in \cite{richtarik2017stochastic}] \label{lem:osohhd9u93}
For all $x \in \R^n$ and  $x_\star= \Pi_{\cL}^\mB(x)$ we have $\frac{\lambda^+_{\min}}{2}\|x-x_\star\|_\mB^2\leq f(x)\leq\frac{\lambda_{\rm max}}{2}\|x-x_\star\|_\mB^2.$
\end{lemma}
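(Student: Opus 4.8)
The plan is to exploit the fact that, under the exactness assumption, $f$ is a genuine nonnegative quadratic form centered at $\cL$, so that both inequalities reduce to a Rayleigh-quotient estimate for $\mW$. First I would fix $x_\star\in\cL$ and note that, since $\mA x_\star=b$, we have $\mA x-b=\mA(x-x_\star)$; substituting this into $f_{\mS}(x)=\tfrac12\|\mA x-b\|_{\mH}^2$ and taking expectations gives
\[
f(x)=\tfrac12(x-x_\star)^\top\E{\mZ}(x-x_\star),
\]
where $\mZ=\mA^\top\mS(\mS^\top\mA\mB^{-1}\mA^\top\mS)^{\dagger}\mS^\top\mA$. (Equivalently this follows from Remark~\ref{gradient_hess}: the $\mB$-Hessian $\mB^{-1}\E{\mZ}$ is constant and, by exactness, $f(x_\star)=0$ and $\nabla f(x_\star)=0$, so the second-order Taylor expansion about $x_\star$ is exact.) With the change of variables $y\eqdef\mB^{\frac12}(x-x_\star)$ this becomes $2f(x)=y^\top\mW y$ while $\|x-x_\star\|_{\mB}^2=\|y\|^2$, so the claim is equivalent to $\lambda_{\min}^+\|y\|^2\le y^\top\mW y\le\lambda_{\rm max}\|y\|^2$.

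The upper bound is then immediate: writing $\mW=\mU\Lambda\mU^\top$ and $z\eqdef\mU^\top y$, we have $y^\top\mW y=\sum_i\lambda_i z_i^2\le\lambda_{\rm max}\|z\|^2=\lambda_{\rm max}\|y\|^2$, since $\mU$ is orthogonal and $\lambda_{\rm max}$ is the largest eigenvalue. The lower bound cannot be obtained the same way, because the smallest eigenvalue of $\mW$ may be $0$ and the weak Rayleigh bound would only give $y^\top\mW y\ge 0$. The crux is to show that $y$ has no component along the kernel of $\mW$, i.e. $y\in{\rm Im}(\mW)$, so that the sum above runs only over the nonzero eigenvalues and hence $y^\top\mW y\ge\lambda_{\min}^+\sum_{i:\lambda_i>0}z_i^2=\lambda_{\min}^+\|y\|^2$.

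To establish $y\in{\rm Im}(\mW)$ I would use that $x_\star$ is the $\mB$-projection of $x$ onto $\cL$. The optimality condition for $\Pi_{\cL}^{\mB}$ is that $x-x_\star$ is $\mB$-orthogonal to the direction space $N(\mA)$ of the affine set $\cL=x_\star+N(\mA)$, i.e. $\mB(x-x_\star)\in N(\mA)^\perp={\rm Im}(\mA^\top)$. Next, since $\mW$ is symmetric, a short computation using the invertibility of $\mB^{\frac12}$ shows $N(\mW)=\mB^{\frac12}N(\E{\mZ})$ and hence ${\rm Im}(\mW)=N(\mW)^\perp=\mB^{-\frac12}{\rm Im}(\E{\mZ})$; thus $y\in{\rm Im}(\mW)$ is equivalent to $\mB(x-x_\star)\in{\rm Im}(\E{\mZ})$. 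Finally, because $\E{\mZ}=\mA^\top\E{\mH}\mA$ is symmetric positive semidefinite, $f(x)=0$ is equivalent to $x-x_\star\in N(\E{\mZ})$, so exactness (Assumption~\ref{assump1}) forces $N(\E{\mZ})=N(\mA)$ and therefore ${\rm Im}(\E{\mZ})={\rm Im}(\mA^\top)$; combined with the projection identity this gives $\mB(x-x_\star)\in{\rm Im}(\E{\mZ})$, completing the argument.

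I expect the main obstacle to be the lower bound, and specifically the bookkeeping that certifies $y\in{\rm Im}(\mW)$: one must (i) translate the variational characterization of the $\mB$-projection into the inclusion $\mB(x-x_\star)\in{\rm Im}(\mA^\top)$, and (ii) invoke exactness to equate $N(\E{\mZ})$ with $N(\mA)$, so that the two relevant range spaces coincide. Everything else is a routine orthogonal-eigenbasis estimate, and the upper bound needs neither the projection nor exactness.
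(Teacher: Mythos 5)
Your proof is correct. Note first that this paper does not prove the lemma at all: it is quoted verbatim (as Lemma~4.2) from \cite{richtarik2017stochastic}, so the only meaningful comparison is against the source's argument and the toolkit this paper imports from it. Your route is essentially that argument, with one ingredient re-derived rather than cited. The reduction $f(x)=\tfrac12(x-x_\star)^\top\E{\mZ}(x-x_\star)$, the substitution $y=\mB^{1/2}(x-x_\star)$, and the Rayleigh-quotient bound $y^\top \mW y\le \lambda_{\max}\|y\|^2$ are exactly as in the reference. For the lower bound, the reference invokes its Lemma~4.5 --- quoted in this paper as Lemma~\ref{lem:hbs6763vs6}: if $\lambda_i=0$ then $u_i^\top\mB^{1/2}(x-x_\star)=0$ --- which is precisely your statement $y\in{\rm Im}(\mW)$. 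You instead prove this inclusion from first principles: the $\mB$-projection optimality condition gives $\mB(x-x_\star)\in N(\mA)^{\perp}={\rm Im}(\mA^\top)$, and exactness (Assumption~\ref{assump1}) forces $N(\E{\mZ})=N(\mA)$, hence ${\rm Im}(\E{\mZ})={\rm Im}(\mA^\top)$ and $y\in\mB^{-1/2}{\rm Im}(\E{\mZ})={\rm Im}(\mW)$. All of these steps check out (in particular, $N(\mA)\subseteq N(\E{\mZ})$ always holds, and exactness is exactly the reverse inclusion, so your use of Assumption~\ref{assump1} is both correct and genuinely necessary for the lower bound). What your version buys is self-containment --- the lemma and its kernel-orthogonality ingredient are established in one pass --- at the cost of redoing work the paper already has available as Lemma~\ref{lem:hbs6763vs6}; had you simply cited that lemma, the proof would collapse to the quadratic representation plus the two Rayleigh estimates.
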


\begin{lemma}[Lemma~4.5 in \cite{richtarik2017stochastic}] \label{lem:hbs6763vs6} %Assume exactness. \peter{Exactness was not defined in these notes.}
Consider any $x\in \R^n$ and $x_\star = \Pi^{\mB}_{\cL}(x)$. If $\lambda_i = 0$ then we have $u_i^\top \mB^{1/2} (x-x_\star)=0$.
\end{lemma}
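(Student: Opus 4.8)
The plan is to translate the spectral condition $\lambda_i=0$ into a statement about null spaces, use the exactness Assumption~\ref{assump1} to identify the relevant null space with $N(\mA)$, and then invoke the defining orthogonality of the $\mB$-projection $x_\star=\Pi_{\cL}^{\mB}(x)$. First I would note that $\lambda_i=0$ means $u_i$ is a null vector of $\mW=\mB^{-1/2}\Exp[\mZ]\mB^{-1/2}$. Since $\mB^{-1/2}$ is invertible, $\mW u_i=0$ is equivalent to $\Exp[\mZ]\mB^{-1/2}u_i=0$; writing $v_i\eqdef\mB^{-1/2}u_i$, this says $v_i\in N(\Exp[\mZ])$.

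The key step is to show $N(\Exp[\mZ])=N(\mA)$, so that $v_i\in N(\mA)$. The inclusion $N(\mA)\subseteq N(\Exp[\mZ])$ is immediate from the factored form $\mZ=\mA^\top\mS(\mS^\top\mA\mB^{-1}\mA^\top\mS)^{\dagger}\mS^\top\mA$: any $v$ with $\mA v=0$ annihilates $\mZ$ for every sample $\mS$, hence annihilates $\Exp[\mZ]$. The reverse inclusion is exactly where exactness enters. Using $\nabla f(x)=\mB^{-1}\Exp[\mZ](x-x_\star)$ from Remark~\ref{gradient_hess} (valid for any fixed $x_\star\in\cL$), we have $\cX=\{x:\Exp[\mZ](x-x_\star)=0\}=x_\star+N(\Exp[\mZ])$, whereas $\cL=x_\star+N(\mA)$; the identity $\cX=\cL$ of Assumption~\ref{assump1} then forces $N(\Exp[\mZ])=N(\mA)$, giving $v_i\in N(\mA)$.

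Finally I would use the variational characterization of the projection: $x_\star=\Pi_{\cL}^{\mB}(x)$ is equivalent to $x-x_\star$ being $\mB$-orthogonal to the linear space $\cL-x_\star=N(\mA)$. Since $v_i\in N(\mA)$, this yields $\langle x-x_\star,\;v_i\rangle_{\mB}=0$, and unfolding the $\mB$-inner product gives $(x-x_\star)^\top\mB\,\mB^{-1/2}u_i=(x-x_\star)^\top\mB^{1/2}u_i=u_i^\top\mB^{1/2}(x-x_\star)=0$, where the last equality uses the symmetry of $\mB^{1/2}$. This is the claim.

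I expect the middle step---pinning down $N(\Exp[\mZ])=N(\mA)$---to be the only real obstacle, since it is false in general (it fails when the sampling distribution $\cD$ does not ``cover'' ${\rm Im}(\mA^\top)$) and is precisely the role played by exactness. One caveat I would flag: it is tempting to try to derive the claim from the norm equivalence in Lemma~\ref{lem:osohhd9u93}, but this is circular, because the lower bound $\tfrac{\lambda_{\min}^+}{2}\|x-x_\star\|_{\mB}^2\le f(x)$ there would itself be violated if $x-x_\star$ had any component along a zero-eigenvalue direction of $\mW$; hence the present lemma must be established first, by the null-space argument above.
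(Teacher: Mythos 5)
Your proof is correct. The paper itself does not prove this lemma at all---it is quoted without proof from \cite{richtarik2017stochastic}---and your argument (exactness forces $N(\Exp[\mZ])=N(\mA)$, and the $\mB$-projection makes $x-x_\star$ $\mB$-orthogonal to $N(\mA)$, which unfolds to $u_i^\top \mB^{1/2}(x-x_\star)=0$) is essentially the same as the one in that source, merely phrased through null spaces rather than the equivalent range-space statement $\mB^{1/2}(x-x_\star)\in {\rm Im}(\mW)$; your caveat that deriving the claim from Lemma~\ref{lem:osohhd9u93} would be circular is also well taken.
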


\subsection{Parallel method}\label{parallel_method}
In this section, we describe the {\it parallel basic method} proposed by Richt\'{a}rik and Tak\'{a}\v{c} in \cite{richtarik2017stochastic}. This is also referred to as the {\em minibatch} method. Let there be $\tau$ processors working independently. Starting from a given iterate $x_k$, the {\em parallel} or {\em minibatch method} performs one step of the {\em basic method} independently on each of the $\tau$ processors and finally averages the results. This leads to the update rule of {\em parallel basic method} (see \cite{richtarik2017stochastic}) as follows:
\begin{equation}\label{basic_method_parallel} 
 x_{k+1}=\frac{1}{\tau} \sum_{i=1}^\tau z_{k+1,i},
\end{equation}
where 
$z_{k+1,i}=x_k-\omega \mB^{-1}\mA^\top \mS_{ki} (\mS_{ki}^\top \mA \mB^{-1} \mA^\top \mS_{ki})^{\dagger} \mS_{ki}^\top(\mA x_k-b). $
Note that, in each iterate an independent sample $\mS_{ki}\sim \cD$ drawn afresh for each of the $\tau$ processors and $\omega>0$ is a fixed stepsize parameter. One can see the {\em parallel basic method} as a {\em synchronous parallel method}. %The $\tau$ processors in this ensemble system are coordinated and they wait until the slowest processor finishes its update. The last update, that is the $k+1^{\rm th}$ iterate, is obtained by averaging the updates of all $\tau$ processors in \eqref{basic_method_parallel}. 
In fact, the {\em parallel basic method} enjoys a linear convergence rate under the {\em exactness} assumption. %Richt\'{a}rik and Tak\'{a}\v{c} proposed the following convergence result for the parallel method. 
\begin{theorem}[Convergence of parallel basic method~\cite{richtarik2017stochastic}]\label{minibatch}
Let the exactness assumption hold and $x_\star= \Pi_{\cL}^\mB(x_0)$. Let $\{x_k\}_{k\ge 0}$ be the sequence of random iterates produced by the parallel method (see \eqref{basic_method_parallel}) where the stepsize $\omega\in(0,2/\xi_s(\tau))$, such that $\xi_s(\tau)=\frac{1}{\tau}+(1-\frac{1}{\tau})\lambda_{\rm max}.$ Then 
\begin{eqnarray}\label{parallel_conv}
\Exp[\|x_{k+1}-x_\star\|_{\mB}^2] \leq \rho_{s}(\omega,\tau)^k\frac{\lambda_{\rm max}}{2}\|x_{0}-x_\star\|_{\mB}^2,
\end{eqnarray}
where
\begin{eqnarray}\label{parallel_rho}
\rho_{s}(\omega,\tau)=1-\omega(2-\omega\xi_s(\tau))\lambda_{\rm min}^+.
\end{eqnarray}
\end{theorem}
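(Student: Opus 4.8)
The plan is to reduce the parallel update to a sum of $\tau$ independent basic-method steps, expand the squared $\mB$-norm of the error, and take a conditional expectation in which independence across the $\tau$ processors cleanly separates a second-moment (diagonal) part from a mean (cross) part. Writing $\nabla f_{\mS}(x)=\mB^{-1}\mZ_{\mS}(x-x_\star)$ — which follows from $\mA x_\star=b$ together with Remark~\ref{gradient_hess} — each summand becomes $z_{k+1,i}=x_k-\omega\,\mB^{-1}\mZ_{\mS_{ki}}(x_k-x_\star)$, so that
\[
 x_{k+1}-x_\star=(x_k-x_\star)-\frac{\omega}{\tau}\sum_{i=1}^{\tau}\mB^{-1}\mZ_{\mS_{ki}}(x_k-x_\star).
\]
First I would expand $\|x_{k+1}-x_\star\|_{\mB}^2$ into a linear term and a quadratic term in the $\tau$ stochastic gradients.

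For the quadratic part I would take $\Exp[\,\cdot\mid x_k]$ and split $\big\|\sum_i\mB^{-1}\mZ_{\mS_{ki}}(x_k-x_\star)\big\|_{\mB}^2$ into the $\tau$ diagonal terms and the $\tau(\tau-1)$ off-diagonal terms. The two key facts are: (i) the pseudoinverse identity $\mZ_{\mS}\mB^{-1}\mZ_{\mS}=\mZ_{\mS}$, which collapses each diagonal term to $(x_k-x_\star)^\top\mZ_{\mS_{ki}}(x_k-x_\star)=2f_{\mS_{ki}}(x_k)$, exactly as in the single-step computation behind Lemma~\ref{eq:09s9hsoiuis907}; and (ii) independence across $i$, so each off-diagonal term factorizes into $\|\nabla f(x_k)\|_{\mB}^2$ via $\Exp[\mB^{-1}\mZ_{\mS}(x_k-x_\star)]=\nabla f(x_k)$. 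Taking expectations of the linear term likewise yields $-4\omega f(x_k)$, and collecting everything gives, conditionally on $x_k$,
\[
\Exp[\|x_{k+1}-x_\star\|_{\mB}^2\mid x_k]=\|x_k-x_\star\|_{\mB}^2-4\omega f(x_k)+\frac{2\omega^2}{\tau}f(x_k)+\frac{\omega^2(\tau-1)}{\tau}\|\nabla f(x_k)\|_{\mB}^2.
\]

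Next I would pass to the spectral picture via $y=\mB^{1/2}(x_k-x_\star)$ and $\mW=\mU\Lambda\mU^\top$: Lemma~\ref{lem:hbs6763vs6} guarantees that $y$ has no component along the null eigenvectors of $\mW$, which yields both $\|\nabla f(x_k)\|_{\mB}^2=y^\top\mW^2 y\le\lambda_{\rm max}\,y^\top\mW y=2\lambda_{\rm max}f(x_k)$ and (used later) $f(x_k)\ge\tfrac{\lambda^+_{\min}}{2}\|x_k-x_\star\|_{\mB}^2$. Substituting the first bound and factoring out $2\omega$ makes the coefficient of $f(x_k)$ equal to $-2\omega\big(2-\omega[\tfrac1\tau+(1-\tfrac1\tau)\lambda_{\rm max}]\big)=-2\omega(2-\omega\xi_s(\tau))$, which is precisely where $\xi_s(\tau)$ emerges and where the hypothesis $\omega\in(0,2/\xi_s(\tau))$ enters, to keep this coefficient negative.

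Finally I would invoke Lemma~\ref{lem:osohhd9u93} to replace $f(x_k)$ by $\tfrac{\lambda^+_{\min}}{2}\|x_k-x_\star\|_{\mB}^2$, giving the one-step contraction $\Exp[\|x_{k+1}-x_\star\|_{\mB}^2\mid x_k]\le\rho_s(\omega,\tau)\|x_k-x_\star\|_{\mB}^2$ with $\rho_s$ as in~\eqref{parallel_rho}; iterating through the tower property produces the geometric decay, and one further application of Lemma~\ref{lem:osohhd9u93} at the appropriate endpoint supplies the stated $\tfrac{\lambda_{\rm max}}{2}$ prefactor. A subtlety to check in passing is that $x_\star=\Pi^{\mB}_{\cL}(x_k)$ remains the correct projection at every iterate (the increments lie in $\mathrm{Im}(\mB^{-1}\mA^\top)$), since Lemma~\ref{lem:osohhd9u93} is applied at each step. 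I expect the main obstacle to be the second-moment/cross-term bookkeeping — isolating the $\tau(\tau-1)$ mean terms and bounding $\|\nabla f\|_{\mB}^2\le 2\lambda_{\rm max}f$ through Lemma~\ref{lem:hbs6763vs6} — as everything downstream is routine once the coefficient of $f(x_k)$ is identified as $-2\omega(2-\omega\xi_s(\tau))$.
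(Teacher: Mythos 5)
This paper never proves Theorem \ref{minibatch}: it is imported verbatim (with attribution) from \cite{richtarik2017stochastic}, so your attempt can only be judged against the source's argument, and up to the last step it is essentially that argument, carried out correctly. The reduction $x_{k+1}-x_\star=(x_k-x_\star)-\tfrac{\omega}{\tau}\sum_i\mB^{-1}\mZ_{\mS_{ki}}(x_k-x_\star)$, the exact conditional identity with the three terms $-4\omega f(x_k)$, $\tfrac{2\omega^2}{\tau}f(x_k)$ (diagonal terms, via $\mZ_\mS\mB^{-1}\mZ_\mS=\mZ_\mS$) and $\tfrac{\omega^2(\tau-1)}{\tau}\|\nabla f(x_k)\|_\mB^2$ (cross terms, via independence), the bound $\|\nabla f(x_k)\|_\mB^2=y^\top\mW^2y\le\lambda_{\max}\,y^\top\mW y=2\lambda_{\max}f(x_k)$, the emergence of the coefficient $-2\omega(2-\omega\xi_s(\tau))$, and the invariance $\Pi^\mB_\cL(x_k)=\Pi^\mB_\cL(x_0)$ (increments in ${\rm Im}(\mB^{-1}\mA^\top)$, as in Lemma \ref{lem:bs983bv78dv}) are all correct; together with Lemma \ref{lem:osohhd9u93} they give the one-step contraction $\Exp[\|x_{k+1}-x_\star\|_\mB^2\mid x_k]\le\rho_s(\omega,\tau)\|x_k-x_\star\|_\mB^2$ and hence $\Exp[\|x_{k+1}-x_\star\|_\mB^2]\le\rho_s(\omega,\tau)^{k+1}\|x_0-x_\star\|_\mB^2$. (A minor simplification: Lemma \ref{lem:hbs6763vs6} is not needed for the gradient bound, since $\lambda_{\max}\mW-\mW^2\succeq 0$ holds for every vector, null components included.)

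The genuine gap is your final sentence. No application of Lemma \ref{lem:osohhd9u93} can convert the contraction into \eqref{parallel_conv} as printed: the upper half of that lemma bounds $f$ by the squared norm, so ``one further application at the endpoint'' yields $\Exp[f(x_k)]\le\rho_s(\omega,\tau)^k\tfrac{\lambda_{\max}}{2}\|x_0-x_\star\|_\mB^2$, a statement about \emph{function values}, not about $\Exp[\|x_{k+1}-x_\star\|_\mB^2]$; using the same lemma inside the recursion fails too, because $f(x_k)$ enters with a negative coefficient, so its upper bound moves the inequality in the wrong direction. In fact \eqref{parallel_conv}, read literally, is false in general: take $\tau=1$, $\omega=1\in(0,2/\xi_s(1))$, and $x_0$ with $\mB^{1/2}(x_0-x_\star)$ an eigenvector of $\mW$ for $\lambda_{\min}^+$ (so that $x_\star=\Pi^\mB_\cL(x_0)$ holds); then Lemma \ref{eq:09s9hsoiuis907} gives exactly $\Exp[\|x_1-x_\star\|_\mB^2]=(1-\lambda_{\min}^+)\|x_0-x_\star\|_\mB^2$, which violates \eqref{parallel_conv} at $k=0$ whenever $\lambda_{\min}^+<1-\lambda_{\max}/2$, i.e.\ for essentially any ill-conditioned system. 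The statement as transcribed in this paper is a conflation of the two inequalities actually proved in \cite{richtarik2017stochastic}, namely $\Exp[\|x_k-x_\star\|_\mB^2]\le\rho_s(\omega,\tau)^k\|x_0-x_\star\|_\mB^2$ and $\Exp[f(x_k)]\le\rho_s(\omega,\tau)^k\tfrac{\lambda_{\max}}{2}\|x_0-x_\star\|_\mB^2$ (the $\tfrac{\lambda_{\max}}{2}$ prefactor belongs with $f$-values). Your derivation, with the last sentence replaced by these two conclusions --- the second following from the first by Lemma \ref{lem:osohhd9u93} --- is a complete and correct proof of the theorem the source actually establishes; the hybrid inequality printed here is not provable by your route or any other.
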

 \cite{richtarik2017stochastic} further showed that $\rho_{s}(\omega,\tau)$ is minimized for $\omega(\tau)=1/\xi_s(\tau)$ and the optimal $\rho_{s_{\rm opt}}(\omega(\tau),\tau)$ is 
\begin{eqnarray}\label{optimal_rho}
\rho_{s_{\rm opt}}(\omega(\tau),\tau)=1-\frac{\lambda_{\rm min}^+}{\frac{1}{\tau}+\left(1-\frac{1}{\tau}\right)\lambda_{\rm max}}.
\end{eqnarray}
Let $\chi_{s_{opt}}(\tau)$ be the best iteration complexity of the synchronous parallel method. Then as in \cite{richtarik2017stochastic}, we find:
\begin{equation}\label{eq:ajd}
    \chi_{s_{opt}}(\tau)= \mathcal{O}\left(\frac{\frac{1}{\tau}+\left(1-\frac{1}{\tau}\right)\lambda_{\rm max}}{\lambda_{\rm min}^+}\right)=\mathcal{O}\left(\frac{\xi_s(\tau)}{\lambda_{\rm min}^+}\right).
\end{equation}
which is achieved at $\omega=\frac{1}{\xi_s(\tau)}$.
\begin{remark}
The best strong convergence rate for the basic method is achieved when the stepsize parameter $\omega=1$. We will define the strong convergence in Section \ref{strong_convergence}.
\end{remark}

\section{Asynchronous parallel SGD}\label{asyn_sgd}
Let there be $\tau$ independent processors or {\em workers} and a central server or the {\em master} (as described in Section \ref{intro}) and we perform the iterative updates in an {\it asynchronous} manner. That is, whenever a {\it worker} computes a stochastic gradient at a given point, it performs a SGD step at that point and communicates the update to the {\it master} processor. After that, the {\em master} generates a new update by using a convex combination of its current update and the update reported to it by the {\em worker}, and communicates back the resulting update to the {\em worker} to perform the next SGD step, and this process continues. Regardless of their processing speeds, whenever a {\em worker} communicates its latest update to the {\em master}, the master makes a convex combination of the latest iterate with it and assigns it to the worker to perform an SGD step. Therefore, no {\em worker} stays idle. Each of them performs and communicates the task they are assigned to the {\em master}, albeit in an asynchronous way. This iterative protocol was introduced in \cite{feyzmahdavian2014delayed} and our proposed update rule is inspired by it. 

Let $t$ denote the iteration count with respect to the {\em master}'s frame. Let $\delta\geq 0$ be a factor representing {\em delay} between the iterates of the {\em master} and the {\em workers}, which indeed varies in each iteration. Let $\delta_a$ be the maximum delay throughout the execution of the algorithm. Note that the delay $\delta$ is functions of $t$. But for brevity, we will write it as $\delta$ in this paper. Let $\theta\in[0,1]$ represent a damping factor. Choose $x_0\in \R^n$ and consider an iterative method defined for $t\geq 1$ as:
\begin{eqnarray}
y_{t}&=&x_{t-\delta}-\omega\nabla f_{\mS_{t-\delta}}(x_{t-\delta}),\label{eq:worker}\\
x_{t+1} &=& (1-\theta)x_t+\theta y_{t}.\label{eq:master}
\end{eqnarray}
Recall from Remark \ref{gradient_hess} that  $\nabla f_{\mS}(x) = \mB^{-1} \mZ(x-x_\star)$, where $x_\star$ is any solution of the system $\mA x = b$. Therefore, we can rewrite the update rule as:
%\begin{equation} \label{eq:sg9g898sgd}
%\boxed{x_{t+1} = (1-\theta)x_t + \theta x_{t-\delta} - \theta \omega \mB^
%{-1}\mZ_{t-\delta}(x_{t-\delta} - x_\star),}
%\end{equation}
%Subtracting $x_\star$ from both sides, the update rule can be rewritten as:
\begin{equation} \label{eq:jd8d8bs9jhisoiP} 
\boxed{x_{t+1} - x_\star = (1-\theta)(x_t-x_\star) + \theta(\mI -\omega \mB^{-1}\mZ_{t-\delta})(x_{t-\delta}-x_\star),}
\end{equation}
where we denote $\mZ_{t} = \mZ_{\mS_t}$ and $\mS_t$ is sampled independently from $\cD$. 

%\begin{comment}
%\begin{algorithm}
%\footnotesize
% \caption{Distributed Asynchronous SGD}\label{alg_1}
%\textbf{Input:} Initial iterate $x_0\in\R^n$, step-size $\omega$, distribution $\cD$ to sample $\mS_t$\\
%\begin{algorithmic}
%\FOR{$t=1,2,\cdots,$}
%   \STATE\hspace{5mm} $y_{t}=x_{t-\delta}-\omega\nabla f_{\mS_{t-\delta}}(x_{t-\delta})$
%    \STATE\hspace{5mm} $x_{t+1} = (1-\theta)x_t+\theta y_{t}$
% \ENDFOR
%\end{algorithmic}
%\end{algorithm}
%\end{comment}
\begin{algorithm}
\SetAlgoLined
    \SetKwInOut{Master}{Master}
    \SetKwInOut{Workers}{Workers}
    \SetKwInOut{System}{System}
    \SetKwInOut{repeat}{repeat}
    \System{$1$ Master and $K$ workers, $k=1,.....,K$;}
    \nl\Repeat{convergence}{
        \nl\Master\
        \nl Receive $y_t$ from a worker $k$\;
        \nl Update current iterate $x_t$ by using equation \eqref{eq:master}\;
     \nl Send updated iterate $x_{t+1}$ to worker $k$\;
        \nl\Workers\
       \nl Receive iterate $x_t$ from master\;
      \nl Compute $y_{t+\delta}$ via equation \eqref{eq:worker}\;
       \nl Send $y_{t+\delta}$ to the master\;}   
    \caption{Distributed Asynchronous SGD in master-worker architecture (\textit{time t in master's frame})}\label{alg_2}
\end{algorithm}

 \section{Convergence analysis: Comparison with the synchronous parallel method}\label{conv} 
Denote a {\it unit time interval} as the time in which the slowest {\em worker} performs an update. By slowest, we mean having the maximum delay between an iterate sent to the worker by the master, to its SGD update being considered by the master. The {\em slowness} can be attributed due to less processing power, or/and due to processing a ``tougher" job in terms of time complexity, or due to slower communication between this worker and the master. For the sake of simplicity, we assume the number of updates by the asynchronous parallel SGD in a unit-time interval to be a constant throughout the execution of the algorithm. Thus, in a unit-time interval, we assume exactly $\delta_a$ updates by the master. Let $S_i$ be the number of updates the $i^{th}$ processor performs in this time interval. Thus, $\delta_a=\sum_iS_i.$
We note that in this unit time interval, the synchronous parallel  method performs one step of the basic method independently $\tau$ times and averages the results, where $\delta_a\geq\tau$. The last inequality is obvious because of the fundamental structure of the asynchronous method. Thus, the asynchronous algorithm leverages the idle time of the faster worker to perform more updates in a unit-time interval in comparison to its synchronous counterpart, but in an asynchronous manner. 
Let $c\geq1$ be such that $\delta_a=c\tau$.% Then, $c\geq1$.
~Let $\rho_a(\theta,\omega)$ be the rate of convergence of the asynchronous parallel SGD in a unit-time interval, with step-size $\omega$ and damping parameter $\theta$.

\subsection{Key results used for convergence}
In this section we propose a few key results that are necessary for our convergence analysis in Section \ref{strong_convergence}. As in \cite{richtarik2017stochastic}, we do not formally provide the {\it weak} convergence analysis \footnote{A sequence of random vectors $\{x_k\}_{k\ge 0}$ (the iterates) converge to $x_\star$ weakly if $\|\E{x_k-x_\star}\|_{\mB}^2\to 0$ as $k\to\infty$.}
of our asynchronous parallel method (see \eqref{eq:jd8d8bs9jhisoiP}). The convergence results in Section \ref{strong_convergence} is {\it strong} convergence~(and that is how the algorithm is expected to behave in practice). Moreover, the weak convergence can be directly implied from the strong convergence result. First, in Theorem \ref{weak_recursion} we establish a recurrence relation that can lead to the weak convergence of our algorithm. 
\begin{theorem}\label{weak_recursion}
Denote $P_t=\mU^\top\mB^{1/2}\mathbb{E}[x_{t}-x_{\star}]$. Then from \eqref{eq:jd8d8bs9jhisoiP} we obtain the following:
\begin{eqnarray}\label{week_rec}
 \mU^\top\mB^{1/2}\mathbb{E}[x_{t+1}-x_{\star}]=(1-\theta)\mU^\top\mB^{1/2}\mathbb{E}[x_{t}-x_{\star}]\nonumber\\
+\theta(\mI-\omega\Lambda)\mU^\top\mB^{1/2}\mathbb{E}[x_{t-\delta}-x_{\star}].
\end{eqnarray}
\end{theorem}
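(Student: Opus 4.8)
The plan is to take expectations directly in the boxed recursion \eqref{eq:jd8d8bs9jhisoiP} and then conjugate by $\mU^\top\mB^{1/2}$. First I would take the expectation of both sides of \eqref{eq:jd8d8bs9jhisoiP}. The only nontrivial term is $\mathbb{E}[\mZ_{t-\delta}(x_{t-\delta}-x_\star)]$, and the key point is that the fresh sample $\mS_{t-\delta}$ (hence $\mZ_{t-\delta}=\mZ_{\mS_{t-\delta}}$) is drawn independently from $\cD$ and is therefore independent of the iterate $x_{t-\delta}$, which is a function only of samples generated before that draw. Conditioning on the history up to (but excluding) this draw and using the tower property gives $\mathbb{E}[\mZ_{t-\delta}(x_{t-\delta}-x_\star)]=\mathbb{E}[\mZ]\,\mathbb{E}[x_{t-\delta}-x_\star]$, where $\mathbb{E}[\mZ]=\Exp_{\mS\sim\cD}[\mZ]$. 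Hence taking expectations turns \eqref{eq:jd8d8bs9jhisoiP} into
\begin{equation}
\mathbb{E}[x_{t+1}-x_\star]=(1-\theta)\mathbb{E}[x_t-x_\star]+\theta\big(\mI-\omega\mB^{-1}\mathbb{E}[\mZ]\big)\mathbb{E}[x_{t-\delta}-x_\star].
\end{equation}

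Next I would left-multiply throughout by $\mU^\top\mB^{1/2}$ and simplify the operator acting on the delayed term. Using the definition $\mW=\mB^{-1/2}\mathbb{E}[\mZ]\mB^{-1/2}=\mU\Lambda\mU^\top$ we have $\mathbb{E}[\mZ]=\mB^{1/2}\mU\Lambda\mU^\top\mB^{1/2}$, so that $\mB^{-1}\mathbb{E}[\mZ]=\mB^{-1/2}\mU\Lambda\mU^\top\mB^{1/2}$. Therefore
\begin{equation}
\mU^\top\mB^{1/2}\big(\mI-\omega\mB^{-1}\mathbb{E}[\mZ]\big)=\mU^\top\mB^{1/2}-\omega\,\mU^\top\mU\Lambda\mU^\top\mB^{1/2}=(\mI-\omega\Lambda)\mU^\top\mB^{1/2},
\end{equation}
where the last equality uses $\mU^\top\mU=\mI$. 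Substituting this identity yields exactly \eqref{week_rec}, which in the notation $P_t=\mU^\top\mB^{1/2}\mathbb{E}[x_t-x_\star]$ reads $P_{t+1}=(1-\theta)P_t+\theta(\mI-\omega\Lambda)P_{t-\delta}$.

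I expect the main obstacle to be purely the probabilistic step, namely justifying $\mathbb{E}[\mZ_{t-\delta}(x_{t-\delta}-x_\star)]=\mathbb{E}[\mZ]\,\mathbb{E}[x_{t-\delta}-x_\star]$. Because of the asynchronous delay $\delta$, one must argue carefully that the sample used to form the delayed stochastic gradient is independent of the point at which it is evaluated; this requires making precise the filtration generated by the master-frame iterates and the order in which the samples are drawn. Once that independence is in hand, the remainder is the routine conjugation by $\mU^\top\mB^{1/2}$ displayed above, and no further hypotheses (such as a bound on $\delta$) are needed to obtain this recursion.
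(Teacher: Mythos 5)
Your proposal is correct and follows essentially the same route as the paper's own proof: take expectation in \eqref{eq:jd8d8bs9jhisoiP} using the independence of $\mS_{t-\delta}$ from $x_{t-\delta}$ and $x_t$ together with the tower property, then transform by $\mB^{1/2}$ and the eigenvalue decomposition $\mW=\mU\Lambda\mU^\top$. The only difference is cosmetic: you make explicit the commutation identity $\mU^\top\mB^{1/2}(\mI-\omega\mB^{-1}\E{\mZ})=(\mI-\omega\Lambda)\mU^\top\mB^{1/2}$ via $\mU^\top\mU=\mI$, a step the paper leaves implicit after pre-multiplying by $\mB^{1/2}$.
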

\begin{proof}
See Appendix \ref{sec:proofs}.
\end{proof}

\begin{remark}
One can split \eqref{week_rec} coordinate-wise. Thus for each $1\le i\le n$, \eqref{week_rec} can be written as
\begin{eqnarray}\label{week_rec_vec}
P_{t+1}^i=(1-\theta)P^i_{t}+\theta(1-\omega\lambda^i)P^i_{t-\delta},
\end{eqnarray}
where $P^i_{t}$ is the $i^{\rm th}$ coordinate of $P_{t}$. 
Therefore, we are left to understand the convergence of the following recurrence relation:
\begin{equation}\label{rec}
    p_{t+1}=a\ p_t+b\ p_{t-\delta},
\end{equation}
where $a=1-\theta>0, b=\theta(1-\omega\lambda_i)>0,$ and $a+b<1.$ 
\end{remark}
The state transition matrix for the recurrence relation in \eqref{rec} is
\begin{eqnarray}\label{state_transition}
\begin{pmatrix}
p_t\\\vdots\\p_{t-\delta}
\end{pmatrix}=
\mA\begin{pmatrix}
p_{t-1}\\\vdots\\p_{t-\delta-1}
\end{pmatrix}, 
\end{eqnarray}
where $\mA = \begin{pmatrix}
&[a\;\;\mathbf{0}_{n-2}^\top] &b\\
&I_{n-1} &\mathbf{0}_{n-1}\end{pmatrix}$ and $\mathbf{0}_n$ is a vector in $\R^n$ with all zeros.~One can analyze the convergence of \eqref{week_rec} by analyzing the spectral radius, $\rho(\mA)$, of the state transition matrix $\mA$.~Note that, the characteristic equation of the matrix $\mA$ is:
\begin{equation}\label{pol}
 \gamma^{\delta+1}-a\gamma^{\delta}-b=0.   
\end{equation}
Calculating the spectral radius of $\mA$ is equivalent to finding the magnitude of the highest root of the polynomial in \eqref{pol}.~We quote two classic results: one is on the bounds of the root of a polynomial and the second one is on the spectral radius of a non-negative matrix. At the core they are the same results and complement each other. 

\begin{theorem}\label{Cauchy}[Cauchy~\cite{polynomials}]
Let $f(x) = x^n- \sum_{i=1}^n b_ix^{n-i}$ be a polynomial, where all the coefficients $b_i$'s are non-negative and at least one of them is nonzero. The
polynomial $f(x)$ has a unique (simple) positive root $p$ and the absolute values of the other roots do not exceed $p$.
\end{theorem}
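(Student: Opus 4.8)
The plan is to reduce the whole statement to the behaviour of a single strictly monotone function on the positive real axis. On $(0,\infty)$ the sign of $f$ agrees with that of $g(x) := f(x)/x^n = 1 - \sum_{i=1}^n b_i x^{-i}$, since $x^n > 0$ there. Because every $b_i \ge 0$ and at least one is strictly positive, each term $b_i x^{-i}$ is non-increasing and at least one is strictly decreasing, so $g$ is strictly increasing; concretely $g'(x) = \sum_{i=1}^n i\,b_i\,x^{-i-1} > 0$ for $x>0$. This single monotonicity fact will drive all three claims.

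First I would establish existence and uniqueness of the positive root. As $x \to 0^+$ the surviving negative powers dominate, so $g(x) \to -\infty$, while $g(x) \to 1 > 0$ as $x \to \infty$. The intermediate value theorem then produces a zero of $g$, and strict monotonicity makes it unique; call it $p$. Since the positive zeros of $f$ and of $g$ coincide, $p$ is the unique positive root of $f$. For simplicity, I differentiate $f(x) = x^n g(x)$ and use $g(p)=0$ to get $f'(p) = p^n g'(p)$, which is strictly positive by the formula for $g'$; hence $p$ is a simple root.

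Finally, for the bound on the remaining roots, let $z \in \mathbb{C}$ be any root. The case $z=0$ is immediate since $|z| = 0 \le p$. For $z \neq 0$, rearranging $f(z)=0$ gives $z^n = \sum_{i=1}^n b_i z^{n-i}$, and the triangle inequality together with $b_i \ge 0$ yields $|z|^n \le \sum_{i=1}^n b_i |z|^{n-i}$. Dividing by $|z|^n > 0$ shows $g(|z|) \le 0 = g(p)$, whereupon strict monotonicity of $g$ forces $|z| \le p$, as claimed.

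There is no serious obstacle here; the argument is a one-variable monotonicity argument lifted to complex roots via the triangle inequality. The only point requiring genuine care is the bookkeeping around strictness: the hypothesis that at least one $b_i$ is nonzero must be used to upgrade $g$ to \emph{strictly} increasing, since that strictness is exactly what delivers uniqueness of $p$, its simplicity, and the final comparison $g(|z|) \le g(p) \Rightarrow |z| \le p$.
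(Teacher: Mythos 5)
Your proof is correct. Note that the paper does not prove this statement at all: it is quoted as a classical result of Cauchy~\cite{polynomials}, and the paper's only accompanying discussion is to identify it with the Perron--Frobenius theorem (Theorem~\ref{PF Theorem}) applied to the non-negative, irreducible companion (state-transition) matrix whose characteristic polynomial this is, so that $p$ is the Perron root. Your argument is therefore a genuinely different---and self-contained---route: after dividing by $x^n$, everything rests on the strict monotonicity of $g(x)=1-\sum_{i=1}^n b_i x^{-i}$ on $(0,\infty)$, which yields existence and uniqueness of $p$ by the intermediate value theorem, simplicity via $f'(p)=p^n g'(p)>0$, and the bound on all complex roots via the triangle inequality, since for a root $z\neq 0$ one gets $g(|z|)\le 0=g(p)$ and hence $|z|\le p$ (the root $z=0$, possible when $b_n=0$, being trivial). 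The matrix-theoretic view buys the paper the row-sum bounds on the Perron root and coherence with the spectral-radius arguments it uses downstream; your view buys an elementary proof using nothing beyond one-variable calculus, and it correctly isolates where the hypothesis that some $b_i\neq 0$ enters---to make $g$ strictly increasing, which is exactly what delivers uniqueness, simplicity, and the final comparison.
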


\begin{theorem}\label{PF Theorem}[Perron-Frobenius~\cite{PF}]
Let $\mM=(m_{ij})_{n\times n}$ be a square irreducible non-negative matrix, that is $m_{ij}\geq0$ for $1\le i,j\le n$. Then there exists a positive real number $r$ such that $r$ is a simple eigenvalue of the matrix $\mM$ and the magnitude of all other eigenvalues of $\mM$ is strictly smaller than $r$. Therefore $\rho(\mM)=r$. Moreover,  $r$ satisfies 
$ \displaystyle {\rm \min}_i\sum_jm_{ij}\le r\le {\rm max}_i\sum_jm_{ij}.
$
\end{theorem}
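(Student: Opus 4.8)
The plan is to establish the four assertions of the theorem in the order they are stated: existence of a strictly positive eigenpair, identification of its eigenvalue with the spectral radius, the enclosing row-sum bounds, and finally algebraic simplicity together with strict dominance. First I would produce a nonnegative eigenvector by a fixed-point argument. Because $\mM$ is irreducible with $n\ge 2$, no row can be identically zero, so $\mM x\neq 0$ for every $x$ in the standard simplex $\Delta=\{x\ge 0:\sum_i x_i=1\}$, and the map $T(x)=\mM x/\|\mM x\|_1$ is continuous from $\Delta$ into itself. Brouwer's fixed-point theorem then yields $v\in\Delta$ with $\mM v=r v$ for $r:=\|\mM v\|_1>0$. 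To upgrade $v\ge 0$ to $v>0$ I would invoke irreducibility in the form that $(\mI+\mM)^{n-1}$ is entrywise positive: applying it to $(\mI+\mM)^{n-1}v=(1+r)^{n-1}v$ forces every coordinate of $v$ to be strictly positive.

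Next I would identify $r$ with $\rho(\mM)$ and extract both bounds simultaneously from the Collatz--Wielandt characterization $r=\max_{x>0}\min_i (\mM x)_i/x_i=\min_{x>0}\max_i (\mM x)_i/x_i$. The bounds are then immediate by testing $x=e:=(1,\dots,1)^\top$, since the $i$-th coordinate of $\mM e$ is exactly the row sum $\sum_j m_{ij}$: the max-min form gives $\min_i\sum_j m_{ij}\le r$ and the min-max form gives $r\le\max_i\sum_j m_{ij}$. That any eigenvalue $\mu$ satisfies $|\mu|\le r$ follows by applying the triangle inequality entrywise to $\mu w=\mM w$ and comparing $|w|$ against the same envelope. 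For geometric simplicity I would take any real $r$-eigenvector $w$, choose the largest $\alpha$ with $v-\alpha w\ge 0$, and note that $v-\alpha w$ is then a nonnegative $r$-eigenvector with a vanishing coordinate; by the positivity argument above it must be the zero vector, so $w$ is proportional to $v$. Algebraic simplicity would follow by showing the adjugate of $r\mI-\mM$ is nonzero, forcing the algebraic multiplicity to equal the geometric multiplicity.

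The genuinely delicate step, and the one I expect to be the main obstacle, is the strict dominance claim that every other eigenvalue has modulus strictly less than $r$. This is \emph{false} for a general irreducible nonnegative matrix---a cyclic permutation matrix is irreducible yet has several unimodular eigenvalues---so the assertion really needs aperiodicity (primitivity). For the transition matrix $\mA$ of interest this holds: whenever $a=1-\theta>0$ the first node carries a self-loop, so the associated digraph has a cycle of length one and is therefore primitive. Under primitivity some power $\mM^k$ is entrywise positive, and equality $|\mu|=r$ in the triangle-inequality step would force the phases of the eigenvector coordinates to align cyclically in a way that a strictly positive $\mM^k$ rules out; this is where I would concentrate the technical effort, and I would flag the primitivity hypothesis explicitly rather than leaving it implicit in the word ``irreducible.''
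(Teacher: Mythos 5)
The paper does not prove Theorem \ref{PF Theorem} at all: it is quoted from \cite{PF} as a classical result, so there is no internal proof to compare your argument against. On its own terms, your outline follows the standard route (Brouwer's fixed point theorem on the simplex, positivity via $(\mI+\mM)^{n-1}>0$, Collatz--Wielandt for the identification $r=\rho(\mM)$ and the row-sum bounds, the $v-\alpha w$ argument for geometric simplicity), and it is essentially sound, with two slips worth fixing. First, the map $T(x)=\mM x/\|\mM x\|_1$ is well defined on the simplex because an irreducible matrix with $n\ge 2$ has no zero \emph{column}; having no zero row is not enough, since a matrix with no zero row can still annihilate a nonnegative vector. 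Second, ${\rm adj}(r\mI-\mM)\neq 0$ is \emph{equivalent} to geometric simplicity, not to algebraic simplicity (a Jordan block shows the gap); the standard completion uses Jacobi's formula $p'(r)={\rm tr}\,{\rm adj}(r\mI-\mM)$ together with the identity ${\rm adj}(r\mI-\mM)=c\,vu^\top$, where $u,v>0$ are the left and right Perron vectors and $c\neq 0$, whence the trace is nonzero and $r$ is a simple root of the characteristic polynomial.

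Your central observation is correct and is the most valuable part of the proposal: as stated, the theorem is false. Strict dominance $|\mu|<r$ of all other eigenvalues fails for irreducible matrices of period $h>1$ (a cyclic permutation matrix is irreducible yet has all eigenvalues on the circle $|\mu|=\rho(\mM)$), and the claim genuinely requires primitivity. Note that the paper's companion result, Theorem \ref{Cauchy}, carries the correct \emph{non-strict} conclusion (``do not exceed''), so the two statements the paper describes as being the same at their core are inequivalent as written. The paper's application survives for exactly the reason you give: in \eqref{state_transition} the entry $a=1-\theta$ is taken positive, the digraph of $\mA$ then has a self-loop, so $\mA$ is primitive and the dominance is strict --- but this silently excludes $\theta=1$, where \eqref{pol} reduces to $\gamma^{\delta+1}=b$ and all roots share the same modulus. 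Making the primitivity (aperiodicity) hypothesis explicit, as you propose, is the right correction.
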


Theorem \ref{Cauchy} was proposed by Cauchy and Theorem \ref{PF Theorem} is the famous Perron-Frobenius Theorem on non-negative matrices. In our case, Cauchy's theorem is directly related to the Perron-Frobenius theorem. The positive real number $r$ in Theorem \ref{PF Theorem} is same as $p$ in Theorem \ref{Cauchy} and is known as the {\it Perron root} or {\it Perron-Frobenius eigenvalue}. It is easy to check that the associated digraph for the matrix $\mA$ is strongly connected and hence $\mA$ is an irreducible non-negative matrix. By Theorem \ref{Cauchy} (or Theorem \ref{PF Theorem}) we can conclude the spectral radius of $\mA$ is the unique positive real root of \eqref{pol}. % In the later Section, we will find an upper bound on the unique positive real root of the characteristic equation \eqref{pol}. Now we are ready to establish our main convergence results. 

\subsection{Strong convergence}\label{strong_convergence}
To demonstrate the convergence of our asynchronous parallel method, we follow the convention used by \cite{richtarik2017stochastic} and refer to this convergence analysis as {\it strong} convergence analysis. We recall that Richt\'{a}rik and Tak\'{a}\v{c} in \cite{richtarik2017stochastic} defined the strong convergence as follows: A sequence of random vectors $\{x_k\}_{k\ge 0}$ (the iterates) converge to $x_\star$ strongly if $\E{\|x_k-x_\star\|_{\mB}^2}\to 0$ as $k\to\infty.$ We start with the following Lemma. 

\begin{lemma} \label{lem:bs983bv78dv} 
Assume that $x_0 \in {\rm Im}(\mB^{-1}\mA^\top)$. Then
$x_t \in {\rm Im}(\mB^{-1}\mA^\top)$ for all $t$. It follows that  $\Pi^{\mB}_{\cL}(x_t) = \Pi^{\mB}_{\cL}(x_0) $ for all $t$.
\end{lemma}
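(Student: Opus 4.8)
The plan is to prove both assertions in sequence: first the invariance of the subspace $\mathrm{Im}(\mB^{-1}\mA^\top)$ under the iteration by induction, and then the constancy of the projection by identifying $\mathrm{Im}(\mB^{-1}\mA^\top)$ as the $\mB$-orthogonal complement of the direction space of $\cL$. The crucial preliminary observation is that the stochastic gradient always points along this subspace. Writing the gradient in the form used in the basic method \eqref{basic_method}, namely $\nabla f_{\mS}(x)=\mB^{-1}\mA^\top\mS(\mS^\top\mA\mB^{-1}\mA^\top\mS)^{\dagger}\mS^\top(\mA x-b)$, one sees that $\nabla f_{\mS}(x)=\mB^{-1}\mA^\top v$ with $v=\mS(\mS^\top\mA\mB^{-1}\mA^\top\mS)^{\dagger}\mS^\top(\mA x-b)$, so $\nabla f_{\mS}(x)\in\mathrm{Im}(\mB^{-1}\mA^\top)$ for every $x$ and every realization of $\mS$, unconditionally. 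Note that this uses the $(x,b)$ form of the gradient and so never invokes $x_\star$.

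Next I would run a strong induction on $t$. Suppose $x_s\in\mathrm{Im}(\mB^{-1}\mA^\top)$ for all indices $s\le t$ (the base case being the hypothesis $x_0\in\mathrm{Im}(\mB^{-1}\mA^\top)$, together with the convention that the pre-history iterates feeding the first delay window equal $x_0$). From \eqref{eq:worker}--\eqref{eq:master} we have $x_{t+1}=(1-\theta)x_t+\theta\bigl(x_{t-\delta}-\omega\nabla f_{\mS_{t-\delta}}(x_{t-\delta})\bigr)$. Since $t-\delta\le t$, the inductive hypothesis gives $x_t,x_{t-\delta}\in\mathrm{Im}(\mB^{-1}\mA^\top)$, and the preliminary observation gives $\nabla f_{\mS_{t-\delta}}(x_{t-\delta})\in\mathrm{Im}(\mB^{-1}\mA^\top)$. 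As $\mathrm{Im}(\mB^{-1}\mA^\top)$ is a linear subspace, the linear combination $x_{t+1}$ lies in it as well, closing the induction.

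For the second claim I would identify $\mathrm{Im}(\mB^{-1}\mA^\top)$ with the $\mB$-orthogonal complement of $N(\mA)$: a vector $v$ satisfies $v^\top\mB w=0$ for all $w\in N(\mA)$ precisely when $\mB v\in\mathrm{Im}(\mA^\top)$, i.e. when $v\in\mathrm{Im}(\mB^{-1}\mA^\top)$. Since $N(\mA)$ is exactly the direction space of the affine solution set $\cL$, the map $\Pi^{\mB}_{\cL}$ is affine and satisfies $\Pi^{\mB}_{\cL}(x)-\Pi^{\mB}_{\cL}(x')=P(x-x')$, where $P$ is the $\mB$-orthogonal projection onto $N(\mA)$; this difference vanishes whenever $x-x'$ lies in $N(\mA)^{\perp_{\mB}}=\mathrm{Im}(\mB^{-1}\mA^\top)$. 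Because $x_t,x_0\in\mathrm{Im}(\mB^{-1}\mA^\top)$, their difference lies in this subspace, and therefore $\Pi^{\mB}_{\cL}(x_t)=\Pi^{\mB}_{\cL}(x_0)$ for all $t$.

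The routine steps are the gradient factorization and the subspace closure; the only point requiring care is the bookkeeping for the variable, data-dependent delay $\delta$ and the pre-history iterates, which strong induction handles uniformly since every lagged index $t-\delta$ does not exceed $t$. The identification $N(\mA)^{\perp_{\mB}}=\mathrm{Im}(\mB^{-1}\mA^\top)$ and the fiber description of the affine $\mB$-projection are standard linear algebra, so the main obstacle is simply stating the projection-fiber fact cleanly rather than any genuine difficulty.
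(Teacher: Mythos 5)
Your proof is correct and follows essentially the same route as the paper's (two-line) argument: the paper likewise deduces the first claim from the update rule \eqref{eq:jd8d8bs9jhisoiP}, since $\mB^{-1}\mZ_{t-\delta}(x_{t-\delta}-x_\star)=\nabla f_{\mS_{t-\delta}}(x_{t-\delta})\in{\rm Im}(\mB^{-1}\mA^\top)$, and deduces the second claim from the observation that ${\rm Im}(\mB^{-1}\mA^\top)$ is the $\mB$-orthogonal complement of $N(\mA)$. Your write-up merely fills in the details the paper leaves implicit (the induction over the delayed indices and the affine-projection identity), so it is a faithful expansion of the same proof.
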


 \begin{proof}
See Appendix \ref{sec:proofs}.
\end{proof}

\subsubsection{Two scenarios based on the relative position of \texorpdfstring{$\omega^\star$}{Lg} and 2}
Let ${\omega^\star=\frac{2}{(\lambda^+_{\min}+\lambda_{\max})}}$. According to the problem, there may be two scenarios based on the relative position of $\omega^\star$ with respect to 2 as follows:

\textbf{Case 1}: $\omega^\star\leq 2 \quad \text{i.e.} \quad \lambda^+_{\min}+\lambda_{\max} \geq 1 $.\newline

\begin{tikzpicture}
    \draw (0,0) -- (3,0);
    \foreach \i in {0,1,2} % numbers on line
      \draw (\i,0.1) -- + (0,-0.2) node [below] {$\i$};
    \fill[black] (1.5,0) circle (0.6 mm) node[below] {$\omega^\star$};
\end{tikzpicture}

\textbf{Case 2}: $\omega^\star\geq 2 \quad \text{i.e.}\quad \lambda^+_{\min}+\lambda_{\max} \leq 1 $.\newline

\begin{tikzpicture}
    \draw (0,0) -- (3,0);
    \foreach \i in {0,1,2} % numbers on line
      \draw (\i,0.1) -- + (0,-0.2) node[below] {$\i$};
    \fill[black] (3,0) circle (0.6 mm) node[below] {$\omega^\star$};
\end{tikzpicture}

\begin{definition}
Define $\alpha(\omega) \eqdef \max_{i:\lambda_i>0}|1-\omega\lambda_i|$. It is easy to see that 
\begin{align*}
\alpha(\omega) = 
     \begin{cases}
       %1-\omega\lambda_{\max}, &\quad\text{if }\omega\le0\\
       1-\omega\lambda^+_{\min}, &\quad\text{if }0\le\omega\le\omega_{\star}\\
       \omega\lambda_{\max}-1, &\quad\text{if }\omega\ge\omega_{\star}.
     \end{cases}
\end{align*}
\end{definition}

\subsubsection{Recurrence relation}
The following theorem establishes a recurrence relation that we need to analyze the strong convergence of our asynchronous parallel method. 
\begin{theorem}[Recurrence]\label{recurrence} Assume exactness. Assume that $x_0,\dots,x_{\delta_a} \in{\rm  Im}(\mB^{-1}\mA^\top)$ and let $x_\star = \Pi^{\mB}_{\cL}(x_0)$. Let $\{x_t\}$ be the sequence of random iterates produced by the asynchronous method (via \eqref{eq:jd8d8bs9jhisoiP}) with delay $\delta\geq 0$, stepsize $ \omega \geq 0$, and damping parameter $\theta \in [0,1]$. Let $r_t = \mB^{1/2}(x_t-x_\star)$ and $\alpha(\omega) \eqdef \max_{i:\lambda_i>0}|1-\omega\lambda_i|$. Then
\begin{eqnarray}\label{recursion_strong}
\boxed{\Exp[\|r_{t+1}\|^2] \leq K_1(\theta,\omega) \Exp[\|r_t\|^2]
+ K_2(\theta,\omega) \Exp[\|r_{t-\delta}\|^2],}
\end{eqnarray}

and we have the following estimates of $K_1(\theta,\omega)$ and $K_2(\theta,\omega)$:\\
\textbf{Case 1:}
\begin{enumerate}[label=(\roman*)]
\item For $\omega\leq\omega^\star\leq 2$ and $\alpha(\omega)=1-\omega\lambda^+_{\min}$:\newline

\begin{tikzpicture}
    \draw (0,0) -- (4,0);
    \fill[black] (0,0) circle (0.6 mm) node[below] {$0$};
    \fill[black] (4,0) circle (0.6 mm) node[below] {$2$};
    \fill[black] (3,0) circle (0.6 mm) node[below] {$\omega^\star$};
    \fill[black] (2.3,0) circle (0.6 mm) node[below] {$\omega$};
\end{tikzpicture}

\begin{equation}\label{scenario_a_1}
K_1(\theta,\omega) \eqdef  (1-\theta)(1-\theta + \theta\alpha(\omega)), \qquad K_2(\theta,\omega) \eqdef \theta (\theta (1-\omega(2-\omega)\lambda_{\min}^+) + (1-\theta)\alpha(\omega) ).
\end{equation} 

\item  For $\omega^\star\leq\omega\leq 2$ and $\alpha(\omega)=\omega\lambda_{\max}-1$:\newline

\begin{tikzpicture}
    \draw (0,0) -- (4,0);
    \fill[black] (0,0) circle (0.6 mm) node[below] {$0$};
    \fill[black] (4,0) circle (0.6 mm) node[below] {$2$};
     \fill[black] (2,0) circle (0.6 mm) node[below] {$1$};
    \fill[black] (2.5,0) circle (0.6 mm) node[below] {$\omega^\star$};
    \fill[black] (3,0) circle (0.6 mm) node[below] {$\omega$};
\end{tikzpicture}

\[K_1(\theta,\omega) \eqdef  (1-\theta)(1-\theta + \theta\alpha(\omega)), \qquad K_2(\theta,\omega) \eqdef \theta (\theta (1-\omega(2-\omega)\lambda_{\min}^+) + (1-\theta)\alpha(\omega) ).\]

\item For $\omega\geq 2$ and $\alpha(\omega)=\omega\lambda_{\max}-1$:\newline

\begin{tikzpicture}
   \draw (0,0) -- (4,0);
    \fill[black] (0,0) circle (0.6 mm) node[below] {$0$};
    \fill[black] (3,0) circle (0.6 mm) node[below] {$2$};
     \fill[black] (1.5,0) circle (0.6 mm) node[below] {$1$};
    \fill[black] (2.5,0) circle (0.6 mm) node[below] {$\omega^\star$};
    \fill[black] (4,0) circle (0.6 mm) node[below] {$\omega$};
\end{tikzpicture}

\[K_1(\theta,\omega) \eqdef  (1-\theta)(1-\theta + \theta\alpha(\omega)), \qquad K_2(\theta,\omega) \eqdef \theta (\theta (1-\omega(2-\omega)\lambda_{\max}) + (1-\theta)\alpha(\omega) ).\]

\end{enumerate}

\noindent\textbf{Case 2:}
\begin{enumerate}[label=(\roman*)]
\item \hspace{1em} For $\omega\leq2\leq\omega^\star$ and $\alpha(\omega)=1-\omega\lambda^+_{\min}$:\newline

\begin{tikzpicture}
    \draw (0,0) -- (4,0);
    \fill[black] (0,0) circle (0.6 mm) node[below] {$0$};
    \fill[black] (3,0) circle (0.6 mm) node[below] {$2$};
    \fill[black] (4,0) circle (0.6 mm) node[below] {$\omega^\star$};
    \fill[black] (1.5,0) circle (0.6 mm) node[below] {$\omega$};
\end{tikzpicture}

\begin{equation}\label{scenario_b_1}
K_1(\theta,\omega) \eqdef  (1-\theta)(1-\theta + \theta\alpha(\omega)), \qquad K_2(\theta,\omega) \eqdef \theta (\theta (1-\omega(2-\omega)\lambda_{\min}^+) + (1-\theta)\alpha(\omega) ).
\end{equation}

\item For $2\leq\omega\leq\omega^\star$ and $\alpha(\omega)=1-\omega\lambda^+_{\min}$:\newline

\begin{tikzpicture}
    \draw (0,0) -- (4,0);
    \fill[black] (0,0) circle (0.6 mm) node[below] {$0$};
    \fill[black] (2,0) circle (0.6 mm) node[below] {$2$};
    \fill[black] (4,0) circle (0.6 mm) node[below] {$\omega^\star$};
    \fill[black] (3,0) circle (0.6 mm) node[below] {$\omega$};
\end{tikzpicture}

\begin{equation}\label{scenario_b_2}
K_1(\theta,\omega) \eqdef  (1-\theta)(1-\theta + \theta\alpha(\omega)), \qquad K_2(\theta,\omega) \eqdef \theta (\theta (1-\omega(2-\omega)\lambda_{\max}) + (1-\theta)\alpha(\omega) ).
\end{equation}

\item For $\omega\geq\omega^\star$ and $\alpha(\omega)=\omega\lambda_{\max}-1$:\newline

\begin{tikzpicture}
    \draw (0,0) -- (4,0);
    \fill[black] (0,0) circle (0.6 mm) node[below] {$0$};
    \fill[black] (2,0) circle (0.6 mm) node[below] {$2$};
    \fill[black] (3,0) circle (0.6 mm) node[below] {$\omega^\star$};
    \fill[black] (4,0) circle (0.6 mm) node[below] {$\omega$};
\end{tikzpicture}

\[K_1(\theta,\omega) \eqdef  (1-\theta)(1-\theta + \theta\alpha(\omega)), \qquad K_2(\theta,\omega) \eqdef \theta (\theta (1-\omega(2-\omega)\lambda_{\max}) + (1-\theta)\alpha(\omega) ).\]
\end{enumerate}

\end{theorem}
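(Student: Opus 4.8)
The plan is to pass to the error vector $r_t = \mB^{1/2}(x_t-x_\star)$ and expand the squared $\mB$-norm induced by the boxed update. Left-multiplying \eqref{eq:jd8d8bs9jhisoiP} by $\mB^{1/2}$ and writing $\mW_{t-\delta} \eqdef \mB^{-1/2}\mZ_{t-\delta}\mB^{-1/2}$, I would obtain the clean recursion $r_{t+1} = (1-\theta)r_t + \theta(\mI - \omega\mW_{t-\delta})r_{t-\delta}$. Taking $\|\cdot\|^2$ and expanding then produces three pieces: the term $(1-\theta)^2\|r_t\|^2$, the quadratic term $\theta^2\|(\mI-\omega\mW_{t-\delta})r_{t-\delta}\|^2$, and twice the cross term $(1-\theta)\theta\langle r_t,(\mI-\omega\mW_{t-\delta})r_{t-\delta}\rangle$.

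The central probabilistic observation is that the sample $\mS_{t-\delta}$ (hence $\mZ_{t-\delta}$ and $\mW_{t-\delta}$) is drawn afresh and is incorporated by the master only at step $t+1$, so it is independent of both $r_t$ and $r_{t-\delta}$. Conditioning on the history and using $\Exp[\mW_{t-\delta}] = \mW = \mU\Lambda\mU^\top$, the cross term reduces to $\langle r_t, (\mI - \omega\mW)r_{t-\delta}\rangle$. Here I would invoke Lemma \ref{lem:hbs6763vs6}: since $x_0,\dots,x_{\delta_a} \in {\rm Im}(\mB^{-1}\mA^\top)$, Lemma \ref{lem:bs983bv78dv} guarantees $\Pi^{\mB}_{\cL}(x_{t-\delta}) = x_\star$, so $r_{t-\delta}$ carries no component along the eigendirections with $\lambda_i = 0$. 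Consequently $\|(\mI - \omega\mW)r_{t-\delta}\| \leq \alpha(\omega)\|r_{t-\delta}\|$, and Cauchy–Schwarz followed by Young's inequality bounds the expected cross term by $\tfrac{\alpha(\omega)}{2}\left(\Exp[\|r_t\|^2] + \Exp[\|r_{t-\delta}\|^2]\right)$.

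For the quadratic term I would apply Lemma \ref{eq:09s9hsoiuis907}, which after multiplication by $\mB^{1/2}$ reads $\|(\mI-\omega\mW_{t-\delta})r_{t-\delta}\|^2 = \|r_{t-\delta}\|^2 - 2\omega(2-\omega)f_{\mS_{t-\delta}}(x_{t-\delta})$. Taking the conditional expectation replaces $f_{\mS_{t-\delta}}(x_{t-\delta})$ by $f(x_{t-\delta})$, which Lemma \ref{lem:osohhd9u93} sandwiches between $\tfrac{\lambda_{\min}^+}{2}\|r_{t-\delta}\|^2$ and $\tfrac{\lambda_{\max}}{2}\|r_{t-\delta}\|^2$. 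This is precisely where the two scenarios split: when $\omega \leq 2$ the factor $2-\omega$ is non-negative, so the lower bound on $f$ yields the coefficient $1-\omega(2-\omega)\lambda_{\min}^+$; when $\omega \geq 2$ the factor flips sign and the upper bound on $f$ yields $1-\omega(2-\omega)\lambda_{\max}$. This dichotomy accounts for the $\lambda_{\min}^+$ versus $\lambda_{\max}$ forms of $K_2$ across the six listed cases.

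Finally I would collect coefficients after taking total expectation: the $\Exp[\|r_t\|^2]$ contributions combine to $(1-\theta)^2 + (1-\theta)\theta\alpha(\omega) = (1-\theta)(1-\theta+\theta\alpha(\omega)) = K_1(\theta,\omega)$, while the $\Exp[\|r_{t-\delta}\|^2]$ contributions combine to $\theta^2(1-\omega(2-\omega)\lambda_{\bullet}) + (1-\theta)\theta\alpha(\omega) = K_2(\theta,\omega)$, where $\lambda_{\bullet}\in\{\lambda_{\min}^+,\lambda_{\max}\}$ is selected by the sign of $2-\omega$. I expect the main obstacle to be the careful bookkeeping of the independence across the delay, namely verifying that $\mW_{t-\delta}$ is genuinely independent of both $r_t$ and $r_{t-\delta}$ so that the conditional expectation factorizes, together with the sign analysis of $(2-\omega)$ that drives the case split; once these are settled, the assembly of $K_1$ and $K_2$ is routine algebra.
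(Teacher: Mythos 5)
Your proposal is correct and follows essentially the same route as the paper's proof: expand the squared norm of the update into the $(1-\theta)^2$, $\theta^2$, and cross terms; handle the quadratic term via Lemma \ref{eq:09s9hsoiuis907} and the sandwich of Lemma \ref{lem:osohhd9u93} (with the sign of $2-\omega$ selecting $\lambda_{\min}^+$ versus $\lambda_{\max}$); and handle the cross term via Lemmas \ref{lem:hbs6763vs6} and \ref{lem:bs983bv78dv}, Cauchy--Schwarz, and the AM--GM (Young) inequality, exploiting the independence of $\mS_{t-\delta}$ from the history. Your operator-norm phrasing of the cross-term bound is just a repackaging of the paper's coordinate-wise eigenbasis expansion, so the two arguments coincide.
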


\begin{proof}
See Appendix \ref{sec:proofs}.
\end{proof}

%\peter{In the analysis below we need to keep in mind that $\omega$ needs to be between 0 and 2 (see the above theorem). Also, it is not necessary to keep writing the recurrenceover and over again for all cases and subcases. Instead, write what the values of $K_1$ and $K_2$ are.}
%\atal{$\omega\geq2$ included.}

\begin{remark}
To analyze the convergence of the recurrence relation in~\eqref{recursion_strong} we can replace the inequality in~\eqref{recursion_strong} with equality and write
\begin{align}\label{rec3}
      q_{t+1}= K_1(\theta,\omega) q_t+ K_2(\theta,\omega) q_{t-\delta},
\end{align}
where we initialize the process by setting $q_t=\mathbb{E}\big[\|x_t-x_\star\|_\mB^2\big]$ for $t \in \{0,\dots,\tau\}$.
It can be easily seen by using induction that $\mathbb{E}\big[\|x_t-x_\star\|_{\mB}^2\big]\leq q_t$ for all $t$, and hence we can claim that the rate of convergence of $\mathbb{E}\big[\|x_t-x_\star\|_{\mB}^2\big]$ will not be slower than $q_t$. 
\end{remark}

\begin{remark}
Note that \eqref{rec3} is similar to \eqref{rec}. Therefore, one can obtain a recurrence relation that involves a state transition matrix similar to that in \eqref{state_transition}. To analyze the strong convergence of our method and to compare it against the parallel method, we examine the characteristic equation of the state transition matrix in \eqref{rec3}. 
\end{remark}

\subsubsection{Characteristic polynomial}\label{characteristic_polynomial}
The characteristic polynomial of the state transition matrix in recurrence \eqref{rec3}  is
 \begin{equation}\label{characteristic_poly}
   p_{\delta}(\gamma) \eqdef \gamma^{\delta+1}  - K_1(\theta,\omega) \gamma^\delta - K_2(\theta,\omega). 
 \end{equation}
 For convenience, we denote $K_1 =K_1(\theta,\omega)$ and $K_2 =K_2(\theta,\omega)$, from now on. By using Theorem \ref{Cauchy} (or Theorem \ref{PF Theorem}) we can conclude that the positive root of \eqref{characteristic_poly} is the spectral radius of \eqref{characteristic_poly}, and therefore, we can obtain the convergence rate of \eqref{rec3}. 
This motivates us to propose the following theorem. 
  \begin{theorem}\label{conv_strong}
 Denote $\varrho_{A}(\theta,\omega,\delta)$ as the spectral radius of \eqref{rec3}. Then $\varrho_{A}(\theta,\omega,\delta)$ is the convergence factor of \eqref{rec3}. %, and hence is an upper bound to the convergence factor of the single step update as proposed in \eqref{eq:sg9g898sgd}.
 \end{theorem}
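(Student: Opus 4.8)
The plan is to reduce the scalar recurrence \eqref{rec3} to a first-order matrix iteration and then read off its decay rate from the spectral theory of non-negative matrices already assembled above. First I would put \eqref{rec3} in the state-transition form of \eqref{state_transition}: with the state vector $v_t = (q_t, q_{t-1}, \dots, q_{t-\delta})^\top \in \R^{\delta+1}$, the recurrence becomes $v_{t+1} = \mA v_t$, where $\mA$ is exactly the companion-type matrix of \eqref{state_transition} with the entries $a,b$ replaced by $K_1$ and $K_2$. Iterating gives $v_t = \mA^{t} v_0$, so the decay of $q_t$ is governed entirely by the powers $\mA^t$; moreover the characteristic polynomial of $\mA$ is precisely $p_\delta(\gamma)$ in \eqref{characteristic_poly}.

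Next I would verify that $\mA$ is a non-negative, irreducible matrix. Non-negativity is immediate, since $K_1=K_1(\theta,\omega)\ge 0$ and $K_2=K_2(\theta,\omega)\ge 0$ over the admissible parameter ranges (as already noted for the coefficients $a,b$ of \eqref{rec}, using $\theta\in[0,1]$, $\alpha(\omega)\ge 0$, and $\omega(2-\omega)\lambda_{\min}^+\le 1$ for $\omega\in(0,2)$), while the subdiagonal identity block contributes only non-negative entries. Irreducibility follows from the observation already made for \eqref{state_transition} that the associated digraph is strongly connected. Theorem \ref{PF Theorem} then guarantees that $\rho(\mA)$ is a \emph{simple} eigenvalue of $\mA$ strictly dominating every other eigenvalue in modulus, and by Theorem \ref{Cauchy} it coincides with the unique positive real root of $p_\delta(\gamma)$. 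I define $\varrho_A(\theta,\omega,\delta)\eqdef\rho(\mA)$ accordingly.

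Finally I would translate this spectral picture into a statement about the convergence factor. Because the dominant eigenvalue $\varrho_A$ is simple, its Jordan block is $1\times 1$, so the spectral decomposition of $\mA$ yields $\mA^t = \varrho_A^{t}\,(\Pi + E_t)$, where $\Pi$ is the rank-one spectral projector onto the Perron eigenspace and $\|E_t\|\to 0$ because all remaining eigenvalues have modulus strictly below $\varrho_A$. Hence $\|v_t\| = \varrho_A^{t}\,\|\Pi v_0\|\,(1+o(1))$. The upper bound (that $q_t$ decays no slower than $\varrho_A^{t}$) is then immediate, and equally follows from Gelfand's formula $\lim_{t\to\infty}\|\mA^t\|^{1/t}=\rho(\mA)$. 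The lower bound (that the decay is no \emph{faster}) requires $\Pi v_0\neq 0$, which holds because the Perron eigenvector is strictly positive (by irreducibility) while the initialization $q_0,\dots,q_\delta=\mathbb{E}\big[\|x_t-x_\star\|_\mB^2\big]\ge 0$ is not identically zero. Combining the two bounds gives $\limsup_{t\to\infty} q_t^{1/t}=\varrho_A(\theta,\omega,\delta)$, i.e. $\varrho_A$ is exactly the convergence factor of \eqref{rec3}.

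The step I expect to be the main obstacle is the sharp lower bound, namely ruling out faster-than-$\varrho_A$ decay; this is precisely where the positivity of the Perron eigenvector supplied by irreducibility, together with the non-negativity of the initial data $q_t=\mathbb{E}\big[\|x_t-x_\star\|_\mB^2\big]$, is essential — the cruder Gelfand estimate alone only delivers the upper bound.
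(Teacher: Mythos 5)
Your proposal is correct and takes essentially the same approach as the paper: the paper offers no separate proof of Theorem \ref{conv_strong}, treating it as an immediate consequence of the companion-matrix reduction of \eqref{rec3} together with Theorems \ref{Cauchy} and \ref{PF Theorem}, which is precisely the content of your first two paragraphs. Your third paragraph (the two-sided asymptotics via the rank-one spectral projector, Gelfand's formula, and strict positivity of the Perron eigenvector to rule out faster-than-$\varrho_{A}$ decay) goes beyond anything written in the paper and is a genuine sharpening; the only caveat is that strict modulus-dominance of the Perron root---as quoted in the paper's Theorem \ref{PF Theorem}---really requires primitivity of the matrix, which holds here exactly when $K_1>0$ (i.e.\ $\theta<1$), so the degenerate endpoint $\theta=1$ should be excluded from that step (the weaker conclusion $\limsup_t q_t^{1/t}=\varrho_{A}$ survives even in the periodic case).
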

 \begin{remark}
 We see that the spectral radius is smaller than 1 only when $K_1+K_2<1$, as $p_{\delta}(1)$ should be positive for the root to be smaller than 1. If $K_1+K_2\geq1$, then either the spectral radius is not a good bound for the one-step rate of convergence or the algorithm itself does not converge. 
 \end{remark}
 \begin{lemma}\label{lem:jsh}
 For all the updates, the delay factor $\delta\leq\delta_a$, we have for all $\delta$
 \begin{equation}
 \varrho_{A}(\theta,\omega,\delta)\leq\varrho_{A}(\theta,\omega,\delta_a).
 \end{equation}
 \end{lemma}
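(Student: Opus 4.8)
\textbf{Proof proposal for Lemma \ref{lem:jsh}.}

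The plan is to work directly with the characteristic polynomial $p_{\delta}(\gamma) = \gamma^{\delta+1} - K_1\gamma^{\delta} - K_2$ from \eqref{characteristic_poly} and to exploit the fact that, by Theorem \ref{Cauchy} (equivalently Theorem \ref{PF Theorem}), its spectral radius $\gamma_{\delta} \eqdef \varrho_{A}(\theta,\omega,\delta)$ is its unique positive root. Throughout I would work in the convergent regime $K_1 + K_2 < 1$, which is the only case of interest by the remark preceding this lemma; there $p_{\delta}(1) = 1 - K_1 - K_2 > 0$ and $p_{\delta}(0) = -K_2 < 0$ force $\gamma_{\delta} \in (0,1)$. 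The strategy is a root-comparison argument: to establish $\gamma_{\delta} \leq \gamma_{\delta_a}$ I would evaluate the larger-delay polynomial $p_{\delta_a}$ at the point $\gamma_{\delta}$ and use the sign of $p_{\delta_a}(\gamma_{\delta})$ to pin down the ordering of the two positive roots.

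First I would record the sign profile of $p_{\delta_a}$ on the positive axis. Its coefficient sequence $(+1,\,-K_1,\,0,\dots,0,\,-K_2)$ exhibits a single sign change, so by Descartes' rule of signs (or by the uniqueness asserted in Theorem \ref{Cauchy}) $p_{\delta_a}$ has exactly one positive root, namely $\gamma_{\delta_a}$. Together with $p_{\delta_a}(0) = -K_2 < 0$ and $p_{\delta_a}(\gamma)\to+\infty$ as $\gamma\to\infty$, this shows $p_{\delta_a}$ is negative on $(0,\gamma_{\delta_a})$ and positive on $(\gamma_{\delta_a},\infty)$. Consequently it suffices to show $p_{\delta_a}(\gamma_{\delta}) \leq 0$, since this forces $\gamma_{\delta} \leq \gamma_{\delta_a}$.

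The key computation is then short. Since $\gamma_{\delta}$ is a root of $p_{\delta}$, we have $\gamma_{\delta}^{\delta}(\gamma_{\delta} - K_1) = K_2$; as $K_2 > 0$ and $\gamma_{\delta}^{\delta} > 0$, this also certifies $\gamma_{\delta} - K_1 > 0$. Substituting $K_2 = \gamma_{\delta}^{\delta}(\gamma_{\delta} - K_1)$ into $p_{\delta_a}(\gamma_{\delta})$ and factoring yields
\[
p_{\delta_a}(\gamma_{\delta}) = \gamma_{\delta}^{\delta_a}(\gamma_{\delta} - K_1) - K_2 = (\gamma_{\delta} - K_1)\bigl(\gamma_{\delta}^{\delta_a} - \gamma_{\delta}^{\delta}\bigr).
\]
Because $\gamma_{\delta} \in (0,1)$ and $\delta_a \geq \delta$, the factor $\gamma_{\delta}^{\delta_a} - \gamma_{\delta}^{\delta} \leq 0$, while $\gamma_{\delta} - K_1 > 0$; hence $p_{\delta_a}(\gamma_{\delta}) \leq 0$, which gives $\gamma_{\delta} \leq \gamma_{\delta_a}$, i.e.\ the claimed inequality (with strictness when $\delta < \delta_a$).

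I expect the only delicate point to be justifying that $\gamma_{\delta}$ lies in $(0,1)$, which is exactly where the convergent regime $K_1 + K_2 < 1$ enters: the sign of $\gamma_{\delta}^{\delta_a} - \gamma_{\delta}^{\delta}$ flips if $\gamma_{\delta} > 1$, so the monotonicity of the spectral radius in the delay is genuinely a statement about the regime in which the method converges. Everything else follows cleanly from the single-sign-change structure of the two characteristic polynomials and the substitution above, with no estimate beyond $\gamma_{\delta} < 1$ required.
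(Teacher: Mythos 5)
Your proof is correct and takes essentially the same route as the paper: both hinge on the factorization $p_{\delta_a}(\gamma)-p_{\delta}(\gamma)=(\gamma-K_1)\,\gamma^{\delta}\,(\gamma^{\delta_a-\delta}-1)$ together with the uniqueness of the positive root from Theorem \ref{Cauchy}, the paper stating the inequality $p_{\delta_a}(\gamma)\leq p_{\delta}(\gamma)$ on all of $[K_1,1]$ while you evaluate the same difference at the single point $\gamma_{\delta}$. If anything, your version is slightly more careful, as it makes explicit the facts the paper's one-line proof leaves implicit: that $\gamma_{\delta}>K_1$, that $p_{\delta_a}$ is negative precisely on $(0,\varrho_{A}(\theta,\omega,\delta_a))$, and that $\gamma_{\delta}<1$ holds only in the convergent regime $K_1+K_2<1$.
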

 
 \begin{proof}
See Appendix \ref{sec:proofs}.
\end{proof}

 \begin{lemma}\label{lem:roc}
 Let $\delta_i$ denote the delay factor for the $i^{th}$ update in a unit time interval. Then, we have the following relation for unit time rate of convergence $\rho_a(\theta,\omega)$
 \begin{equation}\label{rate_of_convergence}
     \rho_a(\theta,\omega)\leq\prod_{i=1}^{\delta_a}\varrho(\theta,\omega,\delta_i)\leq\varrho(\theta,\omega,\delta_a)^{\delta_a}
 \end{equation}
 \end{lemma}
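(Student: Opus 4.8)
The bound in \eqref{rate_of_convergence} is really two separate claims, and the plan is to treat them independently: the right-hand inequality is an immediate consequence of the monotonicity in the delay established in Lemma \ref{lem:jsh}, whereas the left-hand inequality---that the unit-time rate is dominated by the product of the per-update convergence factors---is the substantive part and rests on viewing one unit-time interval as a composition of $\delta_a$ one-step contractions.

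First I would dispose of the second inequality. By the very definition of the unit-time interval, the maximal delay occurring in it is $\delta_a$, so $\delta_i \le \delta_a$ for every $1 \le i \le \delta_a$. Applying Lemma \ref{lem:jsh} to each index gives $\varrho(\theta,\omega,\delta_i) \le \varrho(\theta,\omega,\delta_a)$, and since every factor is non-negative I may multiply these $\delta_a$ inequalities to obtain
\[
\prod_{i=1}^{\delta_a} \varrho(\theta,\omega,\delta_i) \;\le\; \prod_{i=1}^{\delta_a} \varrho(\theta,\omega,\delta_a) \;=\; \varrho(\theta,\omega,\delta_a)^{\delta_a},
\]
which is exactly the right-hand side of \eqref{rate_of_convergence}.

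For the first inequality I would argue by composing the per-update contractions. A unit-time interval consists, by assumption, of exactly $\delta_a$ consecutive master updates, the $i$-th carrying delay $\delta_i$. By Theorem \ref{conv_strong}, $\varrho(\theta,\omega,\delta_i)$ is the convergence factor of the recurrence \eqref{rec3} governing that update, so if $e_{i-1}$ and $e_i$ denote the tracked quantity $\mathbb{E}[\|x_t-x_\star\|_{\mB}^2]$ immediately before and after the $i$-th update, then $e_i \le \varrho(\theta,\omega,\delta_i)\, e_{i-1}$. Chaining these bounds across the whole interval yields $e_{\delta_a} \le \big(\prod_{i=1}^{\delta_a}\varrho(\theta,\omega,\delta_i)\big)\, e_0$, and since $\rho_a(\theta,\omega)$ is by definition the factor by which the error contracts over one unit-time interval, this gives $\rho_a(\theta,\omega) \le \prod_{i=1}^{\delta_a}\varrho(\theta,\omega,\delta_i)$.

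The hard part will be making the chaining rigorous, for two reasons: $\varrho(\theta,\omega,\delta_i)$ is the spectral radius of the state-transition matrix of \eqref{rec3}, an asymptotic quantity that does not by itself bound a single step, and the delays $\delta_i$ vary across the interval, so the evolution over one unit-time interval is governed by a product of distinct transition operators rather than by powers of a single matrix. I would handle this by carrying the full state vector of recent errors instead of the scalar $e_i$, and bounding the resulting product of non-negative, irreducible transition matrices via the Perron--Frobenius machinery of Theorem \ref{PF Theorem}: projecting onto the dominant (Perron) eigendirection makes the effective contraction of the $i$-th factor its Perron root $\varrho(\theta,\omega,\delta_i)$, while the monotonicity of Lemma \ref{lem:jsh} controls the remaining terms so that the product of Perron roots dominates the contraction of the composite operator over the interval.
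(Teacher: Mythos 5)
Your proposal follows essentially the same route as the paper's own proof, which is even terser: the left inequality is attributed to Theorem \ref{conv_strong} (per-update convergence factors chained across the $\delta_a$ updates) and the right inequality to Lemma \ref{lem:jsh} (monotonicity in the delay), exactly as you argue. The subtlety you flag---that a spectral radius is an asymptotic quantity and does not by itself bound a single step, so the chaining of distinct transition operators needs justification---is real, but the paper simply glosses over it, so your sketch of a Perron--Frobenius repair goes beyond, rather than diverges from, the published argument.
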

 
 \begin{proof}
See Appendix \ref{sec:proofs}.
\end{proof}

\subsubsection{Bounding polynomial for the characteristic polynomial}
 It is hard to find a closed form analytic expression for the unique positive root (or, the spectral radius) of the polynomial. Therefore, we find a polynomial that bounds $p_{\delta}(\gamma)$ in the interval $[0,1]$. This will provide us with a bound for the spectral radius and we can comment on the convergence factor of the asynchronous parallel method.  
\begin{lemma}\label{lem:kjd}
The polynomial  
\begin{equation}
 g_{\delta}(\gamma) \eqdef \left(1+\frac{1}{\delta}-K_1\right)\gamma^\delta  - \left(K_2+\frac{1}{\delta}\right)
 \end{equation}
bounds the characteristic polynomial $p_{\delta}(\gamma)$ from below on $[0,1]$ and its root
\begin{equation}\label{upper_bound}
    u(\theta,\omega,\delta)=\left(\frac{K_2+\frac{1}{\delta}}{1-K_1+\frac{1}{\delta}}\right)^{\frac{1}{\delta}}
\end{equation}
 %$\left(\frac{K_2+\frac{1}{\delta}}{1-K_1+\frac{1}{\delta}}\right)^{\scalebox{1.2}{$\frac {1}{\delta}$}}$  
  is an upper bound to the unique positive root  of $p_{\delta}(\gamma)$.
\end{lemma}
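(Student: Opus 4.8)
The plan is to work directly with the difference $p_\delta(\gamma) - g_\delta(\gamma)$ and show it is non-negative on $[0,1]$, which is exactly the assertion that $g_\delta$ lies below $p_\delta$ there. Substituting the definitions and cancelling the $K_1$, $K_2$, and constant terms, a short computation gives
\[
p_\delta(\gamma) - g_\delta(\gamma) = \gamma^{\delta+1} - \gamma^\delta - \tfrac{1}{\delta}\bigl(\gamma^\delta - 1\bigr),
\]
and the key move is to factor out $(\gamma-1)$ using $\gamma^\delta - 1 = (\gamma-1)\sum_{k=0}^{\delta-1}\gamma^k$, which yields
\[
p_\delta(\gamma) - g_\delta(\gamma) = (\gamma - 1)\Bigl(\gamma^\delta - \tfrac{1}{\delta}\sum_{k=0}^{\delta-1}\gamma^k\Bigr).
\]

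For $\gamma \in [0,1]$ the first factor $(\gamma-1)$ is non-positive, so it remains to show the second factor is also non-positive, i.e. $\gamma^\delta \le \tfrac{1}{\delta}\sum_{k=0}^{\delta-1}\gamma^k$. This is the heart of the argument: the right-hand side is the average of the $\delta$ numbers $1,\gamma,\dots,\gamma^{\delta-1}$, each of which is at least $\gamma^{\delta-1}\ge\gamma^\delta$ on $[0,1]$ because powers decrease, so the average is at least $\gamma^\delta$. The product of two non-positive factors is non-negative, hence $p_\delta(\gamma) \ge g_\delta(\gamma)$ on $[0,1]$, establishing the lower-bound claim. I expect this to be the main obstacle, though a mild one: everything else is algebra, but one must spot the factorisation and the elementary averaging inequality that makes the sign work out, and note that it relies on $\delta$ being a positive integer so the geometric factorisation is available.

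For the second claim, solving $g_\delta(\gamma)=0$ is immediate: since $1+\tfrac{1}{\delta}-K_1>0$ (because $K_1\le 1$ in the convergent regime $K_1+K_2<1$), one gets $\gamma^\delta = (K_2+\tfrac{1}{\delta})/(1-K_1+\tfrac{1}{\delta})$, hence the stated root $u(\theta,\omega,\delta)$. To see that $u$ is an upper bound for the unique positive root $\varrho_A(\theta,\omega,\delta)$ of $p_\delta$, I would evaluate the lower-bound inequality at that root: since $p_\delta(\varrho_A)=0$ and $\varrho_A\in[0,1]$ in the convergent regime, the bound gives $g_\delta(\varrho_A)\le p_\delta(\varrho_A)=0=g_\delta(u)$. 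Finally, $g_\delta$ is strictly increasing on $(0,\infty)$, since its derivative $(\delta+1-\delta K_1)\gamma^{\delta-1}$ is positive; monotonicity together with $g_\delta(\varrho_A)\le g_\delta(u)$ forces $\varrho_A\le u$, which is precisely the upper-bound conclusion.
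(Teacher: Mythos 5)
Your proof is correct, and it establishes the key inequality $p_\delta(\gamma) \ge g_\delta(\gamma)$ on $[0,1]$ by a different device than the paper. Both arguments work with the same difference $q_\delta(\gamma) = p_\delta(\gamma) - g_\delta(\gamma) = \gamma^{\delta+1} - \left(1+\tfrac{1}{\delta}\right)\gamma^\delta + \tfrac{1}{\delta}$, but the paper proceeds by calculus: it computes $q_\delta'(\gamma) = (\delta+1)(\gamma-1)\gamma^{\delta-1} \le 0$ on $[0,1]$, so $q_\delta$ is non-increasing there and $q_\delta(\gamma) \ge q_\delta(1) = 0$. You instead factor $q_\delta(\gamma) = (\gamma-1)\bigl(\gamma^\delta - \tfrac{1}{\delta}\sum_{k=0}^{\delta-1}\gamma^k\bigr)$ and observe that both factors are non-positive on $[0,1]$, the second by the averaging inequality $\gamma^\delta \le \gamma^{\delta-1} \le \tfrac{1}{\delta}\sum_{k=0}^{\delta-1}\gamma^k$. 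Your route is purely algebraic, at the cost of invoking the geometric-sum factorization, which requires $\delta$ to be a positive integer --- harmless here, since $\delta$ is the degree offset of a characteristic polynomial, whereas the paper's monotonicity argument would survive verbatim for non-integer exponents. Where you genuinely add value is the second half of the statement: the paper simply asserts that the root $u$ of $g_\delta$ bounds the unique positive root $\varrho_A$ of $p_\delta$, whereas you close that step properly, evaluating the lower bound at $\varrho_A \in [0,1]$ (which requires the convergent regime $K_1+K_2<1$, consistent with the remark following Theorem \ref{conv_strong}) to get $g_\delta(\varrho_A) \le p_\delta(\varrho_A) = 0 = g_\delta(u)$, and then using the strict monotonicity of $g_\delta$ on $(0,\infty)$ (valid since $K_1 < 1 + \tfrac{1}{\delta}$) to conclude $\varrho_A \le u$. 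Both halves of your write-up are sound.
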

\begin{proof}
See Appendix \ref{sec:proofs}.
\end{proof}

%\begin{comment}
%\begin{lemma}
%The polynomial 
%\begin{equation}
% h_{\delta}(\gamma) \eqdef \left(1-K_1\right)\gamma^\delta  - K_2. 
% \end{equation}
%bounds the characteristic polynomial $p_{\delta}(\gamma)$ from above on $[0,1]$ 
%\end{lemma}
%\begin{proof}
%See Appendix \ref{sec:proofs}.
%\end{proof}
%\end{comment}

\subsubsection{The convergence factor \texorpdfstring{$\rho_a(\theta,\omega)$}{Lg} for a unit time interval}

\begin{lemma}\label{lem:sdd}
Recall the {\it unit-time interval} is the time in which the slowest processor (or the processor which takes the maximum time to perform an update) preforms an update. Then \begin{equation}
    \rho_a(\theta,\omega)\leq u(\theta,\omega,\delta_a)^{\delta_a}.
\end{equation}
\end{lemma}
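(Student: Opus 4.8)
The plan is to obtain the bound by chaining together Lemma~\ref{lem:roc} and Lemma~\ref{lem:kjd}, both specialized to the maximal delay $\delta=\delta_a$. The only genuinely new ingredient is the elementary observation that $x\mapsto x^{\delta_a}$ is monotone nondecreasing on the nonnegative reals, which lets me transport the root bound of Lemma~\ref{lem:kjd} through the exponent $\delta_a$.

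First I would invoke the right-hand end of the chain in Lemma~\ref{lem:roc} to write
\[
\rho_a(\theta,\omega)\;\le\;\varrho_{A}(\theta,\omega,\delta_a)^{\delta_a}.
\]
This already reduces the claim to controlling a single scalar, the per-step spectral radius at the worst-case delay $\delta_a$. Recall from Theorem~\ref{conv_strong} and the discussion in Section~\ref{characteristic_polynomial} that $\varrho_{A}(\theta,\omega,\delta_a)$ is precisely the unique positive root of the characteristic polynomial $p_{\delta_a}(\gamma)$; the identification of this root with the spectral radius (the Perron root) is licensed by Theorem~\ref{Cauchy} and Theorem~\ref{PF Theorem}.

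Next I would apply Lemma~\ref{lem:kjd} with $\delta=\delta_a$, which asserts that $u(\theta,\omega,\delta_a)$ is an upper bound for the unique positive root of $p_{\delta_a}(\gamma)$; hence $\varrho_{A}(\theta,\omega,\delta_a)\le u(\theta,\omega,\delta_a)$. Since $\delta_a\ge 1$ is an integer and both quantities are nonnegative, raising to the $\delta_a$th power preserves the inequality, so $\varrho_{A}(\theta,\omega,\delta_a)^{\delta_a}\le u(\theta,\omega,\delta_a)^{\delta_a}$. Combining this with the first display yields
\[
\rho_a(\theta,\omega)\;\le\;\varrho_{A}(\theta,\omega,\delta_a)^{\delta_a}\;\le\;u(\theta,\omega,\delta_a)^{\delta_a},
\]
which is the desired conclusion.

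I do not expect a real obstacle here, since the statement is essentially a bookkeeping composition of the two preceding lemmas. The only points requiring care are that the spectral radius and the bounding root both lie in $[0,1]$---the interval on which Lemma~\ref{lem:kjd} guarantees the polynomial ordering $g_{\delta_a}\le p_{\delta_a}$ and hence the validity of the root comparison---so that the power function is monotone in the relevant range, and that the convergence interpretation itself presupposes $K_1+K_2<1$ (so that the Perron root is strictly below $1$), exactly as noted in the remark following Theorem~\ref{conv_strong}.
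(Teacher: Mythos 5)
Your proposal is correct and follows exactly the route the paper takes: the paper's own proof is the one-line remark that the result ``follows from Lemma~\ref{lem:roc} and Lemma~\ref{lem:kjd},'' which is precisely your chaining of $\rho_a(\theta,\omega)\le\varrho_{A}(\theta,\omega,\delta_a)^{\delta_a}$ with the root bound $\varrho_{A}(\theta,\omega,\delta_a)\le u(\theta,\omega,\delta_a)$. Your added care about the monotonicity of $x\mapsto x^{\delta_a}$ on $[0,1]$ and the role of $K_1+K_2<1$ merely makes explicit what the paper leaves implicit.
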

\begin{proof}
The proof follows from Lemma \ref{lem:roc} and Lemma \ref{lem:kjd}.
\end{proof}

From \eqref{upper_bound} we have
\begin{equation} \label{ubrho}
    \rho_a(\theta,\omega) \leq \left(\frac{K_2+\frac{1}{\delta_a}}{1-K_1+\frac{1}{\delta_a}}\right).
\end{equation}
Let $\rho_{a_{\rm opt}}$ denote the optimal rate of convergence for the asynchronous SGD algorithm. Then
\begin{equation}\label{ub rho opt}
    \rho_{a_{opt}}\leq \min_{\theta,\omega}\left(\frac{K_2+\frac{1}{\delta_a}}{1-K_1+\frac{1}{\delta_a}}\right).
\end{equation}

\subsubsection{The iteration complexity}\label{sec:iter_complexity}
Let $\chi_a(\theta,\omega,\delta_a)$ denote the iteration complexity of the asynchronous SGD for some choice of $\theta$ and $\omega$, where 1 iteration denotes one unit time interval.~This ensures a fair comparison between synchronous and asynchronous SGD as synchronous parallel SGD performs single update of the parameter $x_t$ in a unit time interval. 
\begin{remark}
For simplicity, we are showing the iteration complexity results without the ``big Oh" ($\cO$) notation in this and all the subsequent sections. 
\end{remark}
 Then
\begin{align*}
    \chi_a(\theta,\omega,\delta_a) = \frac{1}{1-\rho_a(\theta,\omega)}.
\end{align*}
Notice that, for iteration complexity, the above expression holds only when $\rho_a(\theta,\omega)<1$.
Let $\chi_{a_{opt}}(\delta_a)$ denote the best possible iteration complexity of the asynchronous algorithm, that is,
\begin{equation}\label{eq:paos}
    \chi_{a_{opt}}(\delta_a)=\min_{\theta,\omega}\chi_a(\theta,\omega,\delta_a)
\end{equation}
which implies
\begin{equation}
    \chi_{a_{opt}}(\delta_a)=\frac{1}{1-\rho_{a_{opt}}}.
\end{equation}
From \eqref{ub rho opt} we have
\begin{equation}
    \chi_{a_{opt}}(\delta_a) \leq \min_{\theta,\omega}\left(\frac{1-K_1+\frac{1}{\delta_a}}{1-K_1-K_2}\right).
\end{equation}
Denote
\begin{equation}\label{defU}
    U(\theta,\omega)= \frac{1-K_1+\frac{1}{\delta_a}}{1-K_1-K_2}.
\end{equation}
For an arbitrary choice of $\theta$ and $\omega$, $U(\theta,\omega)$ denotes an upper bound on the iteration complexity of the asynchronous SGD with $\theta$ and $\omega$ set as the parameters of the algorithm. One should again note that the above expression for $U(\theta,\omega)$ is feasible only when $K_1+K_2<1$. In the next section, we show that we can find optimal ($\theta$, $\omega$) for \textbf{Case 1}, and a good combination of ($\theta$, $\omega$) for \textbf{Case 2}. 
Therefore,
\begin{equation}
    \chi_{a_{opt}}(\delta_a) \leq \min_{\theta,\omega}U(\theta,\omega).
\end{equation}

\begin{remark}\label{rem:chs}
In this paper, our goal is to compare between $\chi_{s_{opt}}(\tau)$ (as defined in Theorem \ref{minibatch}) and $\chi_{a_{opt}}(\delta_a)$.
\end{remark}

\subsubsection{The optimal \texorpdfstring{$\omega$}{Lg}}
\begin{lemma}\label{rem:oqx}
Assume that we have two choices of $\omega$, say, $\omega_1$ and $\omega_2$, such that for all $\theta\in[0,1]$, both $K_1$ and $K_2$ are smaller for $\omega_1$ than $\omega_2$. Then $\omega_1$ is a better choice than $\omega_2$.
\end{lemma}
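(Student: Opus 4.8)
The plan is to exploit the explicit form of the iteration-complexity surrogate $U(\theta,\omega)$ in \eqref{defU} and to show that it is \emph{monotonically increasing} in each of the two quantities $K_1$ and $K_2$ over the feasible region $K_1+K_2<1$. Once this monotonicity is established, the hypothesis that $\omega_1$ produces smaller values of both $K_1$ and $K_2$ than $\omega_2$ (uniformly in $\theta$) forces $U(\theta,\omega_1)\le U(\theta,\omega_2)$ pointwise in $\theta$, and hence the minimum over $\theta$ is also smaller for $\omega_1$.

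First I would record the feasibility constraint: by the remark following Theorem \ref{conv_strong}, the surrogate $U$ is meaningful only when $K_1+K_2<1$, so the denominator $1-K_1-K_2$ is strictly positive; the numerator $1-K_1+\tfrac{1}{\delta_a}$ is positive as well, since $K_1<1$ and $\delta_a>0$. Next I would treat $U$ as a function of the two real variables $K_1,K_2$ and verify monotonicity. For $K_2$ this is immediate: the numerator does not depend on $K_2$, while the denominator strictly decreases as $K_2$ grows, so the positive fraction $U$ strictly increases in $K_2$. For $K_1$ I would compute
\[
\frac{\partial U}{\partial K_1}=\frac{-(1-K_1-K_2)+\left(1-K_1+\tfrac{1}{\delta_a}\right)}{(1-K_1-K_2)^2}=\frac{K_2+\tfrac{1}{\delta_a}}{(1-K_1-K_2)^2}>0,
\]
so $U$ is also strictly increasing in $K_1$.

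With both partial monotonicities in hand, the conclusion is a short consequence of the hypothesis. Writing $K_j^{(i)}=K_j(\theta,\omega_i)$, the assumption gives $K_1^{(1)}\le K_1^{(2)}$ and $K_2^{(1)}\le K_2^{(2)}$ for every $\theta\in[0,1]$. Increasing $K_1$ first (holding $K_2$ at $K_2^{(1)}$) and then increasing $K_2$ yields $U(\theta,\omega_1)\le U(\theta,\omega_2)$ for all $\theta$. Taking the minimum over $\theta$ on both sides gives $\min_\theta U(\theta,\omega_1)\le\min_\theta U(\theta,\omega_2)$, and since the asynchronous iteration complexity is bounded by $\min_\theta U(\theta,\omega)$, the choice $\omega_1$ delivers the smaller complexity bound and is therefore the better choice.

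The only point that needs care—more of a bookkeeping requirement than a genuine obstacle—is staying inside the feasible region $K_1+K_2<1$, so that the denominators are positive and the signs of the partials are valid. In particular, when applying the two monotonicities in sequence one passes through the intermediate point $(K_1^{(2)},K_2^{(1)})$, and one must check it remains feasible; this holds because it is dominated coordinatewise by $(K_1^{(2)},K_2^{(2)})$, which is a feasible point associated with $\omega_2$, so $K_1^{(2)}+K_2^{(1)}\le K_1^{(2)}+K_2^{(2)}<1$.
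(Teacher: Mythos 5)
Your proposal is correct, and its core idea—monotonicity of $U$ in $(K_1,K_2)$ over the feasible region $K_1+K_2<1$—is exactly the second half of the paper's own argument; your partial-derivative computation $\partial U/\partial K_1=\bigl(K_2+\tfrac{1}{\delta_a}\bigr)/(1-K_1-K_2)^2>0$ checks out, and you actually supply details the paper only asserts (the explicit partials and the coordinatewise feasibility of the intermediate point $(K_1^{(2)},K_2^{(1)})$). The one thing the paper does that you omit: it first invokes the recursion \eqref{rec3} to conclude $\rho_a(\theta,\omega_1)\le\rho_a(\theta,\omega_2)$ for all $\theta$, i.e.\ the convergence rate itself (not merely the complexity surrogate $U$) is monotone in the coefficients, which follows because enlarging the nonnegative coefficients of $q_{t+1}=K_1 q_t+K_2 q_{t-\delta}$ enlarges the iterates by induction (equivalently, enlarges the Perron root of the associated nonnegative state-transition matrix, by Theorem \ref{Cauchy} or \ref{PF Theorem}). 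Your version establishes "better" only in the sense of the upper bound $U$; since everything downstream (Lemma \ref{lem:sjk} and the boundary analysis that follows) optimizes $U$, this suffices for how the lemma is used, but the paper's rate comparison is the stronger and more intrinsic statement, and adding one sentence along those lines would make your proof match the lemma's informal conclusion in full.
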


\begin{proof}
See Appendix \ref{sec:proofs}. 
\end{proof}

\begin{lemma}\label{lem:sjk}
The optimal $\omega$ for all $\theta$ in $[0,1]$ in both the cases (See Theorem \ref{recurrence}) lies in the range $[1,\omega^\star]$.
\end{lemma}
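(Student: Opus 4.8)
The plan is to invoke Lemma~\ref{rem:oqx}: it suffices to show that any $\omega\notin[1,\omega^\star]$ is dominated, simultaneously for every $\theta\in[0,1]$, by some $\omega'\in[1,\omega^\star]$, meaning that both $K_1$ and $K_2$ are no larger at $\omega'$ than at $\omega$. First observe that the interval is nonempty: since all eigenvalues satisfy $\lambda_i\le 1$, we have $\lambda_{\rm min}^++\lambda_{\rm max}\le 2$, so $\omega^\star=2/(\lambda_{\rm min}^++\lambda_{\rm max})\ge 1$ in both Case~1 and Case~2.

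The argument rests on two monotonicity facts. Writing $h(\omega)\eqdef\omega(2-\omega)$ for the quantity entering $K_2$ through $1-\omega(2-\omega)\lambda$, the map $h$ is an inverted parabola with peak at $\omega=1$; thus $h$ increases on $[0,1]$, decreases on $[1,\infty)$, and $h(\omega)\le 0$ once $\omega\ge 2$. By its definition $\alpha(\omega)$ is V-shaped with unique minimum at $\omega^\star$, equal to $1-\omega\lambda_{\rm min}^+$ (decreasing) on $[0,\omega^\star]$ and to $\omega\lambda_{\rm max}-1$ (increasing) on $[\omega^\star,\infty)$. Since $K_1=(1-\theta)(1-\theta+\theta\alpha(\omega))$ is nondecreasing in $\alpha(\omega)$ for every $\theta\in[0,1]$, bounding $K_1$ reduces to bounding $\alpha$.

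First I would rule out $\omega<1$ by comparing against $\omega'=1$. On $[\omega,1]\subseteq[0,\omega^\star]$ the function $\alpha$ is decreasing and the eigenvalue entering $K_2$ is $\lambda_{\rm min}^+$ throughout, so $\alpha(1)\le\alpha(\omega)$ yields $K_1(\theta,1)\le K_1(\theta,\omega)$, while $h(1)=1\ge h(\omega)$ and $\alpha(1)\le\alpha(\omega)$ together make both the $\theta^2$-term and the $\theta(1-\theta)$-term of $K_2$ drop, so $K_2(\theta,1)\le K_2(\theta,\omega)$. By Lemma~\ref{rem:oqx} the optimum cannot lie below $1$. Symmetrically, for $\omega>\omega^\star$ I would compare with $\omega'=\omega^\star$: minimality of $\alpha$ at $\omega^\star$ gives $K_1(\theta,\omega^\star)\le K_1(\theta,\omega)$, and $\omega>\omega^\star\ge 1$ forces $h(\omega)\le h(\omega^\star)$, pushing the $K_2$ terms in favour of $\omega^\star$ as well.

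The main obstacle is the $K_2$ bookkeeping for $\omega>\omega^\star$ when the eigenvalue in the $\theta^2$-term switches from $\lambda_{\rm min}^+$ at $\omega^\star$ to $\lambda_{\rm max}$ at $\omega$; this happens only in Case~1(iii), where $\omega\ge 2$. There, rather than comparing the two $\theta^2$-terms directly, I would exploit that $\omega\ge 2$ makes $h(\omega)\le 0$, so that $1-h(\omega)\lambda_{\rm max}\ge 1\ge 1-h(\omega^\star)\lambda_{\rm min}^+$ and the term remains larger at $\omega$ despite the switch. In all remaining subcases the same eigenvalue appears at both comparison points---notably in Case~2 it is $\lambda_{\rm max}$ at $\omega^\star$ and at $\omega$ alike---so the monotonicity of $h$ on $[1,\infty)$ settles the $\theta^2$-term at once. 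Combining these dominance relations confines the minimizer of $U(\theta,\omega)$, for every fixed $\theta\in[0,1]$, to $[1,\omega^\star]$, which is the assertion.
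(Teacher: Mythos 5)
Your proposal is correct and follows essentially the same route as the paper: both arguments reduce the claim to Lemma~\ref{rem:oqx} and use the monotonicity of $\alpha(\omega)$ together with that of $\omega(2-\omega)$ to show that $\omega=1$ dominates every $\omega<1$ and $\omega^\star$ dominates every $\omega>\omega^\star$, for all $\theta\in[0,1]$. The only cosmetic difference is that the paper establishes piecewise monotonicity of $K_1,K_2$ on $[0,1]$, $[\omega^\star,2]$ and $[2,\infty)$ (the two pieces of $K_2$ agreeing at $\omega=2$ since $1-\omega(2-\omega)\lambda$ equals $1$ there), whereas you compare endpoints directly and handle the eigenvalue switch in the $\theta^2$-term at $\omega=2$ explicitly via the sign of $\omega(2-\omega)$ --- a slightly more explicit treatment of the same subtlety.
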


\begin{proof}
See Appendix \ref{sec:proofs}. 
\end{proof}

We now try to find the minimum value of the upper bound $U(\theta,\omega)$ for $\theta\in[0,1]$ and $\omega\in[1,\omega^\star]$.

\begin{figure}
    \centering
    \begin{subfigure}{0.49\textwidth}
    \includegraphics[width=\textwidth]{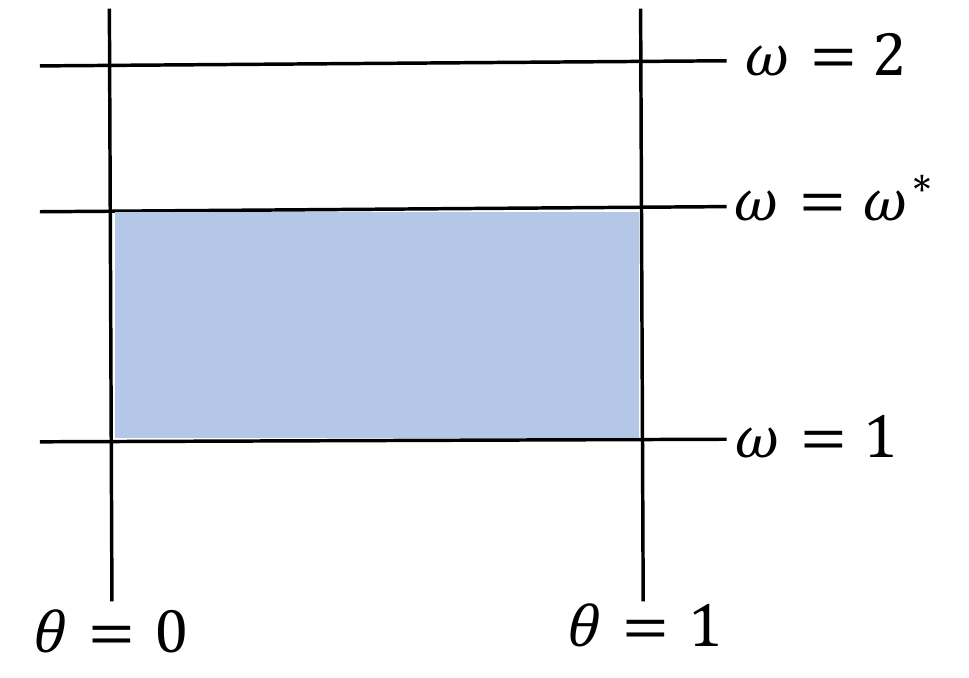}
        \caption{}
      \label{1a}
      \end{subfigure}
    \begin{subfigure}{0.49\textwidth}
    \includegraphics[width = \textwidth]{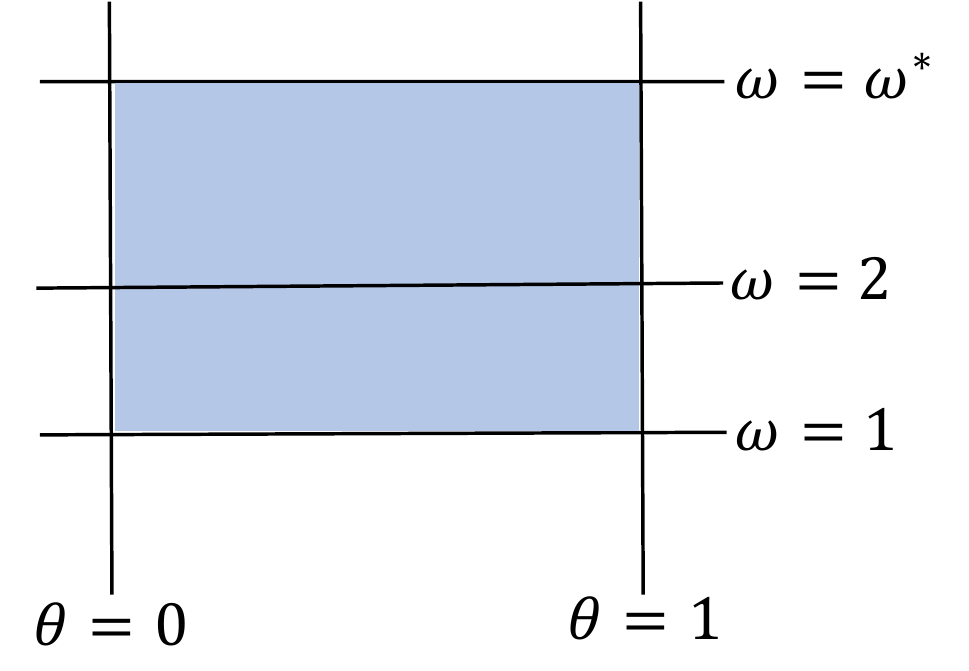}
    \caption{}   
    \label{1b}
  \end{subfigure} 
  \vspace{-15pt}
 \caption{\small{The region of interest $(\theta, \omega)=[0,1]\times [1,\omega^\star] $ is shaded-- (a) Case 1: $\omega^\star\leq 2$, that is, $\lambda^+_{\min}+\lambda_{\max} \geq 1$, (b) Case 2: $\omega^\star\geq 2$, that is, $\lambda^+_{\min}+\lambda_{\max} \leq 1$.}}
    \label{roi}
\end{figure}

\subsubsection{Upper bounding the condition number for Case 1}
As proved in Lemma \ref{lem:sjk}, we only need to focus in the region $\omega\in[1,\omega^\star]$. Please see Figure \ref{1a}. From \eqref{scenario_a_1} we have:

\begin{equation}\label{intitial_interval}
   K_1(\theta,\omega) \eqdef  (1-\theta)^2 + \theta(1-\theta)(1-\omega\lambda^+_{\min}), \qquad K_2(\theta,\omega) \eqdef \theta^2 \left(1-\omega(2-\omega)\lambda_{\min}^+\right) + \theta(1-\theta)(1-\omega\lambda^+_{\min}) .
\end{equation}
Substituting the values of $K_1$ and $K_2$ in \eqref{defU}, we have
\begin{equation}\label{ub_initial_interval}
      U(\theta,\omega)=\frac{\theta\left(1+\left(1-\theta\right)\omega\lambda^+_{\min}\right)+\frac{1}{\delta_a}}{\theta\omega\left(2-\theta\omega\right)\lambda^+_{\min}}.
\end{equation}
Now we compute $\nabla U$ and find:
\begin{equation}
     \frac{\partial U}{\partial\theta}=\frac{\theta^2\omega\delta_a\left(1+\left(\omega-2\right)\lambda^+_{\min}\right)+2\left(\theta\omega-1\right)}
    {\theta^2\omega\delta_a\left(2-\theta\omega\right)^2\lambda^+_{\min}}.
\end{equation}
Setting  $ \frac{\partial U}{\partial\theta}=0$ we have
\begin{align}
   \theta^2\omega\delta_a\left(1+\left(\omega-2\right)\lambda^+_{\min}\right)+2\left(\theta\omega-1\right)=0,
 \end{align}
which implies 
\begin{align}
  \theta^2\left(\omega\delta_a+\omega^2\delta_a\lambda^+_{\min}-2\omega\delta_a\lambda^+_{\min}\right)+ \theta\left(2\omega\right)-2=0. \label{partial_U_theta_zero}
\end{align}
Similarly,
\begin{equation}
       \frac{\partial U}{\partial\omega}=\frac{2\delta_a\theta\left(\theta\omega-1\right)+\delta_a\theta^2\omega^2\left(1-\theta\right)\lambda^+_{\min}+2\left(\theta\omega-1\right)}
    {\theta\omega^2\delta_a\left(2-\theta\omega\right)^2\lambda^+_{\min}},
\end{equation}
and setting $ \frac{\partial U}{\partial\omega}=0$ gives
\begin{align}
    2\delta_a\theta\left(\theta\omega-1\right)+\delta_a\theta^2\omega^2\left(1-\theta\right)\lambda^+_{\min}+2\left(\theta\omega-1\right)=0,
    \end{align}
 which implies 
 \begin{eqnarray}
   \theta^3\left(-\omega^2\delta_a\lambda^+_{\min}\right)+ \theta^2\left(\omega^2\delta_a\lambda^+_{\min}+2\omega\delta_a\right)+\theta\left(2\omega-2\delta_a\right)-2=0.
    \label{partial_U_omega_zero}
    \end{eqnarray}
For the gradient $\nabla U$ to vanish inside the region of interest ($\omega\in(1,\omega^\star)$ and $\theta\in(0,1)$), we want both equations (\ref{partial_U_theta_zero}) as well as (\ref{partial_U_omega_zero}) to hold. We try to find out the solutions of:  $(\ref{partial_U_theta_zero})-(\ref{partial_U_omega_zero})=0$ as any solution satisfying both equations (\ref{partial_U_theta_zero}) and (\ref{partial_U_omega_zero}) should also satisfy $(\ref{partial_U_theta_zero})-(\ref{partial_U_omega_zero})=0$.
Therefore, we have
\begin{align}
    (\ref{partial_U_theta_zero})-(\ref{partial_U_omega_zero}) &\ = \theta^3\left(\omega^2\delta_a\lambda^+_{\min}\right)+ \theta^2\left(-2\omega\delta_a\lambda^+_{\min}-\omega\delta_a\right)+2\theta\delta_a\\
    &\ = \delta_a\theta\underbrace{\left(\lambda^+_{\min}\theta^2\omega^2-\left(2\lambda^+_{\min}+1\right)\theta\omega+2\right)}_{\text{A quadratic in }\theta\omega}.
\end{align}
One solution of $(\ref{partial_U_theta_zero})-(\ref{partial_U_omega_zero})=0$ from above is $\theta=0$, which does not lie inside the region of interest. Next we focus on the quadratic:
\begin{align}
      \lambda^+_{\min}\theta^2\omega^2-\left(2\lambda^+_{\min}+1\right)\theta\omega+2 &\ =0,
    \end{align}
    that is,
   \begin{align}
     \lambda^+_{\min}\left(\theta\omega-\frac{1}{\lambda^+_{\min}}\right)\left(\theta\omega-2\right) &\ =0,
\end{align}
which gives the solution $\theta\omega=\frac{1}{\lambda^+_{\min}}$ and $\theta\omega=2$. The maximum value of $\theta\omega$ attainable in the region of interest is $\omega^\star$, for $\theta=1$ and $\omega=\omega^\star$. Hence $\theta\omega=\frac{1}{\lambda^+_{\min}}$ is not attainable inside the region of interest. Also, as we are dealing with Case 1, $\omega^\star\leq2$, hence $\theta\omega=2$ is also not attainable inside the region of interest. Thus,$\nabla U$ does not vanish inside the region of interest.  Therefore, the minima lies on the boundary of the region. We now discuss following four boundary cases:

\subsubsection{\texorpdfstring{$\theta=0$}{Lg}}
$U$ is not defined. 

\subsubsection{\texorpdfstring{$\theta=1$}{Lg}}
We have
\begin{align*}
      U(1,\omega)=\underbrace{\frac{1+\frac{1}{\delta_a}}{\omega(2-\omega)\lambda^+_{\min}}}_{\text{minimized at }\omega=1}\implies U(1,1)=\frac{1+\frac{1}{\delta_a}}{\lambda^+_{\min}}.
\end{align*}
We note that $U(1,\omega)>\frac{1}{\lambda^+_{\min}}$ for all $\delta_a$.
\subsubsection{\texorpdfstring{$\omega=1$}{Lg}}
Let $\theta_{1}= \underset{\theta}{\mathrm{argmin\hspace{0.1em}}}U(\theta,1)$. Then $\theta_1$ is the solution of equation (\ref{partial_U_theta_zero}) at $\omega=1$. Substituting $\omega=1$ in (\ref{partial_U_theta_zero}) we get the following quadratic in $\theta$
\begin{equation}
     \theta^2\delta_a(1-\lambda^+_{\min}) +2\theta-2=0. 
\end{equation}
As we want $\theta$ in $[0,1]$ and find
\begin{equation}
      \theta_1= \frac{\sqrt{1+2\delta_a(1-\lambda_{\min}^+)}-1}{\delta_a(1-\lambda_{\rm min}^+)}.
\end{equation}
Therefore,
\begin{equation}
      U(\theta_1,1)=\frac{\frac{3}{4}+\frac{1+\sqrt{1+2\delta_a(1-\lambda_{\min}^+)}}{4\delta_a}}{\lambda_{\rm min}^+}.
\end{equation}
Note that for $\delta_a\geq4$, $U(\theta_1,1)\leq \frac{1}{\lambda^+_{\min}}$. Thus, when $c\tau\geq4$, the asynchronous parallel method performs better than the basic method for Case 1.
\subsubsection{\texorpdfstring{$\omega=\omega^\star$}{Lg}}
Let $\theta_{\omega^\star}= \underset{\theta}{\mathrm{argmin\hspace{0.1em}}}U(\theta,\omega^\star)$. Then we get $\theta_{\omega^\star}$ as:
\begin{equation}
      \theta_{\omega^\star}=\frac{k(\sqrt{1+\delta_a(2-k)}-1)}{\delta_a(2-k)}\qquad (k=\lambda_{\min}^+ +\lambda_{\max}).
\end{equation}
Therefore,
\begin{eqnarray}
      U(\theta_{\omega^\star},\omega^\star) &=& \frac{\frac{k}{4}+\frac{\lambda_{\min}^+}{2}+\frac{\sqrt{1+\delta_a(2-k)}+2(\delta_a+1+\delta_a(1-k)\lambda_{\min}^+)}{2\delta_a\sqrt{1+\delta_a(2-k)}}}{\lambda_{\min}^+}\\
    &=& \frac{\frac{3\lambda_{\min}^++\lambda_{\max}}{4}+\frac{\sqrt{1+\delta_a(2-k)}+2(\delta_a+1+\delta_a(1-k)\lambda_{\min}^+)}{2\delta_a\sqrt{1+\delta_a(2-k)}}}{\lambda_{\min}^+}.\label{eq:ncj}
\end{eqnarray}
\begin{lemma}\label{lem:mnv}
For Case 1, we get the iteration complexity
\begin{equation}
   \chi_{a_{opt}}(\delta_a) \leq \min\left(U(\theta_1,1),U(\theta_{\omega^\star},\omega^\star)\right).
\end{equation}
\end{lemma}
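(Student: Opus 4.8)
The plan is to turn the already-established bound $\chi_{a_{opt}}(\delta_a)\leq\min_{\theta,\omega}U(\theta,\omega)$ into the stated estimate by minimizing $U$ explicitly over the relevant region. By Lemma~\ref{lem:sjk} the optimal $\omega$ lies in $[1,\omega^\star]$, so it suffices to minimize $U(\theta,\omega)$ over the rectangle $[0,1]\times[1,\omega^\star]$ (the shaded region in Figure~\ref{1a}). I would show that this minimum is attained on the boundary and that, among the four boundary edges, only $\omega=1$ and $\omega=\omega^\star$ contribute, with respective minimizers $\theta_1$ and $\theta_{\omega^\star}$.

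The first step is to rule out interior minima, which is exactly the gradient computation carried out above: subtracting the two stationarity equations \eqref{partial_U_theta_zero} and \eqref{partial_U_omega_zero} yields the factorization $\delta_a\theta\,\lambda^+_{\min}(\theta\omega-1/\lambda^+_{\min})(\theta\omega-2)=0$, whose nontrivial roots require $\theta\omega\in\{2,\,1/\lambda^+_{\min}\}$. The largest product attainable on the rectangle is $\theta\omega=\omega^\star$ (at $\theta=1,\omega=\omega^\star$), and in Case~1 both $\omega^\star\leq2$ and $\omega^\star\leq1/\lambda^+_{\min}$ hold (the latter because $\lambda^+_{\min}\leq\lambda_{\max}$). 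Hence neither value of $\theta\omega$ is attained in the interior, so $\nabla U$ never vanishes there and the minimum must lie on the boundary.

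Next I would analyze the boundary edge by edge. On $\theta=0$ both $1-K_1$ and $K_2$ vanish, so $U\to+\infty$ and this edge contributes no minimum. On $\theta=1$ we have $U(1,\omega)=(1+1/\delta_a)/(\omega(2-\omega)\lambda^+_{\min})$; since $\omega(2-\omega)$ is decreasing on $[1,\omega^\star]\subseteq[1,2]$, this edge is minimized at its left endpoint, giving the corner value $U(1,1)$. The key reduction is that the corner $(1,1)$ also sits on the edge $\omega=1$, so by the defining property of $\theta_1$ as the minimizer of $U(\cdot,1)$ we get $U(\theta_1,1)\leq U(1,1)$; thus the $\theta=1$ edge is dominated by the $\omega=1$ edge and may be discarded. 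The same argument at the corner $(1,\omega^\star)$, shared with the edge $\omega=\omega^\star$, gives $U(\theta_{\omega^\star},\omega^\star)\leq U(1,\omega^\star)$, confirming there is nothing extra to gain from $\theta=1$.

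It remains only to record that on the two surviving edges the minimizers are $\theta_1$ (for $\omega=1$) and $\theta_{\omega^\star}$ (for $\omega=\omega^\star$), obtained by specializing the stationarity condition~\eqref{partial_U_theta_zero} to $\omega=1$ and $\omega=\omega^\star$, as computed above. Combining the nonvacuous edges yields $\min_{[0,1]\times[1,\omega^\star]}U=\min\big(U(\theta_1,1),U(\theta_{\omega^\star},\omega^\star)\big)$, and chaining with $\chi_{a_{opt}}(\delta_a)\leq\min_{\theta,\omega}U(\theta,\omega)$ gives the lemma. The main obstacle I anticipate is the bookkeeping that shows the $\theta=1$ edge does not produce an independent competitor; this is precisely the shared-corner comparison, which hinges on $\theta_1$ and $\theta_{\omega^\star}$ being genuine minimizers of $U$ along their edges, so that the corner values they dominate cannot be the global minimum.
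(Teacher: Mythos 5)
Your proposal is correct and follows essentially the same route as the paper: the paper's own justification of this lemma is precisely the preceding in-text analysis showing $\nabla U$ cannot vanish inside $(0,1)\times(1,\omega^\star)$ (via the factorization $\lambda^+_{\min}(\theta\omega-1/\lambda^+_{\min})(\theta\omega-2)=0$), followed by the four boundary cases with minimizers $\theta_1$ and $\theta_{\omega^\star}$ on the two nontrivial edges. Your only additions---making explicit that $\omega^\star\leq 1/\lambda^+_{\min}$ follows from $\lambda^+_{\min}\leq\lambda_{\max}$, and the shared-corner argument $U(\theta_1,1)\leq U(1,1)$ to discard the $\theta=1$ edge---are minor clarifications of steps the paper leaves implicit, not a different method.
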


\subsubsection{Upper bounding the condition number for Case 2}
We split the interval into two parts: $\omega\in[1,2]$ and $\omega\in[2,\omega^\star]$. Please see Figure \ref{1b}.

\subsubsection{Part 1: \texorpdfstring{$\omega\in[1,2]$}{Lg}}
$K_1(\theta,\omega)$ and $K_1(\theta,\omega)$ are the same as described in equation \eqref{intitial_interval}. Similar to the Case 1, we find that $\nabla U$ does not vanish inside the region $\theta\in(0,1)$ and $\omega\in(1,2)$.
Therefore, the minima lies on the boundary of the region.
We now discuss four boundary cases:

\paragraph{\texorpdfstring{$\theta=0$}{Lg}}
$U$ is not defined. 

\paragraph{\texorpdfstring{$\theta=1$}{Lg}}
We have
\begin{align*}
      U(1,\omega)=\underbrace{\frac{1+\frac{1}{\delta_a}}{\omega(2-\omega)\lambda^+_{\min}}}_{\text{minimized at }\omega=1}\implies U(1,1)=\frac{1+\frac{1}{\delta_a}}{\lambda^+_{\min}}.
\end{align*}
We note that $U(1,\omega)>\frac{1}{\lambda^+_{\min}}$ for all $\delta_a$.
\paragraph{\texorpdfstring{$\omega=1$}{Lg}}
Let $\theta_{1}= \underset{\theta}{\mathrm{argmin\hspace{0.1em}}}U(\theta,1)$. Then we get $\theta_{1}$ as:
\begin{equation}
      \theta_1= \frac{\sqrt{1+2\delta_a(1-\lambda_{\min}^+)}-1}{\delta_a(1-\lambda_{\min}^+)}.
\end{equation}
Therefore,
\begin{equation}
      U(\theta_1,1)=\frac{\frac{3}{4}+\frac{1+\sqrt{1+2\delta_a(1-\lambda_{\min}^+)}}{4\delta_a}}{\lambda_{\min}^+}.
\end{equation}
We note that for $\delta_a\geq4$, $U(\theta_1,1)\leq\frac{1}{\lambda^+_{\min}}$.

\paragraph{\texorpdfstring{$\omega=2$}{Lg}}
Let $\theta_{2}= \underset{\theta}{\mathrm{argmin\hspace{0.1em}}}U(\theta,2)$. Then we get $\theta_{2}$ as:
\begin{equation}
    \theta_2=\frac{\sqrt{\delta_a+1}-1}{\delta_a}.
\end{equation}
Therefore,
\begin{equation}\label{eq:opm}
      U(\theta_2,2)=\frac{\frac{1}{4}+\frac{\lambda_{\min}^+}{2}+\frac{1+\sqrt{\delta_a+1}}{2\delta_a}}{\lambda_{\min}^+}.
\end{equation}
We note that  for $\delta_a\geq3$, $U(\theta_2,2)\leq\frac{1}{\lambda^+_{\min}}$. Thus, for $c\tau\geq3$, the asynchronous parallel method performs better than the basic method for Case 2.
\begin{lemma}\label{lem:ysv}
For Case 2, 
\begin{equation}
     U(\theta_2,2)\leq U(\theta_1,1)\leq U(1,1) \quad \forall \delta_a\geq4.
\end{equation}
\end{lemma}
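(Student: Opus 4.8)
The plan is to factor out the common positive constant and reduce everything to a comparison of numerators. Since each of $U(\theta_2,2)$, $U(\theta_1,1)$, and $U(1,1)$ is divided by $\lambda_{\min}^+>0$, the asserted chain is equivalent to $N_2\le N_1\le N_0$, where
\[
N_0 = 1+\tfrac{1}{\delta_a},\qquad
N_1 = \tfrac34+\tfrac{1+\sqrt{1+2\delta_a(1-\lambda_{\min}^+)}}{4\delta_a},\qquad
N_2 = \tfrac14+\tfrac{\lambda_{\min}^+}{2}+\tfrac{1+\sqrt{\delta_a+1}}{2\delta_a}.
\]
Before touching these, I would record the structural fact that in \textbf{Case 2} we have $\lambda_{\min}^+\le \tfrac12$: indeed $\lambda_{\max}\ge\lambda_{\min}^+$, so $2\lambda_{\min}^+\le\lambda_{\min}^++\lambda_{\max}\le 1$. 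This single inequality is what makes the argument close, so I would isolate it up front.

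For the right inequality $N_1\le N_0$, I would clear denominators by multiplying through by $4\delta_a$, which turns the claim into $\sqrt{1+2\delta_a(1-\lambda_{\min}^+)}\le \delta_a+3$. Both sides are nonnegative, so squaring is legitimate and the claim reduces to $0\le \delta_a^2+4\delta_a+8+2\delta_a\lambda_{\min}^+$, which holds for every $\delta_a>0$ and $\lambda_{\min}^+\ge0$. Thus $N_1\le N_0$ requires no restriction beyond positivity, and in particular holds for all $\delta_a\ge 4$.

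For the left inequality $N_2\le N_1$, I would again multiply by $4\delta_a$ and move the radicals to one side, arriving at $2\sqrt{\delta_a+1}-\sqrt{1+2\delta_a(1-\lambda_{\min}^+)}\le 2\delta_a(1-\lambda_{\min}^+)-1$. The decisive step is the substitution $s\eqdef 2\delta_a(1-\lambda_{\min}^+)$, which recasts the goal as $2\sqrt{\delta_a+1}\le s-1+\sqrt{1+s}$. The Case~2 bound $\lambda_{\min}^+\le\tfrac12$ yields $s\ge\delta_a$, and since the right-hand side is increasing in $s$, it suffices to verify the inequality at the worst case $s=\delta_a$. There it collapses to $\sqrt{\delta_a+1}\le\delta_a-1$, and squaring (valid once $\delta_a\ge1$ so the right side is nonnegative) gives $\delta_a\ge3$. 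Hence $N_2\le N_1$ for all $\delta_a\ge3$, covering $\delta_a\ge4$.

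The main obstacle is that the two radicals in $N_1$ and $N_2$ have different arguments, $\sqrt{1+2\delta_a(1-\lambda_{\min}^+)}$ versus $\sqrt{\delta_a+1}$, so a direct common-denominator comparison leaves a genuinely two-variable (in $\delta_a$ and $\lambda_{\min}^+$) inequality with nested square roots that resists squaring. The insight that dissolves this is the reparametrization $s=2\delta_a(1-\lambda_{\min}^+)$ together with the Case~2 estimate $\lambda_{\min}^+\le\tfrac12$: the former eliminates $\lambda_{\min}^+$ from the quantity to be bounded, while the latter supplies $s\ge\delta_a$ and thereby a monotonicity-driven reduction to a clean single-variable inequality in $\delta_a$.
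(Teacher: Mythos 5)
Your proposal is correct, and both inequalities are verified with valid algebra; it is worth comparing it with the paper's much terser two-sentence argument. For the harder, left inequality $U(\theta_2,2)\le U(\theta_1,1)$ you use exactly the same key fact as the paper---in Case 2 one has $\lambda_{\min}^+\le\tfrac12$---but where the paper merely asserts that the inequality ``follows'' from this fact, you supply the actual mechanism: the substitution $s=2\delta_a(1-\lambda_{\min}^+)$, the resulting bound $s\ge\delta_a$, and the monotonicity of $s\mapsto s-1+\sqrt{1+s}$, which collapses the two-radical comparison to $\sqrt{\delta_a+1}\le\delta_a-1$, i.e.\ $\delta_a\ge3$. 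For the right inequality $U(\theta_1,1)\le U(1,1)$ your route genuinely differs from the paper's: the paper sandwiches through the basic method's complexity, using that $U(1,1)>\tfrac{1}{\lambda_{\min}^+}$ for all $\delta_a$ while $U(\theta_1,1)\le\tfrac{1}{\lambda_{\min}^+}$ for $\delta_a\ge4$ (this threshold is precisely where the hypothesis $\delta_a\ge4$ enters), whereas you compare numerators directly and obtain the inequality for every $\delta_a>0$ with no restriction at all. Your version is therefore slightly stronger and self-contained, while the paper's recycles bounds it had already established and makes the comparison with the basic method explicit. (Both arguments, incidentally, overlook the one-line alternative: since the paper defines $\theta_1$ as the minimizer of $U(\cdot,1)$ over $\theta$, the inequality $U(\theta_1,1)\le U(1,1)$ holds by definition.)
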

\begin{proof}
See Appendix \ref{sec:proofs}. 
\end{proof}
Thus, in Case 2, the optimal stepsize is $\omega=2$ for $\omega\in[1,2]$. This leads us to the following result:
\begin{lemma}\label{lem:ysv2}
For Case 2, the iteration complexity
\begin{equation}
    \chi_{a_{opt}}(\delta_a) \leq \frac{\frac{1}{4}+\frac{\lambda_{\min}^+}{2}+\frac{1+\sqrt{\delta_a+1}}{2\delta_a}}{\lambda_{\min}^+}.
\end{equation}
\end{lemma}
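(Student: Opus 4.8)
The plan is to leverage the fact that $\chi_{a_{opt}}(\delta_a)$ is defined through a minimization over the hyperparameters, so that $\chi_{a_{opt}}(\delta_a)\leq\min_{\theta,\omega}U(\theta,\omega)\leq U(\theta,\omega)$ for \emph{any} single admissible pair $(\theta,\omega)$. Hence, to establish the claimed upper bound it suffices to exhibit one feasible choice whose value of $U$ equals the right-hand side. By Lemma \ref{lem:sjk} the optimal $\omega$ lies in $[1,\omega^\star]$, and Lemma \ref{lem:ysv} already identifies $\omega=2$ as the best stepsize over the subinterval $[1,2]$; I would therefore evaluate $U$ at $\omega=2$, which is admissible in Case 2 precisely because $\omega^\star\geq 2$ places $2$ inside $[1,\omega^\star]$.

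First I would specialize the closed form \eqref{ub_initial_interval} (valid here since the coefficients $K_1,K_2$ over $\omega\in[1,2]$ are those of \eqref{intitial_interval}) to $\omega=2$. Because $\omega(2-\omega)=0$ and $\alpha(2)=1-2\lambda^+_{\min}$ at this point, $U(\theta,2)$ collapses to a function of $\theta$ alone, with denominator $1-K_1-K_2=4\theta(1-\theta)\lambda^+_{\min}$. I would then minimize over $\theta\in(0,1)$ via $\partial U/\partial\theta=0$: setting $\omega=2$ in the stationarity condition \eqref{partial_U_theta_zero} reduces it to the quadratic $\delta_a\theta^2+2\theta-1=0$, whose admissible root is $\theta_2=\tfrac{\sqrt{\delta_a+1}-1}{\delta_a}$.

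Next I would verify feasibility, namely $K_1+K_2<1$ so that $U(\theta_2,2)$ is a finite, legitimate bound. Since the denominator equals $4\theta(1-\theta)\lambda^+_{\min}$, feasibility is equivalent to $\theta_2\in(0,1)$, and this holds for every $\delta_a>0$ because $1<\sqrt{\delta_a+1}<\delta_a+1$. Substituting $\theta_2$ back into $U(\theta,2)$ reproduces the explicit value recorded in \eqref{eq:opm}, and chaining this with $\chi_{a_{opt}}(\delta_a)\leq U(\theta_2,2)$ delivers the stated inequality.

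There is no conceptually hard step: the lemma is essentially a packaging of the preceding boundary analysis, and the substantive work---locating $\theta_2$ and showing $\omega=2$ beats the other boundary candidates---was carried out en route to \eqref{eq:opm} and in Lemma \ref{lem:ysv}. The only point demanding care is the algebraic simplification that turns $U(\theta_2,2)$, obtained by inserting the radical expression for $\theta_2$ into a rational function, into the tidy form \eqref{eq:opm}; I would check that simplification term by term. In particular, because the bound is derived from a single admissible pair, no separate treatment of the remaining subinterval $\omega\in[2,\omega^\star]$ is required for this upper bound.
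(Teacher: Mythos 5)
Your proof is correct and follows essentially the same route as the paper: both evaluate the upper bound $U$ at the feasible pair $(\theta_2,2)$ obtained from the stationarity condition \eqref{partial_U_theta_zero} at $\omega=2$, recover the value \eqref{eq:opm}, and chain this with $\chi_{a_{opt}}(\delta_a)\leq\min_{\theta,\omega}U(\theta,\omega)\leq U(\theta_2,2)$. Your explicit remark that a single admissible pair suffices for an upper bound---so the unresolved subinterval $\omega\in[2,\omega^\star]$ needs no treatment---is precisely the logic the paper relies on implicitly when it states the lemma despite admitting it could not optimize over that subinterval.
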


\subsubsection{Part 2: \texorpdfstring{$\omega\in[2,\omega^\star]$}{Lg}}

\begin{table}
\centering
\begin{tabular}{|c |c |c |c |c |c |}     %{l l l l l l}
 \hline
$\lambda_{\min}^+$ &    $\lambda_{\max}$  & $ c$   &  $k=\lambda_{\min}^++\lambda_{\max}$   & $\kappa$ &   $\tau$ \\
   \hline
   $10^{-1}$   &    0.9  &    1  &  1  &       9  &     5 \\
    \hline
      $10^{-2}$  &    0.99  &    1   & 1   &     99  &     4 \\
      \hline
     $10^{-3}$  &   0.999  &    1  &  1    &   999 &      4 \\
     \hline
    $10^{-4}$  &  0.9999  &    1  &  1  &    9999 &      4 \\
    \hline
       $2\times 10^{-1}$   &    0.8   &   1  &  1    &     4  &     7 \\
        \specialrule{.2em}{.1em}{.1em} 
     $10^{-1}$ &   0.9  &  1.5  &  1    &    9   &     3 \\
     \hline
    $10^{-2}$  &  0.99   & 1.5  &  1    &   99    &    3 \\
     \hline
	 $10^{-3}$  &  0.999   & 1.5  &  1    &   999    &    3 \\
     \hline
      $10^{-4}$  &  0.9999   & 1.5  &  1    &   9999    &    3 \\
     \hline
    $2\times 10^{-1}$ &   0.8  &  1.5  &  1     &   4    &    5 \\
     \specialrule{.2em}{.1em}{.1em} 
    $10^{-1}$   &    0.9  &    2  &  1      &     9   &      3 \\
      \hline
      $10^{-2}$  &      0.99   &     2  &    1    &      99    &     2 \\
       \hline
     $10^{-3}$    &   0.999    &    2   &   1      &   999   &      2 \\
      \hline
    $10^{-4}$  &    0.9999   &     2   &   1    &    9999    &     2 \\
     \hline
    $2\times 10^{-1}$     &    0.8   &     2   &   1   &        4    &     4 \\
        \hline
 \end{tabular}
 \caption{$k=\lambda_{\min}^++\lambda_{\max}\ge 1$. Minimum number of processors $\tau$ required for asynchronous SGD to have better iteration complexity than synchronous SGD.}\label{table:case1}
 \end{table}

We were not able to find the optimal stepsize $\theta$ for  Case 2 in  $\omega\in[2,\omega^\star]$. However, the iteration complexity for $\omega=2$ is a significant result. 

\subsection{Comparing the iteration complexity with the synchronous parallel method}\label{sec:lak}
We know from \eqref{eq:ajd} the best iteration complexity $\chi_{s_{opt}}(\tau)$ of the synchronous parallel method is
$$
        {\chi_{s_{opt}}(\tau)= \frac{\frac{1}{\tau}+\left(1-\frac{1}{\tau}\right)\lambda_{\rm max}}{\lambda_{\rm min}^+}.}
$$
Rearranging the above equation we get
\begin{equation}\label{eq:kbd}
     \chi_{s_{opt}}(\tau)= \frac{\lambda_{\rm max}+\frac{1-\lambda_{\rm max}}{\tau}}{\lambda_{\rm min}^+}.
\end{equation}
Therefore,
\begin{equation}\label{eq:nbc}
      \lim_{\tau\to\infty}\chi_{s_{opt}}(\tau)= \frac{\lambda_{\rm max}}{\lambda_{\rm min}^+}.
\end{equation}

\subsubsection{Case 1}
Substituting $\delta_a=c\tau$ in equation \eqref{eq:ncj} we get
\begin{equation}\label{eq:wop}
    \chi_a(\theta_{\omega^\star},\omega^\star,\tau,c)=\frac{\frac{3\lambda_{\min}^++\lambda_{\rm max}}{4}+\frac{\sqrt{1+c\tau(2-k)}+2(c\tau+1+c\tau(1-k)\lambda_{\min}^+)}{2c\tau\sqrt{1+c\tau(2-k)}}}{\lambda_{\min}^+}. \qquad{(k=\lambda_{\min}^+ +\lambda_{\rm max})}.
\end{equation}
Therefore,
\begin{equation}\label{eq:mkl}
    \lim_{\tau\to\infty}\chi_a(\theta_{\omega^\star},\omega^\star,\tau,c)=\frac{\frac{3}{4}\lambda_{\min}^++\frac{\lambda_{\rm max}}{4}}{\lambda_{\min}^+}.
\end{equation}
Thus, comparing equations \eqref{eq:nbc} and \eqref{eq:mkl}, we find out that asymptotically, asynchronous SGD has a tighter iteration complexity than the synchronous parallel method  in Case 1~(when $(\lambda_{\min}^+ +\lambda_{\rm max})\in[1,2]$).

\subsubsection{Case 2}
Similarly for Case 2, from \eqref{eq:opm} we get 
\begin{equation}
    \chi_a(\theta_2,2,\tau,c)= \frac{\frac{1}{4}+\frac{\lambda_{\min}^+}{2}+\frac{1+\sqrt{c\tau+1}}{2c\tau}}{\lambda_{\min}^+}.
\end{equation}
Therefore,
\begin{equation}\label{eq:oip}
     \lim_{\tau\to\infty}\chi_a(\theta_2,2,\tau,c)=\frac{\frac{1}{4}+\frac{\lambda_{\min}^+}{2}}{\lambda_{\min}^+}.
\end{equation}
%Now, since $(0\leq\lambda_{\rm min}^+ +\lambda_{\rm max}\leq1)$, hence $\lambda_{\rm min}^+\leq\frac{1}{2}$. Hence the numerator of equation \eqref{eq:oip}, 
%\begin{equation}
%\frac{1}{4}+\frac{\lambda_{\min}^+}{2}\leq\frac{1}{2}
%\end{equation}
Finally, comparing equations \eqref{eq:kbd} and \eqref{eq:oip}, we find out that asymptotically, asynchronous SGD enjoys a tighter iteration complexity than synchronous parallel method Case 2~(when $(\lambda_{\min}^+ +\lambda_{\rm max})\in[0,1]$ and $\frac{1}{4}+\frac{ \lambda_{\rm min}^+}{2}\leq \lambda_{\rm max}$).
\begin{table}
\centering
\begin{tabular}{|c |c |c |c |c |c |}     %{l l l l l l}
 \hline
$\lambda_{\min}^+$ &    $\lambda_{\max}$  & $ c$   &  $k=\lambda_{\min}^++\lambda_{\max}$   & $\kappa$ &   $\tau$ \\
   \hline

      $10^{-2}$   &  0.4  &    1   &    0.41    &   40   &     12\\
       \hline
      $10^{-2}$   &  0.3  &    1    &   0.31    &    30    &   116\\
       \hline
      $10^{-2}$  &  0.27  &    1    &   0.28   &     27    &  1082\\
       \hline
      $10^{-2}$  &  0.26   &   1    &   0.27    &    26   &   9905\\
       \hline
     $10^{-3}$  &   0.4   &   1   &   0.401    &   400  &      11\\
      \hline
     $10^{-3}$  &   0.3   &   1    &  0.301    &   300  &      95\\
      \hline
     $10^{-3}$  &  0.27  &    1    &  0.271    &   270   &    635\\
      \hline
     $10^{-3}$  &  0.26  &    1    &  0.261   &    260    &  2721\\
      \hline
    $10^{-4}$ &    0.4  &    1   &  0.4001  &    4000   &     11\\
     \hline
    $10^{-4}$  &   0.3   &   1   &  0.3001    &  3000     &   94\\
     \hline
    $10^{-4}$ &   0.27  &    1  &   0.2701   &   2700    &   606\\
     \hline
    $10^{-4}$  &  0.26   &   1  &   0.2601   &   2600    &  2478\\
     \hline
    $10^{-5}$ &   0.4   &   1  &  0.40001  &   40000    &    11\\
      \hline
    $10^{-5}$   &  0.3   &   1   & 0.30001   &  30000     &   93\\
      \hline
   $10^{-5}$  &  0.27  &    1  &  0.27001  &   27000    &   604\\
      \hline
  $10^{-5}$ &  0.26  &    1  &  0.26001 &   26000   &   2456\\
      \specialrule{.2em}{.1em}{.1em} 
 	$10^{-2}$   &  0.4   & 1.5    &   0.41     &   40     &    5     \\
 	\hline
     
      $10^{-2}$   &  0.3  &  1.5    &   0.31   &     30     &   66\\
      \hline
      
      $10^{-2}$   &  0.27   &  1.5   &     0.28    &     27    &    688\\
     \hline

      $10^{-2}$   &  0.26   &  1.5   &     0.27     &    26    &   6504\\
     \hline
	$10^{-3}$  &   0.4  &  1.5  &    0.401   &    400     &    5\\
	\hline
	
	$10^{-3}$  &   0.3  &  1.5    &  0.301    &   300   &     54 \\
	\hline	
	
     $10^{-3}$   &  0.27   &  1.5   &    0.271    &    270    &    398\\
     \hline

     $10^{-3}$   &  0.26   &  1.5    &   0.261      &  260     &  1761\\
     \hline
	
	$10^{-4}$  &   0.4 &   1.5   &  0.4001  &    4000      &   5	\\
	\hline
	
	$10^{-4}$  &   0.3  &  1.5  &   0.3001   &   3000     &   52	\\
	\hline
	
    $10^{-4}$    & 0.27    & 1.5    &  0.2701    &   2700    &    379\\
     \hline

    $10^{-4}$   &  0.26   &  1.5    &  0.2601    &   2600    &   1602\\
     \hline
     
     $10^{-5}$  &   0.4   & 1.5  &  0.40001   &  40000    &     5\\
     \hline
	
	$10^{-5}$  &   0.3   & 1.5   & 0.30001   &  30000  &      52\\
	\hline	
	
     $10^{-5}$    & 0.27    & 1.5   &  0.27001   &   27000    &    377\\
     \hline

     $10^{-5}$    & 0.26    & 1.5    & 0.26001   &   26000     &  1587\\
     \specialrule{.2em}{.1em}{.1em} 
      
      $10^{-2}$  &    0.4    &   2    &    0.41    &     40   &       2\\
      \hline
      $10^{-2}$  &    0.3   &    2   &     0.31    &     30    &     40\\
      \hline
      $10^{-2}$ &    0.27  &     2  &      0.28    &     27  &      491\\
      \hline
      $10^{-2}$   &  0.26 &      2    &    0.27    &     26    &   4803\\
      \hline
     $10^{-3}$  &    0.4   &    2   &    0.401   &     400   &       2\\
     \hline
     $10^{-3}$  &    0.3  &     2   &    0.301   &     300   &      31\\
     \hline
     $10^{-3}$  &   0.27 &      2   &    0.271  &      270   &     278\\
     \hline
     $10^{-3}$  &   0.26  &     2   &    0.261  &      260  &     1281\\
     \hline
    $10^{-4}$  &    0.4  &     2  &    0.4001 &      4000    &      2\\
    \hline
    $10^{-4}$ &     0.3 &      2   &   0.3001  &     3000   &      31\\
    \hline
    $10^{-4}$  &   0.27   &    2  &    0.2701   &    2700   &     265\\
    \hline
    $10^{-4}$ &    0.26  &     2  &    0.2601   &    2600    &   1163\\
    \hline
     $10^{-5}$  &    0.4  &     2  &   0.40001  &     40000   &       2\\
     \hline
   $10^{-5}$  &    0.3  &     2   &  0.30001 &     30000    &     31\\
     \hline
     $10^{-5}$ &   0.27   &    2 &    0.27001  &    27000     &   264\\
     \hline
    $10^{-5}$ &   0.26  &     2  &   0.26001   &   26000    &   1152\\
     \hline
    \end{tabular}
 \caption{$k=\lambda_{\min}^++\lambda_{\max}\le 1$. Minimum number of processors $\tau$ required for asynchronous SGD to have better iteration complexity than synchronous SGD.}\label{table:case2}
     \end{table}

\subsection{Non asymptotic comparison}
As the above comparisons are asymptotic, we want to find out when exactly the asynchronous algorithm is better to use compare to its synchronous counterpart. For given combinations of $\lambda_{\min}^+$, $\lambda_{\max}$ and $c$, we computed the minimum number of processors, $\tau$, for which the asynchronous SGD algorithm has better iteration complexity (equation \eqref{eq:paos}) than its synchronous counterpart (equation \eqref{eq:ajd}). We have considered $c\in\{1,1.5,2\}$, and various values of interest for $\lambda_{\min}^+$ and $\lambda_{\max}$. Denote the condition number, $\kappa = \frac{\lambda_{\rm max}}{\lambda_{\min}^+}$.

\subsubsection{Case 1}
Table \ref{table:case1} refers to the first case, when $k=\lambda_{\min}^++\lambda_{\max}\ge 1$. We notice that the asynchronous SGD performs better than synchronous for small minimum number of processors $\tau$ even for highly ill conditioned problems in Case 1.  

\subsubsection{Case 2}   
Table \ref{table:case2} refers to the second case, when  $k=\lambda_{\min}^++\lambda_{\max}\le 1$.~We note that the synchronous method is always better when $\frac{1}{4}+\frac{ \lambda_{\rm min}^+}{2} > \lambda_{\rm max}$. ~Thus, in order to consider other cases, all the linear systems below satisfy $\frac{1}{4}+\frac{ \lambda_{\rm min}^+}{2} \leq \lambda_{\rm max}$. We vary $\lambda^+_{\min}$ in log-scale, $\lambda_{\max}\in\{0.4,0.3,0.27,0.26\}$, and $c\in\{1,1.5,2\}$.

\begin{itemize}

\item \textbf{Effect of varying $\lambda_{\max}$.} We observe that asynchronous SGD can perform better than the synchronous SGD in a reasonable number of processors, $\tau$, if $\lambda_{\max}$ is reasonably larger than $\frac{1}{4}+\frac{ \lambda_{\rm min}^+}{2}.$ For example, when $\lambda_{\max}=0.4$, the minimum number of processors, $\tau=2$ for $c=2$ and $\tau=11$ for $c=1$ even for highly ill-conditioned problems (for both cases, $\kappa=40000$). Fixing $c=2$, for each value of $\lambda^+_{\min}$, we observe that the minimum number of processors, $\tau$ increases from 2 for $\lambda_{\max}=0.4$ to values in thousands for $\lambda_{\max}=0.26$. 

\item  \textbf{Effect of varying $\lambda_{\min}^+$.} We observe that for the same $\lambda_{\max}$, the smaller the $\lambda_{\min}^+$ is, the smaller the minimum number of processors, $\tau$, is required. In other words, for the same $\lambda_{\max}$, asynchronous SGD requires less minimum number of processors, $\tau$ to beat its synchronous counterpart for more ill-conditioned problems. For example, for $\lambda_{\rm max}=0.27$ and $c=2,$ the minimum number of processors, $\tau=491$ when $\lambda_{{\rm min}^+}=10^{-2}$ (in this case, $\kappa=27$) and $\tau=264$ when $\lambda_{{\rm min}^+}=10^{-5}$ (in this case $\kappa=27000$). 

\item \textbf{Effect of varying $c$.} We also observe that increasing $c$ from $1$ to $2$ significantly brings down the minimum number of processors, $\tau$. This intuitively makes sense and in accordance to our set-up---as $c$ increases, the asynchronous parallel SGD performs more updates in a unit time interval. 
  
  \end{itemize}

\vspace{3pt}
\appendix 
\section{Proofs of the Theorems and Lemmas}\label{sec:proofs}
 \vspace{3pt}

 \paragraph*{\textbf{Proof of Theorem \ref{weak_recursion}}}
 Note that $\mS_{t-\delta}$ (and hence $\mZ_{t-\delta}$) is independent of both $x_{t-\delta}$ and $x_t$. Thus, taking expectation in \eqref{eq:jd8d8bs9jhisoiP} with respect to $\mS_{t-\delta}$ we get
$$\Exp_{\mS_{t-\delta} \sim \cD}\left[x_{t+1}-x_\star\right] = (1-\theta)(x_t-x_\star) + \theta (\mI -\omega \mB^{-1}\Exp_{\mS_{t-\delta}\sim \cD}\left[\mZ_{t-\delta}\right])(x_{t-\delta}-x_\star).
$$
Taking expectation again, and by using the tower property, we get
\begin{equation}\label{eq:s9gh8hf8ijhsiud}
\E{x_{t+1}-x_\star} = (1-\theta) \E{x_t-x_\star} + \theta (\mI-\omega \mB^{-1}\E{\mZ})\E{x_{t-\delta} - x_\star}.
\end{equation}
Pre-multiplying both sides of \eqref{eq:s9gh8hf8ijhsiud} by $\mB^{1/2}$ we find
\begin{align*}
\mB^{1/2}\mathbb{E}[x_{t+1}-x_{\star}]&=(1-\theta)\mB^{1/2}\mathbb{E}[x_{t}-x_{\star}]+\theta(\mI-\omega \mB^{-1/2}\mathbb{E}[\mZ] \mB^{-1/2})\mB^{1/2}\mathbb{E}[x_{t-\delta}-x_{\star}].
\end{align*}
%\peter{Change the notation below in accordance with how I rewrote stuff above. That is, use bold for matrices, lowercase for $x_t$ and so on.}
%Done
Recalling that $\mB^{-1/2}\mathbb{E}[\mZ]\mB^{-1/2}=\mW$ has an eigenvalue decomposition $\mW=\mU\Lambda\mU^\top$ we obtain the desired result. 
\qedwhite

 \paragraph*{\textbf{Proof of Lemma \ref{lem:bs983bv78dv}}}
 The first part follows from \eqref{eq:jd8d8bs9jhisoiP}. The second part follows from the observation that ${\rm Im}(\mB^{-1}\mA^\top)$ is the $\mB$-orthogonal complement of the nullspace of $\mA$.
\qedwhite
 
 \paragraph*{\textbf{Proof of Theorem \ref{recurrence}}}We hereby provide the proof for $\omega\in[0,2]$. The proof for $\omega\geq2$ follows similarly, by using the upper bound of the inequality of Lemma \ref{lem:osohhd9u93}. By taking norms on both sides of \eqref{eq:jd8d8bs9jhisoiP} and then applying Lemma~\ref{eq:09s9hsoiuis907}, we get
\begin{eqnarray*}
\|x_{t+1}-x_{\star}\|^2_\mB &\overset{\eqref{eq:jd8d8bs9jhisoiP}}{=}& (1-\theta)^2 \|x_{t}-x_{\star}\|^2_\mB+\theta^2 \|(\mI-\omega \mB^{-1} \mZ_{t-\delta})(x_{t-\delta}-x_{\star})\|_\mB^2\\ && \qquad + 2(1-\theta)\theta \langle x_{t}-x_{\star},(\mI-\omega \mB^{-1}\mZ_{t-\delta})(x_{t-\delta}-x_{\star}) \rangle_\mB \\
&\overset{\text{Lemma}~\ref{eq:09s9hsoiuis907}}{=}&(1-\theta)^2 \|x_{t}-x_{\star}\|^2_\mB+\theta^2 \|x_{t-\delta}-x_{\star}\|_\mB^2 - 2\omega(2-\omega)\theta^2 f_{\mS_{t-\delta}}(x_{t-\delta})\\&& \qquad +2(1-\theta)\theta \langle x_{t}-x_{\star},(\mI-\omega \mB^{-1}\mZ_{t-\delta})(x_{t-\delta}-x_{\star})\rangle_\mB.    
\end{eqnarray*}
The above identity can be written as
\begin{eqnarray*}
\| r_{t+1}\|^2=(1-\theta)^2 \|r_t\|^2 + \theta^2 \| r_{t-\delta}\|^2 - 2\omega(2-\omega)\theta^2f_{\mS_{t-\delta}}(x_{t-\delta})\\
+2(1-\theta)\theta \langle r_t,(\mI-\omega \mB^{-1/2}\mZ_{t-\delta}\mB^{-1/2})r_{t-\delta}\rangle.    
\end{eqnarray*}
% Notice that $\mZ_{t-\delta}$ is independent of $r_1,\dots, r_t$. 
Conditioning on $x_t,\dots,x_{0}$, the only free random variable is $\mS_{t-\delta}$. Therefore, in view of Lemma~\ref{lem:osohhd9u93} and using the eigenvalue decomposition $\mB^{-1/2}\Exp[\mZ] \mB^{-1/2}= \mU\Lambda \mU^\top$, we  get the following bound on $C \eqdef \Exp\left[\| r_{t+1} \|^2 \;|\; x_t,\dots, x_0\right]$:
\begin{eqnarray*} C & = &  (1-\theta)^2 \| r_t \|^2 +\theta^2 \| r_{t-\delta} \|^2 -2\omega(2-\omega)\theta^2 f(x_{t-\delta})   +2(1-\theta)\theta r_t^\top (\mI - \omega \mB^{-1/2} \Exp[\mZ] \mB^{-1/2}) r_{t-\delta}\\
& \overset{\text{Lemma}~\ref{lem:osohhd9u93}}{\leq} & (1-\theta)^2 \| r_t \|^2 + \theta^2(1-\omega(2-\omega)\lambda_{\min}^+) \| r_{t-\delta} \|^2  + 2(1-\theta)\theta \underbrace{r_t^\top (\mI-\omega \mU\Lambda \mU^\top)r_{t-\delta}}_{D}.
\end{eqnarray*}
 Further, we have 
\begin{eqnarray*}
D &=&  r_t^\top (\mI-\omega \mU\Lambda \mU^\top)r_{t-\delta}\\
& = &  (\mU^\top r_t)^\top (\mI-\omega \Lambda )\mU^\top r_{t-\delta}
\\
&=& \sum_i (1-\omega \lambda_i )u_i^\top r_tu_i^{\top}r_{t-\delta}\\
&\overset{\text{Lemmas}~\ref{lem:hbs6763vs6} \text{ and } \ref{lem:bs983bv78dv}}{=}& \sum_{i:\lambda_i \neq 0} (1-\omega \lambda_i )u_i^{\top}r_t u_i^{\top}r_{t-\delta}\\
&\leq &  \sum_{i:\lambda_i \neq 0} |1-\omega \lambda_i | | u_i^{\top}r_t u_i^{\top}r_{t-\delta}| \\
&\leq &  \alpha(\omega) \sum_{i:\lambda_i \neq 0}  | u_i^{\top}r_t u_i^{\top}r_{t-\delta}| \\
& \overset{\text{(Cauchy-Schwarz)}}{\leq} & \alpha(\omega) \|r_t \| \|r_{t-\delta}\| \\
&\overset{\text{(AM-GM)}}\leq& \frac{\alpha(\omega) }{2} \left(\|r_t\|^2+\|r_{t-\delta}\|^2\right).
\end{eqnarray*}
Combining the bounds on $C$ and $D$, we get
\begin{eqnarray}\label{recursion_strong1}
C & \leq & (1-\theta)^2 \| r_t \|^2 + \theta^2(1-\omega(2-\omega)\lambda_{\min}^+) \| r_{t-\delta} \|^2  + (1-\theta)\theta \alpha(\omega)  \left(\|r_t\|^2+\|r_{t-\delta}\|^2\right) \notag \\
&=& \left[(1-\theta)^2+(1-\theta)\theta\alpha(\omega)\right] \|r_t\|^2\nonumber\\
&&+ \left[\theta^2 (1-\omega(2-\omega)\lambda_{\min}^+) + (1-\theta)\theta\alpha(\omega) \right] \|r_{t-\delta}\|^2. \label{inalpha}
\end{eqnarray}
The final result is obtained after we take full expectation and apply tower property.
\qedwhite

\paragraph*{\textbf{Proof of Lemma \ref{lem:jsh}}}
We have
\begin{equation}
    p_{\delta_a}(\gamma)- p_{\delta}(\gamma)=\gamma^\delta\left(\gamma-K_1\right)(\gamma^{\delta_a-\delta}-1)\leq0 \quad \forall\text{ } \gamma\in[K_1,1] \text{ and }  \delta\in[0,\delta_a].
\end{equation}
Thus, $p_{\delta_a}(\gamma)$ bounds $p_{\delta}(\gamma)$ from below in $\gamma\in[K_1,1]$, and hence its positive root $\varrho_{A}(\theta,\omega,\delta_a)$ is greater than or equal $\varrho_{A}(\theta,\omega,\delta)$.
\qedwhite

\paragraph*{\textbf{Proof of Lemma \ref{lem:roc}}}
 The inequality $ \rho_a(\theta,\omega)\leq\prod_{i=1}^{\delta_a}\varrho(\theta,\omega,\delta_i)$ follows from Theorem \ref{conv_strong}, and $\prod_{i=1}^{\delta_a}\varrho(\theta,\omega,\delta_i)\leq\varrho(\theta,\omega,\delta_a)^{\delta_a}$ follows from Lemma \ref{lem:jsh}. Together, we have \eqref{rate_of_convergence}. 
\qedwhite

\paragraph*{\textbf{Proof of Lemma \ref{lem:kjd}}}
We have$$p_{\delta}(0)\geq g_{\delta}(0)\;\;{\rm and}\;\;p_{\delta}(1)=g_{\delta}(1).$$
Let $q_{\delta}(\gamma)=p_{\delta}(\gamma)-g_{\delta}(\gamma)$ then 
$q_{\delta}(\gamma)=\gamma^{\delta+1}  - \left(1+\frac{1}{\delta}\right)\gamma^\delta +\frac{1}{\delta},
$
and $q_{\delta}'(\gamma)=\left(\delta+1\right)(\gamma-1)\gamma^{\delta-1}\leq 0$ for all $\gamma \in [0,1].$
Therefore, for all $\gamma \in [0,1],$ we have 
$
q_{\delta}(\gamma)\geq q_{\delta}(1),
$
which implies $
q_{\delta}(\gamma)\geq 0$, and finally,
$
 p_{\delta}(\gamma)-g_{\delta}(\gamma) \geq 0$ for all $\gamma \in [0,1]. 
$
Therefore, the root $u(\theta,\omega,\delta)= \left(\frac{K_2+\frac{1}{\delta}}{1-K_1+\frac{1}{\delta}}\right)^{\scalebox{1.2}{$\frac {1}{\delta}$}}$ of $g_{\delta}(\gamma)$
 is an upper bound to the unique positive root  of the characteristic polynomial $p_{\delta}(\gamma)$.
\qedwhite

%
%\begin{comment}
%\paragraph*{\textbf{Proof of Lemma 8}}
%Following similar reason as before we have
% $$ p_{\delta}(0)= h_{\delta}(0)\;\;{\rm and}\;\;p_{\delta}(1)= h_{\delta}(1).$$
%Also, $ p_{\delta}(\gamma)-h_{\delta}(\gamma) = \gamma^{\delta+1}-\gamma^\delta\leq 0$ for all $\gamma \in [0,1]$. 
%Hence the root $l(\theta,\omega,\delta)=\left(\frac{K_2}{1-K_1}\right)^{\frac {1}{\delta}}$ of $h_{\delta}(\gamma)$,
%  is a lower bound to the unique positive root  of the characteristic polynomial $p_{\delta}(\gamma)$.
%\qedwhite
%\end{comment}

\paragraph*{\textbf{Proof of Lemma \ref{rem:oqx}}}
We have for all $\theta$
\begin{align*}
    K_1(\theta,\omega_1) &\ \leq K_1(\theta,\omega_2)\\ \text{and}\quad
     K_2(\theta,\omega_1) &\ \leq K_2(\theta,\omega_2).
\end{align*}
Then by equation (\ref{rec3}), we have $\rho_{a}(\theta,\omega_1)\leq\rho_{a}(\theta,\omega_2) $ for all $\theta\in[0,1].$
Similarly, we see that by the definition of $U(\theta,\omega)$ in \eqref{defU} (provided $K_1+K_2<1$ in both choices of parameters)
$U(\theta,\omega_1)\leq U(\theta,\omega_2)$ for all $\theta\in[0,1].$
Hence the result. 
%In other words, if for all $\theta$, both $K_1$ and $K_2$ are smaller for $\omega_1$, then $\omega_1$ is a better choice of $\omega$ than $\omega_2$.
\qedwhite

\noindent\paragraph*{\textbf{Proof of Lemma \ref{lem:sjk}}}
Now we show how $K_1$ and $K_2$ behave for both the cases in the range $\omega\in[0,1]$ and $\omega\in[\omega^\star,\infty)$. Define $s(\omega)\eqdef(1-\omega(2-\omega)\lambda_{\min}^+)$ and $t(\omega)\eqdef(1-\omega(2-\omega)\lambda_{\max})$.

\textbf{Case 1:}
\begin{enumerate}[label=(\roman*)]
\item For $0\leq\omega\leq1$ and $\alpha(\omega)=1-\omega\lambda^+_{\min}$:\newline

\begin{tikzpicture}
    \draw (0,0) -- (4,0);
    \fill[black] (0,0) circle (0.6 mm) node[below] {$0$};
    \fill[black] (4,0) circle (0.6 mm) node[below] {$2$};
     \fill[black] (2,0) circle (0.6 mm) node[below] {$1$};
    \fill[black] (3,0) circle (0.6 mm) node[below] {$\omega^\star$};
    \fill[black] (1,0) circle (0.6 mm) node[below] {$\omega$};
\end{tikzpicture}

\begin{equation}
K_1(\theta,\omega) \eqdef  (1-\theta)(1-\theta + \theta\alpha(\omega)), \; K_2(\theta,\omega) \eqdef \theta (\theta \underbrace{(1-\omega(2-\omega)\lambda_{\min}^+)}_{s(\omega)} + (1-\theta)\alpha(\omega) ).
\end{equation} 
We see that both $\alpha(\omega)$ and $s(\omega)$ are monotonically decreasing in the interval $\omega\in[0,1]$. Hence, both $K_1$ and $K_2$ are monotonically decreasing in $\omega\in[0,1]$.
\item  For $\omega^\star\leq\omega\leq 2$ and $\alpha(\omega)=\omega\lambda_{\max}-1$:\newline

\begin{tikzpicture}
    \draw (0,0) -- (4,0);
    \fill[black] (0,0) circle (0.6 mm) node[below] {$0$};
    \fill[black] (4,0) circle (0.6 mm) node[below] {$2$};
     \fill[black] (2,0) circle (0.6 mm) node[below] {$1$};
    \fill[black] (2.5,0) circle (0.6 mm) node[below] {$\omega^\star$};
    \fill[black] (3,0) circle (0.6 mm) node[below] {$\omega$};
\end{tikzpicture}

\[K_1(\theta,\omega) \eqdef  (1-\theta)(1-\theta + \theta\alpha(\omega)), \; K_2(\theta,\omega) \eqdef \theta (\theta \underbrace{(1-\omega(2-\omega)\lambda_{\min}^+)}_{s(\omega)} + (1-\theta)\alpha(\omega) ).\]

Again, both $\alpha(\omega)$ and $s(\omega)$ are monotonically increasing in the interval $\omega\in[\omega^\star,2]$. Hence, both $K_1$ and $K_2$ are monotonically increasing in $\omega\in[\omega^\star,2]$.

\item For $\omega\geq 2$ and $\alpha(\omega)=\omega\lambda_{\max}-1$:\newline

\begin{tikzpicture}
   \draw (0,0) -- (4,0);
    \fill[black] (0,0) circle (0.6 mm) node[below] {$0$};
    \fill[black] (3,0) circle (0.6 mm) node[below] {$2$};
     \fill[black] (1.5,0) circle (0.6 mm) node[below] {$1$};
    \fill[black] (2.5,0) circle (0.6 mm) node[below] {$\omega^\star$};
    \fill[black] (4,0) circle (0.6 mm) node[below] {$\omega$};
\end{tikzpicture}

\[K_1(\theta,\omega) \eqdef  (1-\theta)(1-\theta + \theta\alpha(\omega)), \; K_2(\theta,\omega) \eqdef \theta (\theta \underbrace{(1-\omega(2-\omega)\lambda_{\max})}_{t(\omega)} + (1-\theta)\alpha(\omega) ).\]

Again, both $\alpha(\omega)$ and $t(\omega)$ are monotonically increasing in the interval $\omega\in[2,\infty)$. Hence, both $K_1$ and $K_2$ are monotonically increasing in $\omega\in[2,\infty)$.
\end{enumerate}

\textbf{Case 2:}

\begin{enumerate}[label=(\roman*)]
\item \hspace{1em} For $0\leq\omega\leq1$ and $\alpha(\omega)=1-\omega\lambda^+_{\min}$:\newline

\begin{tikzpicture}
    \draw (0,0) -- (4,0);
    \fill[black] (0,0) circle (0.6 mm) node[below] {$0$};
    \fill[black] (3,0) circle (0.6 mm) node[below] {$2$};
    \fill[black] (4,0) circle (0.6 mm) node[below] {$\omega^\star$};
    \fill[black] (2,0) circle (0.6 mm) node[below] {$1$};
    \fill[black] (1,0) circle (0.6 mm) node[below] {$\omega$};
\end{tikzpicture}

\begin{equation}
K_1(\theta,\omega) \eqdef  (1-\theta)(1-\theta + \theta\alpha(\omega)), \; K_2(\theta,\omega) \eqdef \theta (\theta \underbrace{(1-\omega(2-\omega)\lambda_{\min}^+)}_{s(\omega)} + (1-\theta)\alpha(\omega) ).
\end{equation}

Similar to case 1, both $\alpha(\omega)$ and $s(\omega)$ are monotonically decreasing in the interval $\omega\in[0,1]$. Hence, both $K_1$ and $K_2$ are monotonically decreasing in $\omega\in[0,1]$.

\item For $\omega\geq\omega^\star$ and $\alpha(\omega)=\omega\lambda_{\max}-1$:\newline

\begin{tikzpicture}
     \draw (0,0) -- (4,0);
    \fill[black] (0,0) circle (0.6 mm) node[below] {$0$};
    \fill[black] (2,0) circle (0.6 mm) node[below] {$2$};
    \fill[black] (3,0) circle (0.6 mm) node[below] {$\omega^\star$};
    \fill[black] (4,0) circle (0.6 mm) node[below] {$\omega$};
\end{tikzpicture}

$$K_1(\theta,\omega) \eqdef  (1-\theta)(1-\theta + \theta\alpha(\omega)), \; K_2(\theta,\omega) \eqdef \theta (\theta \underbrace{(1-\omega(2-\omega)\lambda_{\max})}_{t(\omega)} + (1-\theta)\alpha(\omega) ).$$
Again, both $\alpha(\omega)$ and $t(\omega)$ are monotonically increasing in the interval $\omega\in[\omega^\star,\infty)$. Hence, both $K_1$ and $K_2$ are monotonically increasing in $\omega\in[\omega^\star,\infty)$.
\end{enumerate}
Now, for both the cases, for all $\theta\in [0,1]$ we have the following:
\begin{itemize}
  \item [$\omega\leq1$ :] Both $K_1$ and $K_2$ monotonically decrease till $\omega=1$, then from Lemma \ref{rem:oqx}, $1$ is the optimal $\omega$ in $[0,1]$.~(Note that $\omega^\star$ is always greater than or equal to $1$) 
  \item [$\omega\geq\omega^\star$ :] Both $K_1$ and $K_2$ monotonically increase after $\omega=\omega^\star$, then from remark \ref{rem:oqx}, $\omega^\star$ is the optimal $\omega$ in $[\omega^\star,\infty)$.
\end{itemize}
Thus, the optimal $\omega$ for all $\theta\in[0,1]$  in both the cases lies in the range $[1,\omega_\star]$.
\qedwhite

\paragraph*{\textbf{Proof of Lemma \ref{lem:ysv}}}
The first inequality follows from the fact that $\lambda_{\min}^+ \leq \frac{1}{2}$ for Case 2 (as $\lambda_{\min}+\lambda_{\max}\leq1$). The second follows from the fact that $U(1,\omega)$ is always greater than $\frac{1}{\lambda_{\min}^+}$ for all $\delta_a$, whereas $U(\theta_1,1)$ is smaller than $\frac{1}{\lambda_{\min}^+}$ for $\delta_a\geq4$. 
\qedwhite

\bibliographystyle{plain}
\bibliography{references_asyn}

\begin{thebibliography}{10}

\bibitem{feyzmahdavian}
A.~Aytekin, H.~R. Feyzmahdavian, and M.~Johansson.
\newblock Analysis and implementation of an asynchronous optimization algorithm
  for the parameter server.
\newblock {\em arXiv:1610.05507}, 2016.

\bibitem{ben2019demystifying}
T.~Ben-Nun and T.~Hoefler.
\newblock Demystifying parallel and distributed deep learning: An in-depth
  concurrency analysis.
\newblock {\em ACM Computing Surveys (CSUR)}, 52(4):65, 2019.

\bibitem{bertsekas2003parallel}
D.~P Bertsekas and J.~N Tsitsiklis.
\newblock Parallel and distributed computation: numerical methods.
\newblock 2003.

\bibitem{Bradley2011ParallelCD}
J.~K. Bradley, A.~Kyrola, D.~Bickson, and C.~Guestrin.
\newblock Parallel coordinate descent for l1-regularized loss minimization.
\newblock In {\em International Conference on Machine Learning}, 2011.

\bibitem{chaturapruek2015asynchronous}
S.~Chaturapruek, J.~C. Duchi, and C.~R{\'e}.
\newblock Asynchronous stochastic convex optimization: the noise is in the
  noise and sgd don't care.
\newblock In {\em Advances in Neural Information Processing Systems}, pages
  1531--1539, 2015.

\bibitem{chazan}
D.~Chazan and W.~Miranker.
\newblock Chaotic relaxation.
\newblock {\em Linear Algebra and its Applications}, 2(2):199--222, 1969.

\bibitem{de2017understanding}
C.~M. De~Sa, M.~Feldman, C.~R{\'e}, and K.~Olukotun.
\newblock Understanding and optimizing asynchronous low-precision stochastic
  gradient descent.
\newblock In {\em ACM SIGARCH Computer Architecture News}, volume~45, pages
  561--574. ACM, 2017.

\bibitem{de2015taming}
C.~M. De~Sa, C.~Zhang, K.~Olukotun, and C.~R{\'e}.
\newblock {Taming the wild: A unified analysis of Hogwild-style algorithms}.
\newblock In {\em Advances in neural information processing systems}, pages
  2674--2682, 2015.

\bibitem{Dekel}
O.~Dekel, R.~Gilad-Bachrach, O.~Shamir, and L.~Xiao.
\newblock Optimal distributed online prediction using mini-batches.
\newblock {\em Journal of Machine Learning Research}, 13(1):165--202, 2012.

\bibitem{layer-wise}
A.~Dutta, E.~Bergou, A.~M. Abdelmoniem, C.~Y. Ho, A.~N. Sahu, M.~Canini, and
  P.~Kalnis.
\newblock On the discrepancy between the theoretical analysis and practical
  implementations of compressed communication for distributed deep learning.
\newblock In {\em Proceedings of AAAI}, 2020.

\bibitem{fercoq2014fast}
O.~Fercoq, Z.~Qu, P.~Richt{\'a}rik, and M.~Tak{\'a}{\v{c}}.
\newblock Fast distributed coordinate descent for non-strongly convex losses.
\newblock In {\em IEEE International Workshop on Machine Learning for Signal
  Processing}, pages 1--6. IEEE, 2014.

\bibitem{fercoq2015accelerated}
O.~Fercoq and P.~Richt{\'a}rik.
\newblock Accelerated, parallel, and proximal coordinate descent.
\newblock {\em SIAM Journal on Optimization}, 25(4):1997--2023, 2015.

\bibitem{feyzmahdavian2014delayed}
H.~R. Feyzmahdavian, A.~Aytekin, and M.~Johansson.
\newblock A delayed proximal gradient method with linear convergence rate.
\newblock In {\em IEEE International Workshop on Machine Learning for Signal
  Processing (MLSP)}, pages 1--6, 2014.

\bibitem{frommer}
A.~Frommer and D.~B. Szyld.
\newblock On asynchronous iterations.
\newblock {\em Journal of Computational and Applied Mathematics}, 123:201--216,
  2001.

\bibitem{Gimpel}
K.~Gimpel, D.~Das, and N.~A. Smith.
\newblock Distributed asynchronous online learning for natural language
  processing.
\newblock In {\em Conference on Computational Natural Language Learning}, pages
  213--222, 2010.

\bibitem{gower2015randomized}
R.~M. Gower and P.~Richt{\'a}rik.
\newblock Randomized iterative methods for linear systems.
\newblock {\em SIAM Journal on Matrix Analysis and Applications},
  36(4):1660--1690, 2015.

\bibitem{gower2015stochastic}
R.~M. Gower and P.~Richt{\'a}rik.
\newblock Stochastic dual ascent for solving linear systems.
\newblock {\em arXiv preprint arXiv:1512.06890}, 2015.

\bibitem{NIPS2014_5599}
M.~Jaggi, V.~Smith, M.~Tak{\'a}{\v{c}}, J.~Terhorst, S.~Krishnan, T.~Hofmann,
  and M.~Jordan.
\newblock Communication-efficient distributed dual coordinate ascent.
\newblock In {\em Advances in Neural Information Processing Systems}, pages
  3068--3076. 2014.

\bibitem{johnson_zhang}
R.~Johnson and T.~Zhang.
\newblock Accelerating stochastic gradient descent using predictive variance
  reduction.
\newblock In {\em Advances in Neural Information Processing Systems}, pages
  315--323, 2013.

\bibitem{asaga}
R.~Leblond, F.~Pedregosa, and S.~Lacoste-Julien.
\newblock {ASAGA: Asynchronous Parallel SAGA}.
\newblock In {\em International Conference on Artificial Intelligence and
  Statistics}, pages 46--54, 2017.

\bibitem{lee-svrg}
J.~D. Lee, Q~Lin, T.~Ma, and T.~Yang.
\newblock Distributed stochastic variance reduced gradient methods and a lower
  bound for communication complexity.
\newblock {\em arXiv:1507.07595}, 2015.

\bibitem{Lian}
X.~Lian, Y.~Huang, Y.~Li, and J.~Liu.
\newblock Asynchronous parallel stochastic gradient for nonconvex optimization.
\newblock In {\em In Advances in Neural Information Processing Systems}, pages
  2737--2745, 2015.

\bibitem{takac_icml}
C.~Ma, V.~Smith, M.~Jaggi, M.Jordan, P.~Richt\'{a}rik, and M.~Tak{\'a}{\v{c}}.
\newblock Adding vs. averaging in distributed primal dual optimization.
\newblock In {\em International Conference on Machine Learning}, pages
  1973--1982, 2015.

\bibitem{mania2017perturbed}
H.~Mania, X.~Pan, D.~Papailiopoulos, B.~Recht, K.~Ramchandran, and M.~Jordan.
\newblock Perturbed iterate analysis for asynchronous stochastic optimization.
\newblock {\em SIAM Journal on Optimization}, 27(4):2202--2229, 2017.

\bibitem{marevcek2015distributed}
J.~Mare{\v{c}}ek, P.~Richt{\'a}rik, and M.~Tak{\'a}{\v{c}}.
\newblock Distributed block coordinate descent for minimizing partially
  separable functions.
\newblock In {\em Numerical Analysis and Optimization}, pages 261--288.
  Springer, 2015.

\bibitem{PF}
C.~D. Meyer.
\newblock Matrix analysis and applied linear algebra, 2000.
\newblock SIAM.

\bibitem{necoara2013efficient}
I.~Necoara and D.~Clipici.
\newblock {Efficient parallel coordinate descent algorithm for convex
  optimization problems with separable constraints: Application to distributed
  MPC}.
\newblock {\em Journal of Process Control}, 23(3):243--253, 2013.

\bibitem{nguyen2018sgd}
L.~Nguyen, P.~H. Nguyen, M.~Dijk, P.~Richt{\'a}rik, K.~Scheinberg, and
  M.~Tak{\'a}{\v{c}}.
\newblock {SGD and Hogwild! convergence without the bounded gradients
  Assumption}.
\newblock In {\em International Conference on Machine Learning}, pages
  3747--3755, 2018.

\bibitem{noel2014dogwild}
C.~Noel and S.~Osindero.
\newblock {Dogwild!-distributed hogwild for CPU \& GPU}.
\newblock In {\em Proceedings of the NIPS Workshop on Distributed Machine
  Learning and Matrix Computations}, pages 693--701, 2014.

\bibitem{pasaga}
F.~Pedregosa, R.~Leblond, F.~Pedregosa, and S.~Lacoste-Julien.
\newblock Breaking the nonsmooth barrier: A scalable parallel method for
  composite optimization.
\newblock In {\em Advances in Neural Information Processing System}, pages
  56--65, 2017.

\bibitem{polynomials}
V.~V. Prasolov.
\newblock Polynomials, 2009.
\newblock Springer.

\bibitem{recht2011hogwild}
B.~Recht, C.~Re, S.~Wright, and F.~Niu.
\newblock Hogwild: A lock-free approach to parallelizing stochastic gradient
  descent.
\newblock In {\em Advances in Neural Information Processing Systems}, pages
  693--701, 2011.

\bibitem{richtarik2016distributed}
P.~Richt{\'a}rik and M.~Tak{\'a}{\v{c}}.
\newblock Distributed coordinate descent method for learning with big data.
\newblock {\em The Journal of Machine Learning Research}, 17(1):2657--2681,
  2016.

\bibitem{Richtarik2016}
P.~Richt{\'a}rik and M.~Tak{\'a}{\v{c}}.
\newblock Parallel coordinate descent methods for big data optimization.
\newblock {\em Mathematical Programming}, 156(1):433--484, Mar 2016.

\bibitem{richtarik2017stochastic}
P.~Richt{\'a}rik and M.~Tak{\'a}{\v{c}}.
\newblock Stochastic reformulations of linear systems: Algorithms and
  convergence theory.
\newblock {\em arXiv preprint arXiv:1706.01108}, 2017.

\bibitem{robins_monro}
H.~Robbins and S.~Monro.
\newblock A stochastic approximation method.
\newblock {\em Annals of Mathematical Statistics}, 22:400--407, 1951.

\bibitem{Shalev-Shwartz_pegasos}
S.~Shalev-Shwartz, Y.~Singer, and N.~Srebro.
\newblock Pegasos: Primal estimated sub-gradient solver for \uppercase{SVM}.
\newblock In {\em In Proceedings of the 24th International Conference on
  Machine Learning (ICML)}, pages 807--814, 2007.

\bibitem{Shalev-Shwartz_SDCA}
S.~Shalev-Shwartz and T.~Zhang.
\newblock Accelerated mini-batch stochastic dual coordinate ascent.
\newblock In {\em Advances in Neural Information Processing Systems}, pages
  378--385, 2013.

\bibitem{DANE}
O.~Shamir, N.~Srebro, and T.~Zhang.
\newblock Communication-efficient distributed optimization using an approximate
  newton-type method.
\newblock In {\em International conference on machine learning}, pages
  1000--1008, 2014.

\bibitem{takac2013mini}
M.~Tak{\'a}c, A.~Singh Bijral, P.~Richt{\'a}rik, and N.~Srebro.
\newblock Mini-batch primal and dual methods for {SVM}s.
\newblock In {\em Advances in Neural Information Processing Systems}, pages
  1022--1030, 2013.

\bibitem{framework}
H.~Xu, C.-Y. Ho, A.~M. Abdelmoniem, A.~Dutta, E.~H. Bergou, K.~Karatsenidis,
  M.~Canini, and P.~Kalnis.
\newblock Compressed communication for distributed deep learning: Survey and
  quantitative evaluation.
\newblock 2020.

\bibitem{NIPS2013_5114}
T.~Yang.
\newblock Trading computation for communication: Distributed stochastic dual
  coordinate ascent.
\newblock In {\em Advances in Neural Information Processing Systems}, pages
  629--637. 2013.

\bibitem{li_svrg}
S.~Y. Zhao and W.~J. Li.
\newblock Fast asynchronous parallel stochastic gradient descent: A lock-free
  approach with convergence guarantee.
\newblock In {\em AAAI Conference on Artificial Intelligence}, pages
  2379--2385, 2016.

\bibitem{Zheng}
S.~Zheng, Q.~Meng, T.~Wang, W.~Chen, N.~Yu, Z.-M. Ma, and T.-Y. Liu.
\newblock Asynchronous stochastic gradient descent with delay compensation.
\newblock In {\em International Conference on Machine Learning}, pages
  4120--4129, 2017.

\bibitem{parallel_SGD}
M.~Zinkevich, M.~Weimer, L.~Li, and A.~J. Smola.
\newblock Parallelized stochastic gradient descent.
\newblock In {\em Advances in Neural Information Processing Systems}, pages
  2595--2603. 2010.

\end{thebibliography}

\section{Notation Glossary}\label{sec:notation}
\begin{tabular}{ |p{3cm}|p{11cm}|p{2cm} | }
 \hline
 \multicolumn{3}{|c|}{\textbf{The Basics}} \\
\hline
 $\mA,b$   & $m \times n$ matrix and $m \times 1$ vector defining the system $\mA x=b$    & Related to \\
 $\cL$ &   $\{x\; : \mA x=b\}$ (solution set of the linear system)  &  stochastic reformulation  \\
 $\mB$ & $n \times n$ symmetric positive definite matrix & of the linear  \\
 $\langle x , y \rangle_{\mB}$    & $x^\top \mB y$ ($\mB$-inner product)& system~(see \eqref{linear_system})\\
 $\|x\|_{\mB}$ &   $\sqrt{\langle x , x \rangle_{\mB}}$~(Norm induced by $\mB$-inner product) & \\
 $\mM^{\dagger}$ & Moore-Penrose pseudoinverse of matrix $\mM$  & 
 \\
 $\mS$ & a random real matrix with $m$ rows  & \\
 $\cD$ & distribution from which matrix $\mS$ is drawn & \\
 $\mH$ & $\mH=\mS(\mS{^\top}\mA\mB^{-1}\mA^{\top}\mS)^{\dagger}\mS^{\top}$ & \\
 $\mZ$ & $\mA^\top\mH\mA$ & \\
 Im$(\mM)$ & Image (range) space of matrix $\mM$ & \\
 Null$(\mM)$ & Null space of matrix $\mM$ & \\
 $\E{\cdot}$ & expectation& \\
 
 \hline
 \multicolumn{3}{|c|}{\textbf{Projections}}\\
 \hline
  $\Pi^{\mB}_{\cL}(x)$   & projection of $x$ onto the set $\cL$ in the $\mB$-norm   & \\
  $\mB^{-1}\mZ$ & projection matrix in the $\mB$-norm  onto ${\rm Im}(\mB^{-1}\mA^\top\mS)$ & \\
     
 \hline
 \multicolumn{3}{|c|}{\textbf{Optimization}}\\
 \hline
% $\Psi$   & set of minimizers of $f$   & \\
 $x_\star$ & a solution of the linear system $\mA x=b$ & Assumption \ref{assump1}\\
 $f_{\mS}, \nabla f_{\mS}, \nabla^2 f_{\mS}$  & stochastic function, its gradient and Hessian, respectively & \\
 $\cL_{\mS}$ & $\{x : \mS^\top \mA x = \mS^\top b\}$ (set of minimizers of $f_{\mS}$) & \\
 $f$ & $\E{f_{\mS}}$ & \\
 $\nabla f$ & gradient of f with respect to the \mB-inner product & \\
 $\nabla^2 f$ & $\mB^{-1}\E{\mZ}$ (Hessian of $f$ in the $\mB$-inner product) & \\
 \hline
 \multicolumn{3}{|c|}{\textbf{Eigenvalues}}\\
 \hline
 $\mW$ & $\mB^{-\frac{1}{2}}\E{\mZ}\mB^{-\frac{1}{2}}$ (psd matrix with the same spectrum as $\nabla^2f$) & \\
 $\lambda_1,\hdots,\lambda_n$ & eigenvalues of $\mW$& \\
 $\Lambda$ & \textbf{Diag}$(\lambda_1,\hdots,\lambda_n)$ (diagonal matrix of eigenvalues) & \\
 $\mU$ & $[u_1,\hdots, u_n]$ (eigenvectors of $\mW$) & \\
 $\mU\Lambda\mU^\top$ & eigenvalue decomposition of $\mW$ & \\
 $\lambda_{\max}, \lambda^+_{\min}$ & largest and smallest nonzero eigenvalues of $\mW$ & \\
 
 \hline
 \multicolumn{3}{|c|}{\textbf{Algorithms}}\\
 \hline
 $\theta$ & damping parameter & \\
 $\omega$ & stepsize / relaxation parameter & \\
 $\omega^\star$ & $\ddfrac{2}{(\lambda^+_{\min}+\lambda_{\max})}$ & Theorem   \ref{recurrence}\\
 $\tau$   & number of processors in the assembly   & \\
 $\delta$ & delay between the master and a particular worker processor & \\
 $\delta_a (=c\tau) $ & number of updates by the asynchronous parallel assembly in a unit time interval a.k.a the delay for the slowest worker processor (assumed to be constant) & \\
 
 & & \\
 & \textit{\rm For particular values of }$\theta$ , $\omega$ \textit{and} $\delta_a$: & \\
 
 $\alpha(\omega)$ & $\max_{i:\lambda_i>0}|1-\omega\lambda_i|$ & Theorem \ref{recurrence}\\
$K_1(\theta,\omega)$, $K_2(\theta,\omega)$ & coefficient of $\E{\|r_t\|^2]}$ and $\E{\|r_{t-\delta}\|^2}$ respectively in Theorem \ref{recurrence} & Theorem \ref{recurrence}\\
$\rho_a(\theta,\omega)$ & rate of convergence of the asynchronous parallel SGD in a unit time interval & \\
$\chi_a(\theta,\omega,\delta_a)$ & iteration complexity of the asynchrous parallel SGD & \\
\hline
\end{tabular}

\end{document}